\documentclass[11pt]{article} 
\usepackage[top=1in, bottom=1in, left=1in, right=1in]{geometry} 
\usepackage{adjustbox}

\input{articleStyle.sty}
\graphicspath{ {./images/} } 

\title{A Nonconvex Framework for Joint Sparse Sample Weighting and Parameter Estimation}

\author[1]{Hassan Nojavan}
\author[2]{Chul Moon}
\author[1]{Miju Ahn \thanks{Emails: {hnojavan@smu.edu}, {chulm@smu.edu} and {mijua@smu.edu}}}

\affil[1]{Department of Operations Research and Engineering Management, Southern Methodist University}
\affil[2]{Department of Statistics and Data Science, Southern Methodist University}

\date{First submission: October 12, 2026}

\begin{document}

\maketitle
\begin{abstract}
We propose a joint sparse sample weighting and parameter estimation (SWAPE) framework that jointly optimizes model parameters and sample weights to train a model while simultaneously assigning an individual weight to each training data point. Our framework accommodates several widely used loss functions for the model parameters, including convex, weakly convex, and difference-of-convex loss, and weakly convex sparsity functions to control the complexity of the sample weights. To solve the resulting bivariate problem, we propose an alternating minimization method in which the subproblems for updating the model parameters and weights are each approximated using majorize-minimize techniques. We provide several approximation methods for the loss functions considered in this paper, and show the overall algorithm's convergence. Numerical experiments on various learning tasks and data anomaly settings demonstrate the effectiveness of the proposed framework in minimizing the impact of undesirable data points compared to benchmark methods. 
\end{abstract}

\noindent\textbf{Keywords:} difference-of-convex programs, alternate search method, directional stationary points, sparsity 

\medskip

\section{Introduction}

Judicious sample selection is an important task in machine learning. It is well accepted that the quality of the training data directly affects the predictive model; however, many standard algorithms assume a clean dataset, whereas in practice observations often contain noise, outliers, and other anomalies. Therefore, training predictive models using strategically selected samples has emerged as a critical methodology for building robust models. 

Recent results have shown that selective use of data points can boost model performance and mitigate the unwanted effects of noisy data, supporting the development of reliable and efficient tools. A conventional data selection method is sampling, e.g., simple sampling, which randomly draws data points one at a time until a desired sample size, or a desired ratio among predetermined data groups is obtained, e.g., under-, over-, and stratified sampling. Alternatively, algorithm-based approaches iteratively assign weights to data points \cite{friedman2001greedy, friedman2000additive, ren2018learning}, measure data values \cite{pmlr-v97-ghorbani19c}, or select a subset of data points so that the model emphasizes accuracy on the most significant samples \cite{rousseeuw2006computing}. Formulated as a mathematical program, sample selection has also been modeled as an optimization problem, for example, by introducing a binary variable \cite{NEURIPS2021_07563a3f}, to indicate whether each data point should be included in the training criteria.

This paper proposes an optimization framework for sample selection that exploits mature variable selection methods \cite{fan2001variable, tibshirani96lasso, zhang2010nearly}. Instead of driving the coefficients of insignificant variables to zero as in variable selection, our formulation employs sparsity regularizers in the training problem to control the complexity of the training data used by the model. We assign a weight variable to the loss incurred by each sample and solve for the weight, 
allowing the model to choose which data points to use during training. The resulting problem can be viewed as a widely studied weighted empirical risk minimization problem, except that our method simultaneously optimizes both the data weights and the model parameters. The introduction of weights as variables, in addition to the model coefficients, makes solving the resulting bivariate optimization problem significantly more challenging. For instance, our formulation can be nonconvex even with a convex loss term, limiting the availability of efficient numerical tools in convex programming. A major goal of our study is to address the computational challenges posed by the mathematical properties given by the choice of loss functions and to provide numerical results demonstrating the benefits of sample selection when dealing with potentially noisy and contaminated data. 

Our algorithm is built upon existing numerical methods that alternately solve for one variable at a time. Such an iterative scheme for solving multi-block problems, known as the Gauss-Seidel method \cite{bertsekas1989parallel}, has received considerable attention in the literature, especially with recent innovations in large-scale, data-driven methods. The convergence of the algorithm has been shown in early works in convex and continuously differentiable settings \cite{BeckTetruashvili2013, Warga1963, wright2015coordinate}
and later extended to nonconvex and nonsmooth problems \cite{Bolte2013, razaviyayn2013unified, razaviyayn2014parallel, Tseng2001}. A key component to tackling the computational challenge in the latter problems is to approximate each subproblem, e.g., by linearization or convex approximation, so that each iteration can be solved efficiently using readily available tools. In this work, we adopt the blockwise minimization methods to alternately minimize the weights of the data points and the model parameters. This work emphasizes nonconvex loss functions, including a difference-of-convex (DC) loss, that generalizes many existing loss functions used in practice, e.g., convex and weakly convex loss functions. We approximate each subproblem using majorization techniques \cite{hunter2004tutorial} and analyze the algorithm's convergence. Our results extend the existing theory of the alternating minimization scheme for the two-block problem, in which each block, while fixing the other, can be written as a DC function.

Our work is motivated by the growing imperative in modern statistics and machine learning to learn with better utilization of training data. In robust regression, \cite{gao2018penalized, jiang2021outlier} demonstrate that simultaneously training model parameters and optimizing data weights can significantly strengthen performance. However, many existing approaches are tailored to specific formulations, such as a convex loss paired with specialized sparsity functions, and often make restrictive assumptions about the underlying properties of the data. To address these limitations, we propose a generalized framework that accommodates a broad class of loss and sparsity functions and an algorithm that solves the resulting formulation efficiently, without requiring a priori knowledge of the data's underlying structure. We summarize the main contributions of this work as follows:

\begin{enumerate}[noitemsep]
\item We propose a bivariate formulation that simultaneously minimizes the weight of data points and model parameters. Our method aims to promote selective utilization of training data points by applying sparsity regularizers to the weights, thereby performing data selection while training the model;  
\item We employ an alternating minimization scheme to solve the proposed nonconvex problem, and discuss the majorization techniques used for each subproblem for DC loss and functions with a Lipschitz-continuous gradient. Our convergence analysis extends existing theory of the two-block alternating minimization scheme for the special case studied in this paper where the objective is a bi-DC function;
\item We conduct extensive computational experiments to demonstrate the efficacy of our method compared to suitable benchmark models. For this purpose, we test the proposed method in linear regression, binary classification via neural networks, and recommender systems under various data-anomaly settings: ($i$) when data points follow different underlying distributions, ($ii$) when the labels in the classification problems are contaminated, and ($iii$) when the data contains random noise and adversarial attacks. 
\end{enumerate}

This paper is organized as follows. In Section~\ref{sec:framework}, we introduce our framework, including the nonconvex loss and penalty functions used in this work, and define the concept of stationarity for subsequent analysis. Section~\ref{sec:algorithm} presents our algorithm, subproblem approximations, and the convergence analysis, followed by three numerical study experiments in Section~\ref{sec:numerical_experiment}. This document presents theoretical results and omits proofs; the proofs are included in the Appendices. We close this section by defining notations used throughout the paper.  

\textbf{Notations}: Let $f(\x,\y)$ be a bivariate function in the variables $\x$ and $\y$. We define $f_{\tilde{\y}} (\x) \triangleq f(\x, \tilde{\y})$ a function of $\x$ while fixing $\y = \tilde{\y}$. The function $f_{\tilde{\x}}(\y)$ is similarly defined.
The directional derivative of a function $f:\mathbb{R}^n \rightarrow \mathbb{R}$ at a point $\x \in \mathbb{R}^n$ in the direction $\h d \in \mathbb{R}^n$ is defined as  
\begin{align} \label{def:d_deriv}
     f'(\x; \h d) \triangleq \lim \limits_{\tau \downarrow 0} \displaystyle{\frac{f( \, \x + \tau \h d \,) - f(\x)}{\tau}}.
\end{align}
For a vector \(\x \in \mathbb{R}^n_+\), we define
\(
    \x^{1/2} \triangleq
    \left(\sqrt{x_1},\ldots,\sqrt{x_n}\right)^\mathsf{T}.
\)
For a nonnegative matrix \(X\), the notation \(X^{1/2}\) is defined similarly, entrywise. For vectors \(\h u,\h v \in \mathbb{R}^n\), \(\h u \odot \h v\) denotes the Hadamard product, \(\h u \odot \h v \triangleq (u_1v_1,\ldots,u_nv_n)^\mathsf{T}.\) Likewise, the Hadamard product of two matrices of the same size is their elementwise product.

\section{Framework} \label{sec:framework}
Consider a supervised learning framework in which each data point is assigned an unknown weight that captures its significance to the model. Let $\h \xi^i \in \mathbb{R}^{d+1}$ be the $i$-th observation, containing $d$ input features and a response. This results in the training set $\{ \h \xi^i \}_{i=1}^n$. Denoting the model coefficients as $\x \in \mathbb{R}^d$ and the weight vector as $\w \in \mathbb{R}^n$, we propose a bivariate optimization problem where both $\x$ and $\w$ are computed simultaneously during the training process. We refer to the proposed method as the joint sparse sample weighting and parameter estimation (SWAPE), which solves:
\begin{align} \label{prob:mainprob}
    \minimize \limits_{\x \in \mathcal{X}, \w \in \mathcal{W}} \
    &F(\x, \w) = \underbrace{ \sum \limits_{i=1}^n w_i \, \ell(\x; \, \h \xi^i) }_{\triangleq \mathcal{L}(\x, \w)} + \lambda \, R(\x) + \gamma \, \underbrace{\sum \limits_{i=1}^n \rho(w_i)}_{\triangleq P(\w)}, 
\end{align}
where $\mathcal{X}$ is a compact convex set and $\mathcal{W}$ is a box constraint set defined as $\mathcal{W} \triangleq \{ \, \w \, ~\vert~ 0 \leq w_i \leq u_i \text{ for some } u_i > 0 \quad \forall \ i \in \{1, \dots, n\} \, \}$. 

We assume the model's loss measured by $\ell:\mathbb{R}^d \rightarrow \mathbb{R}_{+}$ is nonnegative. Therefore, we restrict the weights to take nonnegative values in the weighted loss, denoted as $\mathcal{L}:\mathcal{X} \times \mathcal{W} \rightarrow \mathbb{R}_{+}$. Additionally, a penalty function, $P:\mathbb{R}^n \rightarrow \mathbb{R}$, captures the complexity of the weights of the data points used in the model. We employ and modify existing sparsity functions for this purpose. In \eqref{prob:mainprob}, a convex and lower-bounded regularizer, $R: \mathbb{R}^d \rightarrow \mathbb{R}$, is included to accommodate existing regularized methods. The scalars $\lambda, \, \gamma > 0$ determine the weight of the regularizer and penalty, respectively. 
The computational challenges of solving \eqref{prob:mainprob} is associated with the choices of $\ell$ and $\rho$. In Section~\ref{sec:loss_penalty}, we discuss more details and list the requirements made on the loss and penalty functions. 

\subsection{Assumptions on the loss and penalty functions} \label{sec:loss_penalty}
This work focuses on two classes of loss functions for training the model coefficients $\x$. For a fixed arbitrary $\w$, we assume that loss function $\mathcal{L}_{\w}(\x)$ satisfies one of the following conditions:
\begin{enumerate}[label=(L\arabic*)] 
\item \hspace{-0.45pc} \label{assume:L_dc} The function $\mathcal{L}_{\w}(\x)$ is a difference-of-convex (DC) function, i.e., there exist two convex functions $\mathcal{L}_{\w}^{\rm cvx} (\x)$ and $\mathcal{L}_{\w}^{\rm cve} (\x)$ such that $\mathcal{L}_{\w}(\x) = \mathcal{L}_{\w}^{\rm cvx} (\x)-\mathcal{L}_{\w}^{\rm cve} (\x) \text{ for any } \x \in \mathcal{X}$;

\item \hspace{-0.45pc} \label{assume:L_lipgrad} 
There exists $\lip > 0$ such that 
\begin{align} \label{assume:Lip_ineq}
    \mathcal{L}_{\w}(\x) - \mathcal{L}_{\w}(\h y) - \la \nabla \mathcal{L}_{\w}(\h y), \x - \h y \ra \leq \frac{\lip}{2} \| \x - \h y \|_2^2 \quad \forall \ \x, \h y \in \mathcal{X}.
\end{align}
\end{enumerate}
The first class of loss functions, shown in \ref{assume:L_dc}, includes convex loss functions, such as generalized linear models \cite{James2021}, and nonconvex loss functions with a DC representation, e.g., a shallow feedforward neural network \cite{awasthi2024dc}. 

The second class of loss functions, in \ref{assume:L_lipgrad}, includes functions with a Lipschitz-continuous gradient.
The condition \eqref{assume:Lip_ineq} is also closely related to weakly convex functions; a function $f$ is called weakly convex if there exists a weak convexity modulus $\sigma > 0$ such that $f (\x) + \frac{\sigma}{2}\| \x \|_2^2$ is convex. With some algebraic work, one can show \eqref{assume:Lip_ineq} is equivalent to $-\mathcal{L}(\x)$ being a weakly convex function.
Additionally, it is not difficult to see that weakly convex functions form a subclass of DC functions since one can write a weakly convex function $f$ as $f (\x) =  f (\x) + \frac{\sigma}{2}\| \x \|_2^2 - \frac{\sigma}{2}\| \x \|_2^2$, a difference of two convex functions $f (\x) + \frac{\sigma}{2}\| \x \|_2^2$ and $\frac{\sigma}{2}\| \x \|_2^2$. However, the converse relationship does not hold in general. To complete the relationship between DC loss and weakly convex functions, Proposition~\ref{prop:DcWc} identifies a condition for a DC loss $\mathcal{L}_{\w} (\x)$ satisfying \ref{assume:L_dc} to achieve weakly convexity.
\begin{proposition} \label{prop:DcWc}
    Let \ref{assume:L_dc} hold for $\mathcal{L}_{\w}(\x)$. If there exists $\lipLcve > 0$ such that
        \begin{align} \label{Lcve_Lipgrad}
        \mathcal{L}_{\w}^{\rm cve} (\h y) - \mathcal{L}_{\w}^{\rm cve} (\x) - \mathcal{L}_{\w}^{\rm cve} \, ' (\x; \h y - \x) \leq \frac{\lipLcve}{2} \| \x - \h y \|_2^2 \quad \forall \x, \, \h y \in \mathcal{X},
        \end{align}
    then $\mathcal{L}_{\w}(\x)$ is weakly convex with modulus $\lipLcve$.
\end{proposition}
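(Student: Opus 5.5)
The plan is to show that $g(\x) \triangleq \frac{\lipLcve}{2}\|\x\|_2^2 - \mathcal{L}_{\w}^{\rm cve}(\x)$ is convex on $\mathcal{X}$. Once this holds, we can write
\[
\mathcal{L}_{\w}(\x) + \frac{\lipLcve}{2}\|\x\|_2^2 = \mathcal{L}_{\w}^{\rm cvx}(\x) + g(\x),
\]
which is a sum of two convex functions and hence convex. This is exactly weak convexity with modulus $\lipLcve$ in the sense defined before the proposition. So the whole argument reduces to transferring the upper quadratic bound \eqref{Lcve_Lipgrad} on the convex function $\mathcal{L}_{\w}^{\rm cve}$ into convexity of $g$. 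This mirrors the remark in the text that \eqref{assume:Lip_ineq} is equivalent to $-\mathcal{L}$ being weakly convex.

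The key steps are as follows. First, compute the directional derivative of $g$. Since $\frac{\lipLcve}{2}\|\cdot\|_2^2$ is differentiable and $\mathcal{L}_{\w}^{\rm cve}$, being convex, is directionally differentiable on $\mathcal{X}$ (along feasible directions, as $\mathcal{X}$ is convex), we get
\[
g'(\x;\h y-\x) = \lipLcve\la \x, \h y - \x\ra - \mathcal{L}_{\w}^{\rm cve}\,'(\x;\h y-\x).
\]
Second, use the identity $\frac{\lipLcve}{2}\|\h y\|_2^2 - \frac{\lipLcve}{2}\|\x\|_2^2 - \lipLcve\la\x,\h y-\x\ra = \frac{\lipLcve}{2}\|\h y-\x\|_2^2$. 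Rearranging \eqref{Lcve_Lipgrad} then gives, for all $\x,\h y\in\mathcal{X}$,
\[
g(\h y) \ge g(\x) + g'(\x;\h y-\x).
\]
Third, conclude convexity of $g$ from this first-order inequality. For this, fix $\x,\h y$ and set $\h z_t = t\x + (1-t)\h y$. Apply the inequality at base point $\h z_t$ in the directions $\x-\h z_t = (1-t)(\x-\h y)$ and $\h y-\h z_t = -t(\x-\h y)$. Then use positive homogeneity of the directional derivative, and average with weights $t$ and $1-t$. This yields
\[
t g(\x)+(1-t)g(\h y) \ge g(\h z_t) + (1-t)t\big[g'(\h z_t;\h d)+g'(\h z_t;-\h d)\big], \qquad \h d = \x-\h y.
\]

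The main obstacle is this third step. For a nonsmooth $\mathcal{L}_{\w}^{\rm cve}$, the bracket need not be nonnegative a priori: $g'(\h z;\h d)+g'(\h z;-\h d) = -\big[\mathcal{L}_{\w}^{\rm cve}\,'(\h z;\h d)+\mathcal{L}_{\w}^{\rm cve}\,'(\h z;-\h d)\big]\le 0$ by convexity. I would resolve this by first showing that \eqref{Lcve_Lipgrad} forces $\mathcal{L}_{\w}^{\rm cve}$ to be differentiable, so that the bracket vanishes. Apply \eqref{Lcve_Lipgrad} with $\h y = \h z+\tau\h d$, and separately use convexity in the direction $-\h d$. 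This gives $\mathcal{L}_{\w}^{\rm cve}\,'(\h z;\h d) \le -\mathcal{L}_{\w}^{\rm cve}\,'(\h z;-\h d) + O(\tau)$, and letting $\tau\downarrow 0$ shows the directional derivative is linear in $\h d$. Here I expect to need care at boundary points of $\mathcal{X}$, or an interior argument plus continuity.

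Alternatively, I could bypass this via a one-dimensional restriction: $\phi(t) = g(\x + t(\h y-\x))$ has right derivatives satisfying a monotonicity inequality derived from applying the first-order bound twice, once with the roles of $\x$ and $\h y$ swapped, and summing. Monotone right derivatives imply that $\phi$ is convex, which gives convexity of $g$ on $\mathcal{X}$.
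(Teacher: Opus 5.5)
Your reduction to convexity of $g=\tfrac{\lipLcve}{2}\|\cdot\|_2^2-\mathcal{L}_{\w}^{\rm cve}$ and your Step 2 are correct, and they amount to the paper's computation. The paper adds the convex first-order inequality for $\mathcal{L}_{\w}^{\rm cvx}$ to the rearranged \eqref{Lcve_Lipgrad}, obtains $h(\h y)\ge h(\x)+h'(\x;\h y-\x)$ for $h=\mathcal{L}_{\w}+\tfrac{\lipLcve}{2}\|\cdot\|_2^2$, and stops there, tacitly treating this directional first-order inequality as convexity. You are right that this step needs an argument when $\mathcal{L}_{\w}^{\rm cve}$ is nonsmooth, but your patch does not work as written. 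Put $f\triangleq\mathcal{L}_{\w}^{\rm cve}$. Using \eqref{Lcve_Lipgrad} at the base point $\h z$ itself, with $\h y=\h z+\tau\h d$, only gives a \emph{lower} bound on $f'(\h z;\h d)$. Convexity only bounds $f'(\h z;\h d)+f'(\h z;-\h d)$ above by $\tfrac1\tau\bigl[f(\h z+\tau\h d)+f(\h z-\tau\h d)-2f(\h z)\bigr]$, which you cannot control from base point $\h z$. Indeed, no inequality at base point $\h z$ can rule out a kink at $\h z$: $f=|\cdot|$ satisfies \eqref{Lcve_Lipgrad} at base point $0$ with zero left-hand side. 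Your one-dimensional alternative has the same hole. Swapping roles and summing yields $\phi'_+(a)\le\phi'_-(b)$ for $a<b$, where $\phi'_-(b)\triangleq-\phi'(b;-1)$ is a \emph{left} derivative. Turning this into monotone right derivatives requires $\phi'_-\le\phi'_+$, which is exactly the no-kink property you wanted to bypass.

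The missing idea is to apply \eqref{Lcve_Lipgrad} from a base point on the other side of $\h z$: take $\x=\h z-\tau\h d$ and $\h y=\h z+\tau\h d$. Positive homogeneity and the convexity chord bound $f'(\h z-\tau\h d;\h d)\le \tfrac{1}{\tau}\bigl(f(\h z)-f(\h z-\tau\h d)\bigr)$ give
\[
f(\h z+\tau\h d)+f(\h z-\tau\h d)-2f(\h z)\le 2\lipLcve\,\tau^2\|\h d\|_2^2 .
\]
Dividing by $\tau$ and letting $\tau\downarrow0$ yields $f'(\h z;\h d)+f'(\h z;-\h d)\le 0$. Combined with convexity this is an equality, so the bracket in your Step 3 vanishes. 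No boundary care is needed. With $\h z_t=\h y+t\h d$ and $t\in(0,1)$, the points $\h z_t\pm\tau\h d$ lie on the segment between $\h y$ and $\x$, hence in $\mathcal{X}$, for $\tau\le\min\{t,1-t\}$. Alternatively, you can justify the paper's final step directly. Suppose a continuous $\phi$ with finite right derivatives satisfies $\phi(s)\ge\phi(r)+\phi'_+(r)(s-r)$ for all $r<s$, and subtract its chord on $[r_0,s_0]$. If the difference had a positive interior maximum at $u$, applying the inequality toward $s_0$ would make its right derivative strictly negative on a left neighborhood of $u$, contradicting maximality.
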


We next discuss mathematical properties of the univariate sparsity regularizer $\rho$.
It is well known that most existing sparsity functions, e.g., the $\ell_1$-norm used in LASSO \cite{tibshirani96lasso}, aim to promote sparsity in the model coefficients by setting the values to zero.
However, directly substituting such a function into $\rho$ in \eqref{prob:mainprob} is not meaningful. For instance, with $R(\x) = 0$, we have a trivial solution for the problem by setting $\w = 0$. Therefore, we consider a modified version of the sparsity functions that promote the weight variable to take the value $1$ rather than $0$. With this modification, the sparsity function $\rho$ gives the model the flexibility to use (or omit) the $i$-th data point during training at a penalty. Below, we present assumptions made on the sparsity function $\rho$ in the scalar variable $t \in \mathbb{R}$: 
\begin{enumerate}[label=(P\arabic*)] 
\setcounter{enumi}{2}

\item \hspace{-0.45pc} \label{assume:p_wc} The function $\rho:\mathbb{R} \rightarrow \mathbb{R}$ is a weakly convex function, i.e., there exists $\sigma_{\rho} >0$ such that $\rho(t) + \frac{\sigma_{\rho}}{2} t^2$ is convex;

\item \hspace{-0.45pc} \label{assume:p_foldedconcave} The function $\rho(t)$ is symmetric about $t=1$, satisfies $\rho(t) = 0$ when $t=1$, and is concave and non-decreasing on the interval $[1,\infty)$.
\end{enumerate}
The assumptions \ref{assume:p_wc} and \ref{assume:p_foldedconcave} include existing folded-concave sparsity functions, such as SCAD \cite{fan2001variable}, MCP \cite{zhang2010nearly}, and transformed $\ell_1$ penalty \cite{Nikolova00}, with a mild modification of shifting the function horizontally by a constant. Figure~\ref{fig:penalty-plots-1} illustrates variants of LASSO and MCP that meet our requirements. Both functions have been horizontally shifted by one unit such that they are symmetric at $t=1$.

\subsection{Concept of stationarity}
The problem \eqref{prob:mainprob} is nonconvex and nondifferentiable; therefore, one should carefully select the type of stationary solutions to analyze the problem. For this purpose, we first show that \eqref{prob:mainprob} is directionally differentiable under the assumptions made on the loss and the sparsity regularizer. Lemma~\ref{lem:d_deriv} below shows this result.
\begin{lemma} \label{lem:d_deriv}
    Let \ref{assume:L_dc} or \ref{assume:L_lipgrad} hold. Under \ref{assume:p_wc}, the function $F (\h x, \h w)$ in \eqref{prob:mainprob} is directionally differentiable, and its directional derivative at the point $\h z \triangleq (\h x, \h w)$ in the direction $\h d \triangleq (\h d^{\h x}, \h d^{\h w}) \in \mathbb{R}^{d+n}$ is written as 
    \begin{align} \label{lem:d_deriv_split}
        F'(\h z; \h d) = F_{\w}'(\h x; \h d^{\h x}) + F_{\x}'(\h w; \h d^{\h w}).
    \end{align}
\end{lemma}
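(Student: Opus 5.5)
We split $F$ into its three terms and handle each separately. We write $F(\x,\w) = \mathcal{L}(\x,\w) + \lambda R(\x) + \gamma P(\w)$.

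\textbf{The separable terms.} $R$ is convex and finite on $\mathbb{R}^d$, so it is directionally differentiable. Hence $(\lambda R)'(\h z;\h d) = \lambda R'(\x;\h d^{\x})$, which does not involve $\h d^{\w}$.

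For $P$, assumption \ref{assume:p_wc} lets us write $\rho(t) = \big(\rho(t) + \tfrac{\sigma_\rho}{2}t^2\big) - \tfrac{\sigma_\rho}{2}t^2$. This is a convex function minus a smooth one, so $\rho$ is directionally differentiable. Summing over $i$, we get $(\gamma P)'(\h z;\h d) = \gamma \sum_i \rho'(w_i; d^{\w}_i)$, which depends only on $\h d^{\w}$.

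\textbf{The coupling term.} We write
\[
\mathcal{L}(\x+\tau\h d^{\x}, \w+\tau\h d^{\w}) - \mathcal{L}(\x,\w) = \sum_i w_i\big[\ell(\x+\tau\h d^{\x};\h\xi^i)-\ell(\x;\h\xi^i)\big] + \tau\sum_i d^{\w}_i\, \ell(\x+\tau\h d^{\x};\h\xi^i).
\]
After dividing by $\tau$, the second sum tends to $\sum_i d^{\w}_i\,\ell(\x;\h\xi^i) = \mathcal{L}_{\x}'(\w;\h d^{\w})$. This requires continuity of each $\ell(\cdot;\h\xi^i)$ at $\x$.

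The first sum, divided by $\tau$, should tend to $\mathcal{L}_{\w}'(\x;\h d^{\x})$. Under \ref{assume:L_lipgrad}, $\mathcal{L}_{\w}$ is differentiable, so this limit exists and equals $\langle\nabla\mathcal{L}_{\w}(\x),\h d^{\x}\rangle$. Under \ref{assume:L_dc}, $\mathcal{L}_{\w}$ is a difference of convex functions, each of which is directionally differentiable, so the limit again exists.

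\textbf{Main obstacle.} The main difficulty is justifying the continuity of $\ell(\cdot;\h\xi^i)$ and the existence of $\mathcal{L}_{\w}'$ for the fixed $\w$. The assumptions are stated for $\mathcal{L}_{\w}$ rather than for each $\ell(\cdot;\h\xi^i)$.

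My first attempt is to choose $\w = e_i$ (scaled to lie in $\mathcal{W}$). Assumption \ref{assume:L_dc} or \ref{assume:L_lipgrad}, which is stated for arbitrary $\w$, then yields that each $\ell(\cdot;\h\xi^i)$ is DC or $C^1$. Either property gives continuity in the interior of the domain. Directional differentiability of $\mathcal{L}_{\w}$ then follows by linearity, since the directional derivatives of the individual $\ell(\cdot;\h\xi^i)$ exist and can be combined with the weights $w_i$. Finite DC functions on $\mathbb{R}^d$ are continuous, which handles the remaining continuity requirement.

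\textbf{Conclusion.} Adding the three limits gives $F'(\h z;\h d)$. We regroup the $\h d^{\x}$-terms as $F_{\w}'(\x;\h d^{\x}) = \mathcal{L}_{\w}'(\x;\h d^{\x}) + \lambda R'(\x;\h d^{\x})$. We regroup the $\h d^{\w}$-terms as $F_{\x}'(\w;\h d^{\w}) = \sum_i d^{\w}_i\ell(\x;\h\xi^i) + \gamma P'(\w;\h d^{\w})$. This yields \eqref{lem:d_deriv_split}.
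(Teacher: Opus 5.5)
Your proof is correct and follows essentially the same route as the paper's. Both use the splitting $(w_i+\tau d^{\w}_i)\,\ell(\x+\tau \h d^{\x};\h\xi^i)-w_i\,\ell(\x;\h\xi^i) = w_i[\ell(\x+\tau\h d^{\x};\h\xi^i)-\ell(\x;\h\xi^i)]+\tau d^{\w}_i\,\ell(\x+\tau\h d^{\x};\h\xi^i)$, directional differentiability of $R$ and of $\rho$ (via \ref{assume:p_wc}), and termwise passage to the limit. Your choice $\w=u_i e_i\in\mathcal{W}$ fills a step the paper only asserts: that \ref{assume:L_dc} or \ref{assume:L_lipgrad} makes each individual $\ell(\cdot;\h\xi^i)$ DC or differentiable, which the limit of the $\tau\sum_i d^{\w}_i\,\ell(\x+\tau\h d^{\x};\h\xi^i)$ term needs.
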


\begin{figure}[t] \centering \includegraphics[width=0.7\linewidth]{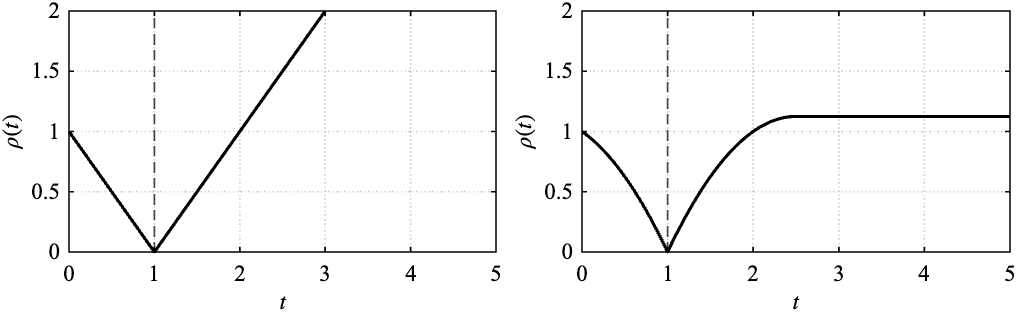} \caption{ Penalty functions: LASSO ($\lambda = 1$) on the left, and MCP ($\eta = 1$, $\theta = 1.5$) on the right.} \label{fig:penalty-plots-1} \end{figure}

Note that a function that is not differentiable in the ordinary sense may be directionally differentiable. For example, it has been shown that the directional derivative of a convex function is well-defined at any point \cite{NLP_Bazarra_et_al}.
Lemma~\ref{lem:d_deriv} shows that, under the stated assumptions, the problem \eqref{prob:mainprob} has a directionally differentiable objective subject to a convex constraint set. For such a structured problem, the directional stationary solution, used in nonconvex and nonsmooth optimization \cite{Cui2021}, is a suitable solution type for analyzing the problem. We formally define a directional stationary solution below. 
\begin{definition}
Consider a closed convex set $\mathcal{X}$. A point $\h x^* \in \mathcal{X}$ is a directional stationary solution of the problem $\min \limits_{\h x \in \mathcal{X}} f(\h x)$ if  $f'(\x^*; \x - \x^*) \geq 0$ for all $\x \in \mathcal{X}$. 
\end{definition}
We next propose an algorithm to solve \eqref{prob:mainprob} and then show the convergence analysis using the directional stationary solution.

\section{Algorithm} \label{sec:algorithm}
Instead of a direct minimization of $F(\x, \w)$ in \eqref{prob:mainprob}, we iteratively solve for $\x$ and $\w$ while fixing the other variable. Employing the majorize-minimization (MM) framework \cite{hunter2004tutorial,lange2016mm} for each subproblem, we propose iteratively minimizing a surrogate function, which is an upper-bound estimation given by the previous iterate. Our approach is a special case of the block successive upper-bound minimization algorithm \cite{razaviyayn2013unified} with a two-block structure.

\subsection{Majorize-minimization framework} \label{sec:MM}
To introduce our framework, let us consider problem \eqref{prob:mainprob} in the variable $\x$ while fixing the variable $\w = \w^t$. We define a resulting function only involving terms associated with $\x$: 
\begin{align} \label{MM:F}
    \widehat{F}_{\w^t}(\x) \triangleq  \mathcal{L}_{\w^t}(\x) + \lambda R(\x).
\end{align}
For any given $\x^t$, the surrogate function denoted by $\widehat{G}_{\w^t}:\mathbb{R}^d \rightarrow \mathbb{R}$, shall satisfy the following conditions:
\begin{align} \label{MM:cond}
    \widehat{F}_{\w^t}(\x^t) = \widehat{G}_{\w^t}(\x^t; \x^t) \quad \text{and} \quad \widehat{F}_{\w^t}(\x) \leq \widehat{G}_{\w^t}(\x; \x^t) \ \forall \x \in \mathcal{X}.
\end{align}
Likewise, while fixing variable $\x = \x^{t+1}$, define 
\begin{align} \label{MM:F_W}
    \widehat{F}_{\x^{t+1}}(\w) \triangleq \mathcal{L}_{\x^{t+1}}(\w) + \gamma P(\w)
\end{align}
and a function $\widehat{G}_{\x^{t+1}}: \mathbb{R}^n \rightarrow \mathbb{R}$ to satisfy
\begin{align} \label{MM:cond_W}
    \widehat{F}_{\x^{t+1}}(\w^t) = \widehat{G}_{\x^{t+1}}(\w^t; \w^t) \quad \text{and} \quad \widehat{F}_{\x^{t+1}}(\w) \leq \widehat{G}_{\x^{t+1}}(\w; \w^t) \ \forall \w \in \mathcal{W},
\end{align}
for any $\w^t$.
In \eqref{MM:cond}, the function $\widehat{G}_{\w^t}(\x; \x^t)$ serves as an upper bound of $\widehat{F}_{\w^t}(\x)$ and, likewise, $\widehat{G}_{\x^{t+1}}(\w; \w^t)$ for $\widehat{F}_{\x^{t+1}}(\w)$ in \eqref{MM:cond_W}. Our method sequentially minimizes the discussed upper approximation problems of \eqref{prob:mainprob} in each variable to obtain the next iterate. The overall framework is presented below:
\begin{equation}\label{alg:subproblems}
\left\{\begin{array}{l}
\x^{t+1} \in \argmin \limits_{\x \in \mathcal{X}} \widehat{G}_{\w^t}(\x; \x^t), \quad \text{such that } \widehat{G}_{\w^t}(\x; \x^t) \text{ satisfies } \eqref{MM:cond}; \\
\w^{t+1} \in \argmin \limits_{\w \in \mathcal{W}} \widehat{G}_{\x^{t+1}}(\w; \w^t) \quad \text{such that } \widehat{G}_{\x^{t+1}}(\w; \w^t) \text{ satisfies } \eqref{MM:cond_W}.
\end{array}\right.
\end{equation}

\textbf{The $\mathbf{\x}$-problem.}
Next, we present $\widehat{G}_{\w^t}(\x; \x^t)$ for different cases of loss functions considered in this paper. 
First, for the case of DC loss, i.e., $\mathcal{L}_{\w^t} (\x)$ satisfies \ref{assume:L_dc}, we apply the first-order approximation for the concave component and obtain
\begin{align} \label{MM:dc}
    \widehat{G}_{\w^t}(\x; \x^t) \triangleq \mathcal{L}_{\w^t}^{\rm cvx} (\x) - \mathcal{L}_{\w^t}^{\rm cve} (\x^t) - \la \zeta_{t}, \x - \x^t \ra + \lambda \, R(\x) + \frac{c_{\x}}{2} \| \x - \x^t \|_2^2,
\end{align}
where $\zeta_{t} \in \partial \mathcal{L}_{\w^t}^{\rm cve} (\x^t)$ and $c_{\x}$ is a nonnegative scalar. Such an estimation is widely used to minimize a DC function, e.g., DC algorithm in \cite{tao1997convex}. We formally state the relationship between $\widehat{F}_{\w^t}(\x)$ and $\widehat{G}_{\w^t}(\x; \x^t)$ in \eqref{MM:dc} in Lemma~\ref{lemma:MM_DC}. 
\begin{lemma} \label{lemma:MM_DC}
    Let \ref{assume:L_dc} hold. The functions $\widehat{F}_{\w^t}(\x)$ and $\widehat{G}_{\w^t}(\x; \x^t)$, defined in \eqref{MM:F} and \eqref{MM:dc}, satisfy the condition \eqref{MM:cond} for any $\x^t \in \mathcal{X}$.
\end{lemma}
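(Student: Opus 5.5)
Both requirements in \eqref{MM:cond} can be checked directly from the definitions \eqref{MM:F} and \eqref{MM:dc}, using \ref{assume:L_dc} and the subgradient inequality for the convex function $\mathcal{L}_{\w^t}^{\rm cve}$. Fix $\w^t \in \mathcal{W}$ and $\x^t \in \mathcal{X}$, and let $\zeta_t \in \partial \mathcal{L}_{\w^t}^{\rm cve}(\x^t)$ be the subgradient used in \eqref{MM:dc}. We need $\partial \mathcal{L}_{\w^t}^{\rm cve}(\x^t) \neq \emptyset$ so that $\widehat{G}_{\w^t}$ is well defined. This holds if the convex component is finite on an open neighborhood of $\mathcal{X}$ (for instance, real-valued on $\mathbb{R}^d$), because subdifferentials of finite convex functions are nonempty at interior points of their domain. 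This is the one point where the statement quietly relies on the setup, and I would say so explicitly.

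\textbf{Equality at $\x = \x^t$.} Substitute $\x = \x^t$ into \eqref{MM:dc}. The linear term $\la \zeta_t, \x^t - \x^t \ra$ vanishes, and so does the proximal term $\frac{c_{\x}}{2}\|\x^t - \x^t\|_2^2$. What remains is $\mathcal{L}_{\w^t}^{\rm cvx}(\x^t) - \mathcal{L}_{\w^t}^{\rm cve}(\x^t) + \lambda R(\x^t)$. By \ref{assume:L_dc}, this equals $\mathcal{L}_{\w^t}(\x^t) + \lambda R(\x^t) = \widehat{F}_{\w^t}(\x^t)$.

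\textbf{Upper bound on $\mathcal{X}$.} Take any $\x \in \mathcal{X}$. Since $\mathcal{L}_{\w^t}^{\rm cve}$ is convex and $\zeta_t$ is a subgradient at $\x^t$,
\begin{align*}
\mathcal{L}_{\w^t}^{\rm cve}(\x) \geq \mathcal{L}_{\w^t}^{\rm cve}(\x^t) + \la \zeta_t, \x - \x^t \ra .
\end{align*}
Negate this inequality and add $\mathcal{L}_{\w^t}^{\rm cvx}(\x) + \lambda R(\x)$ to both sides. By \ref{assume:L_dc}, the left side becomes $\widehat{F}_{\w^t}(\x)$, so
\begin{align*}
\widehat{F}_{\w^t}(\x) \leq \mathcal{L}_{\w^t}^{\rm cvx}(\x) - \mathcal{L}_{\w^t}^{\rm cve}(\x^t) - \la \zeta_t, \x - \x^t \ra + \lambda R(\x).
\end{align*}
The right-hand side is $\widehat{G}_{\w^t}(\x;\x^t)$ minus the term $\frac{c_{\x}}{2}\|\x - \x^t\|_2^2$. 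That term is nonnegative because $c_{\x} \geq 0$, so adding it back gives $\widehat{F}_{\w^t}(\x) \leq \widehat{G}_{\w^t}(\x;\x^t)$.

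The argument involves no real obstacle. The only care needed is that the DC decomposition in \ref{assume:L_dc} is taken for the fixed weight $\w^t$, so both identities hold pointwise on $\mathcal{X}$, and that the subgradient $\zeta_t$ exists, as discussed at the start.
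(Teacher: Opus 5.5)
Your proof is correct and follows the paper's own route. In both, the subgradient inequality for the convex component $\mathcal{L}_{\w^t}^{\rm cve}$ at $\x^t$ gives $\widehat{F}_{\w^t}(\x) \leq \widehat{G}_{\w^t}(\x;\x^t) - \frac{c_{\x}}{2}\|\x-\x^t\|_2^2$ on $\mathcal{X}$, and equality at $\x=\x^t$ follows by substitution; your remark that $\partial \mathcal{L}_{\w^t}^{\rm cve}(\x^t)$ must be nonempty for $\widehat{G}_{\w^t}$ to be well defined is a fair point that the paper leaves implicit.
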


For the case of the loss function  $\mathcal{L}_{\w^t} (\x)$ satisfies \ref{assume:L_lipgrad}, we construct
\begin{align} \label{MM:lipgrad}
    \widehat{G}_{\w^t}(\x; \x^t) \triangleq \mathcal{L}_{\w^t} (\x^t) + \la \nabla \mathcal{L}_{\w^t} (\x^t), \x - \x^t \ra + \lambda \, R(\x) + \frac{c_{\x}}{2} \| \x - \x^t \|_2^2,
\end{align}
for some $c_{\x} > 0$, following the approximation techniques used in the literature \cite{Bolte2013,razaviyayn2014parallel}.
The relationship between \eqref{MM:F} and \eqref{MM:lipgrad} is shown in Lemma~\ref{lemma:MM_lipgrad}.
\begin{lemma} \label{lemma:MM_lipgrad}
    Let \ref{assume:L_lipgrad} hold. If $c_{\x} \geq \lip$, the functions $\widehat{F}_{\w^t}(\x)$ and $\widehat{G}_{\w^t}(\x; \x^t)$, defined in \eqref{MM:F} and \eqref{MM:lipgrad}, satisfy the condition \eqref{MM:cond}.
\end{lemma}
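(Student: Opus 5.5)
The plan is to verify the two requirements in \eqref{MM:cond} directly from the definitions \eqref{MM:F} and \eqref{MM:lipgrad}. The regularizer term $\lambda R(\x)$ appears identically in both $\widehat{F}_{\w^t}$ and $\widehat{G}_{\w^t}$, so it cancels in every comparison. Everything therefore reduces to comparing $\mathcal{L}_{\w^t}(\x)$ with its quadratic model
\[
\mathcal{L}_{\w^t}(\x^t) + \la \nabla \mathcal{L}_{\w^t}(\x^t), \x - \x^t \ra + \frac{c_{\x}}{2}\|\x - \x^t\|_2^2 .
\]

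\textbf{Step 1 (tightness at $\x^t$).} Substitute $\x = \x^t$ into \eqref{MM:lipgrad}. The linear term and the proximal term both vanish, leaving $\mathcal{L}_{\w^t}(\x^t) + \lambda R(\x^t)$, which is exactly $\widehat{F}_{\w^t}(\x^t)$. This holds for any $\x^t \in \mathcal{X}$ and does not use the bound on $c_{\x}$.

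\textbf{Step 2 (majorization on $\mathcal{X}$).} Fix $\x \in \mathcal{X}$. Apply \eqref{assume:Lip_ineq} from \ref{assume:L_lipgrad}, with $\w = \w^t$, $\h y = \x^t$, and both points in $\mathcal{X}$. This gives
\[
\mathcal{L}_{\w^t}(\x) \leq \mathcal{L}_{\w^t}(\x^t) + \la \nabla \mathcal{L}_{\w^t}(\x^t), \x - \x^t\ra + \tfrac{\lip}{2}\|\x-\x^t\|_2^2 .
\]
Since $c_{\x} \geq \lip$, the last term is at most $\tfrac{c_{\x}}{2}\|\x-\x^t\|_2^2$. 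Adding $\lambda R(\x)$ to both sides yields $\widehat{F}_{\w^t}(\x) \le \widehat{G}_{\w^t}(\x;\x^t)$.

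\textbf{Main obstacle.} The only delicate point is that \eqref{assume:Lip_ineq} must hold with a constant $\lip$ that is uniform in $\w$, or at least valid for the current weight $\w^t$, and on the whole of $\mathcal{X}$. I will read assumption \ref{assume:L_lipgrad} as giving exactly this for the fixed arbitrary $\w^t$. I will also note that $\x^t \in \mathcal{X}$ is guaranteed by the iteration \eqref{alg:subproblems}, so the inequality applies at the base point $\h y = \x^t$. With this in place, no further argument is needed.
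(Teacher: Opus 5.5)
Your proposal is correct and follows the same route as the paper: apply \eqref{assume:Lip_ineq} with base point $\x^t$, add $\lambda R(\x)$ to both sides, and absorb the $\frac{\lip}{2}\|\x-\x^t\|_2^2$ term using $c_{\x}\geq \lip$. The paper records this slack as $\widehat{G}_{\w^t}(\x;\x^t)-\frac{c_{\x}-\lip}{2}\|\x-\x^t\|_2^2$ and reuses it in Theorem~\ref{thm:conv}; your explicit check of equality at $\x=\x^t$ spells out the tightness condition that the paper leaves implicit.
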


Lastly, if $\mathcal{L}_{\w^t} (\x)$ in \eqref{prob:mainprob} satisfies assumption \ref{assume:L_dc} with \eqref{Lcve_Lipgrad}, i.e., is weakly convex, we construct
\begin{align} \label{MM:wc}
    \widehat{G}_{\w^t} (\x; \x^t) = \widehat{F}_{\w^t}(\x) + \frac{c_{\x}}{2} \| \x - \x^t \|_2^2,
\end{align}
for some $c_{\x} \geq 0$. We observe that the function $ \widehat{G}_{\w^t} (\x; \x^t)$ in \eqref{MM:wc} can be convexified by choosing $c_{\x}$ greater than the weakly convexity modulus of $\mathcal{L}_{\w^t} (\x)$. Lemma~\ref{lemma:MM_wc} follows directly from Proposition~\ref{prop:DcWc} and states this property. 
\begin{lemma} \label{lemma:MM_wc}
    The functions $\widehat{F}_{\w^t}(\x)$ and $\widehat{G}_{\w^t}(\x; \x^t)$, defined in \eqref{MM:F} and \eqref{MM:wc} respectively, satisfy the condition \eqref{MM:cond}. Moreover, if $\mathcal{L}_{\w^t} (\x)$ satisfies \ref{assume:L_dc} such that inequality \eqref{Lcve_Lipgrad} holds and $c_{\x} \geq \lipLcve$, then $\widehat{G}_{\w^t} (\x; \x^t)$ in \eqref{MM:wc} is convex.
\end{lemma}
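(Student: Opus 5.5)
The statement has two parts, and I will handle them separately.

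\textbf{The majorization condition.} The condition \eqref{MM:cond} is immediate from the definition \eqref{MM:wc}. At $\x = \x^t$ the proximal term vanishes, which gives $\widehat{G}_{\w^t}(\x^t;\x^t) = \widehat{F}_{\w^t}(\x^t)$. For any $\x \in \mathcal{X}$, nonnegativity of $c_{\x}$ gives
\[
\widehat{G}_{\w^t}(\x;\x^t) - \widehat{F}_{\w^t}(\x) = \tfrac{c_{\x}}{2}\|\x-\x^t\|_2^2 \geq 0 .
\]
No assumption on the loss is needed for this part.

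\textbf{Convexity.} I will invoke Proposition~\ref{prop:DcWc}. Under \ref{assume:L_dc} together with \eqref{Lcve_Lipgrad}, it says $\mathcal{L}_{\w^t}$ is weakly convex with modulus $\lipLcve$. That is, $\mathcal{L}_{\w^t}(\x) + \frac{\lipLcve}{2}\|\x\|_2^2$ is convex. Next I expand the proximal term:
\[
\tfrac{c_{\x}}{2}\|\x-\x^t\|_2^2 = \tfrac{c_{\x}}{2}\|\x\|_2^2 - c_{\x}\langle \x^t, \x\rangle + \tfrac{c_{\x}}{2}\|\x^t\|_2^2 .
\]
This lets me rewrite the surrogate as a sum of four pieces:
\[
\widehat{G}_{\w^t}(\x;\x^t) = \Big[\mathcal{L}_{\w^t}(\x) + \tfrac{\lipLcve}{2}\|\x\|_2^2\Big] + \tfrac{c_{\x}-\lipLcve}{2}\|\x\|_2^2 + \lambda R(\x) + \big(\text{affine in } \x\big).
\]
The four pieces are convex for the following reasons:
\begin{itemize}
\item the bracket is convex by Proposition~\ref{prop:DcWc};
\item the second term is convex because $c_{\x} \geq \lipLcve$;
\item $R$ is convex by assumption and $\lambda > 0$;
\item affine functions are convex.
\end{itemize}
A sum of convex functions is convex, so $\widehat{G}_{\w^t}(\cdot;\x^t)$ is convex.

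The only subtlety is domain. Weak convexity, and the inequality \eqref{Lcve_Lipgrad}, are stated on $\mathcal{X}$. Since $\mathcal{X}$ is convex, convexity of the sum on $\mathcal{X}$ is all the subproblem requires, so I will state the conclusion as convexity on $\mathcal{X}$. I do not expect a genuine obstacle, because the substantive work is already done in Proposition~\ref{prop:DcWc}.
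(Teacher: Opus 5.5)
Your proposal is correct and matches the paper's approach. The paper gives no separate proof: it treats the majorization as immediate from the definition and says convexity follows directly from Proposition~\ref{prop:DcWc}, which is exactly your decomposition into the weakly convex part, the leftover quadratic with coefficient $c_{\x}-\lipLcve \ge 0$, the convex term $\lambda R$, and an affine remainder, and your restriction of the convexity claim to $\mathcal{X}$ is appropriate since \eqref{Lcve_Lipgrad} is only assumed there.
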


\textbf{The $\mathbf{\w}$-problem.} 
Recall our method in \eqref{alg:subproblems} alternately updates $\x$ and $\w$ by minimizing an approximation in each subproblem. Next, we present the surrogate function for updating the variable $\w$: 
\begin{align} \label{MM:wc_W}
    \widehat{G}_{\x^{t+1}}(\w; \w^t) \triangleq \widehat{F}_{\x^{t+1}}(\w) + \frac{c_{\w}}{2} \| \w - \w^t \|_2^2,
\end{align}
for some $c_{\w} \geq 0$. By adding a quadratic term, $\widehat{G}_{\x^{t+1}}(\w; \w^t)$ convexifies and serves as an upper bound for $\widehat{F}_{\x^{t+1}}(\w)$. This relationship between $\widehat{F}_{\x^{t+1}}(\w)$ and $\widehat{G}_{\x^{t+1}}(\w; \w^t)$ is stated in Lemma~\ref{lemma:MM_wc_w}.

\begin{lemma} \label{lemma:MM_wc_w}
    The functions $\widehat{F}_{\x^{t+1}}(\w)$ and $\widehat{G}_{\x^{t+1}}(\w; \w^t)$, defined in \eqref{MM:F_W} and \eqref{MM:wc_W}, satisfy condition \eqref{MM:cond_W}. Moreover, if \ref{assume:p_wc} holds with $c_{\w} \geq \gamma \sigma_{\rho}$, then $\widehat{G}_{\x^{t+1}}(\w; \w^t)$ is convex.
\end{lemma}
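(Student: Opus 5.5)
The argument has two parts, and I will treat them separately.

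\textbf{Majorization condition.} This follows directly from the definition \eqref{MM:wc_W}. Evaluating at $\w = \w^t$ makes the proximal term $\frac{c_{\w}}{2}\|\w^t-\w^t\|_2^2$ vanish, which gives $\widehat{G}_{\x^{t+1}}(\w^t;\w^t) = \widehat{F}_{\x^{t+1}}(\w^t)$. For any $\w \in \mathcal{W}$ we have $c_{\w} \ge 0$, so $\frac{c_{\w}}{2}\|\w-\w^t\|_2^2 \ge 0$ and therefore $\widehat{F}_{\x^{t+1}}(\w) \le \widehat{G}_{\x^{t+1}}(\w;\w^t)$. Together these give \eqref{MM:cond_W}. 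Nothing about $\ell$ or $\rho$ is needed here; the same reasoning yields the first claim of Lemma~\ref{lemma:MM_wc}.

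\textbf{Convexity.} I will decompose $\widehat{G}_{\x^{t+1}}(\w;\w^t)$ coordinatewise. Writing $a_i \triangleq \ell(\x^{t+1};\h\xi^i) \ge 0$, which are fixed constants once $\x^{t+1}$ is fixed, the loss term $\mathcal{L}_{\x^{t+1}}(\w) = \sum_i a_i w_i$ is linear in $\w$ and hence convex. It remains to show that $\gamma P(\w) + \frac{c_{\w}}{2}\|\w-\w^t\|_2^2$ is convex. Expanding the quadratic gives
\[
\sum_{i=1}^n \Big[\gamma\rho(w_i) + \tfrac{c_{\w}}{2} w_i^2\Big] - c_{\w}\la \w^t, \w\ra + \tfrac{c_{\w}}{2}\|\w^t\|_2^2 .
\]
The last two terms are affine in $\w$. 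Each summand splits as
\[
\gamma\Big(\rho(w_i) + \tfrac{\sigma_\rho}{2}w_i^2\Big) + \tfrac{c_{\w}-\gamma\sigma_\rho}{2}\,w_i^2 .
\]
The first piece is convex by \ref{assume:p_wc} because $\gamma>0$. The second piece is convex because $c_{\w}\ge\gamma\sigma_\rho$. A function of a single coordinate $w_i$ that is convex on $\mathbb{R}$ remains convex as a function on $\mathbb{R}^n$, and a sum of convex and affine functions is convex. Hence $\widehat{G}_{\x^{t+1}}(\cdot;\w^t)$ is convex on $\mathbb{R}^n$, and in particular on $\mathcal{W}$.

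\textbf{Main obstacle.} The only step requiring care is scaling the weak-convexity modulus by $\gamma$ correctly: $\gamma\rho$ is weakly convex with modulus $\gamma\sigma_\rho$, which is exactly why the threshold in the lemma is $c_{\w} \ge \gamma\sigma_\rho$. Beyond that, one should note explicitly that the nonnegativity of $\ell$ plays no role in convexity, since linearity of $\mathcal{L}_{\x^{t+1}}$ in $\w$ suffices.
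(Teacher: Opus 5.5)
Your proof is correct, and the majorization part matches the paper's, which simply says it ``holds by definition.'' For convexity, both arguments rest on the same mechanism: $\mathcal{L}_{\x^{t+1}}$ is linear in $\w$, and the weak-convexity modulus $\gamma\sigma_\rho$ of $\gamma P$ is absorbed by $c_{\w}$. Where you differ is in how convexity is certified.

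The paper argues at first order. It combines $\mathcal{L}_{\x^{t+1}}(\w) = \mathcal{L}_{\x^{t+1}}(\bar{\w}) + \mathcal{L}_{\x^{t+1}}'(\bar{\w};\w-\bar{\w})$ with the directional-derivative inequality from weak convexity of $P$. This gives $\widehat{F}_{\x^{t+1}}(\bar{\w}) + \widehat{F}_{\x^{t+1}}'(\bar{\w};\w-\bar{\w}) \le \widehat{F}_{\x^{t+1}}(\w) + \frac{c_{\w}}{2}\|\w-\bar{\w}\|_2^2$ for all $\w,\bar{\w}\in\mathcal{W}$, which the paper reads as a first-order convexity condition. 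You instead split each summand as $\gamma\bigl(\rho(w_i)+\frac{\sigma_\rho}{2}w_i^2\bigr)+\frac{c_{\w}-\gamma\sigma_\rho}{2}w_i^2$ plus affine terms, and invoke the definition in \ref{assume:p_wc} directly.

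Your route is more elementary. It needs no directional derivatives and no first-order characterization of convexity for nonsmooth functions, and it gives convexity on all of $\mathbb{R}^n$. It also avoids a soft spot in the paper's write-up, which specializes to $\bar{\w}=\w^t$. At that single base point the inequality is only a supporting inequality for $\widehat{G}_{\x^{t+1}}(\cdot;\w^t)$ at $\w^t$, and that does not by itself give convexity. What actually yields convexity is the inequality at every $\bar{\w}$: after expanding $\frac{c_{\w}}{2}\|\w-\w^t\|_2^2$ about $\bar{\w}$, it is exactly the first-order condition for $\widehat{G}_{\x^{t+1}}(\cdot;\w^t)$ at $\bar{\w}$.

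The paper's formulation has the mild advantage of matching the directional-derivative language of the later stationarity analysis. For this lemma, though, your decomposition is the cleaner proof.
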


\subsection{Convergence analysis}
We show the convergence of our method in \eqref{alg:subproblems} for the $\x$- and $\w$-subproblem approximations given in Section~\ref{sec:MM}. 
Theorem~\ref{thm:conv} below summarizes our results.

\begin{theorem}\label{thm:conv}
Let assumptions \ref{assume:L_dc} - \ref{assume:L_lipgrad} and \ref{assume:p_wc}-\ref{assume:p_foldedconcave} hold. Consider the sequences $\{ (\x^t, \w^t) \}_{t=1}^\infty$ generated by \eqref{alg:subproblems} where one of the following holds for the $\x$-subproblem:
\begin{enumerate}[label=\textup{(\roman*)}]
    \item \label{thm:i}\ref{assume:L_dc} holds and $\widehat{G}_{\w^t}(\x; \x^t)$ is defined as \eqref{MM:dc} for some $c_{\x} > 0$; 
    \item \label{thm:ii} \ref{assume:L_lipgrad} holds and $\widehat{G}_{\w^t}(\x; \x^t)$ is defined as \eqref{MM:lipgrad} for some $c_{\x} > \lip > 0$;
    \item \label{thm:iii}  \ref{assume:L_dc} with \eqref{Lcve_Lipgrad} holds and 
    $\widehat{G}_{\w^t}(\x; \x^t)$ is defined as \eqref{MM:wc} for some $c_{\x} > 0$,
\end{enumerate}
    and $\widehat{G}_{\x^{t+1}} (\w; \w^t)$ is defined in \eqref{MM:wc_W} for some $c_{\w} > 0$. Then the sequence $\{\h (\h x^t,\h w^t)\}_{t=1}^\infty$ achieves the following properties:
\begin{enumerate}[label=\textup{(\alph*)}]
\item \label{thm:a}
      For every $t \geq 0$,
\begin{equation}
      F(\h x^t, \h w^t) -F(\h x^{t+1}, \h w^{t+1})
      \geq
      \frac{\bar{c}_{\h x}}{2}\,\bigl\|\h x^{t+1}-\h x^t\bigr\|_2^{2}
      +\frac{c_\h w}{2}\,\bigl\|\h w^{t+1}-\h w^t\bigr\|_2^{2},
      \label{eq:monodec}
\end{equation}
      for some nonnegative $\bar{c}_{\h x}$, and the sequence $\{ F(\h x^t, \h w^t ) \}_{t=1}^\infty$ converges;
\item \label{thm:b} We have
      \[
         \lim_{t\to\infty}\| (\h x^{t+1}, \h w^{t+1}) - (\h x^t, \h w^t) \|_2=0;
      \]
\item  The sequence $\{ (\h x^t,\h w^t) \}_{t=1}^\infty$ has a convergent subsequence denoted by $(\x^{t_j},\w^{t_j}) \rightarrow (\x^*, \w^*)$ as $t_j \rightarrow \infty$. For the case \ref{thm:iii}, the accumulation point $(\x^*, \w^*)$ is a directional stationary solution of problem \eqref{prob:mainprob}. 
\end{enumerate}
\end{theorem}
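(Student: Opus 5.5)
The plan is to follow the standard sufficient-decrease argument for block successive upper-bound minimization. Each block step will be shown to decrease $F$ by a quadratic amount. Summing these decreases gives (a) and (b). Compactness then gives a convergent subsequence, and passing to the limit in the optimality conditions of the two convex subproblems gives (c).

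\textbf{Step (a).} For the $\x$-step we compare $\widehat{G}_{\w^t}(\x^{t+1};\x^t)$ with $\widehat{G}_{\w^t}(\x^t;\x^t)=\widehat{F}_{\w^t}(\x^t)$, which holds by Lemmas~\ref{lemma:MM_DC}, \ref{lemma:MM_lipgrad} and \ref{lemma:MM_wc}. We need a sharper form of the majorization than \eqref{MM:cond}, handled case by case:
\begin{enumerate}[label=(\roman*)]
\item In case \ref{thm:i}, the subgradient inequality for $\mathcal{L}^{\rm cve}_{\w^t}$ gives $\widehat{F}_{\w^t}(\x)\le \widehat{G}_{\w^t}(\x;\x^t)-\frac{c_\x}{2}\|\x-\x^t\|^2$.
\item In case \ref{thm:ii}, inequality \eqref{assume:Lip_ineq} gives the same bound with $c_\x-\lip$ in place of $c_\x$.
\item In case \ref{thm:iii}, the bound holds with $c_\x$ by the definition \eqref{MM:wc}.
\end{enumerate}
In each case $\x^{t+1}$ minimizes $\widehat{G}_{\w^t}(\cdot;\x^t)$, so $\widehat{G}_{\w^t}(\x^{t+1};\x^t)\le\widehat{G}_{\w^t}(\x^t;\x^t)$. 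Combining this with the sharper majorization yields
\[
F(\x^t,\w^t)-F(\x^{t+1},\w^t)\ge \tfrac{\bar c_\x}{2}\|\x^{t+1}-\x^t\|^2,
\]
with $\bar c_\x=c_\x$ or $c_\x-\lip$. The $\w$-step is handled identically using \eqref{MM:wc_W}: comparing $\w^{t+1}$ with $\w^t$ gives $F(\x^{t+1},\w^t)-F(\x^{t+1},\w^{t+1})\ge\frac{c_\w}{2}\|\w^{t+1}-\w^t\|^2$. Adding the two inequalities gives \eqref{eq:monodec}. For the convergence of $F$, note that $F$ is bounded below on $\mathcal{X}\times\mathcal{W}$: $\mathcal{L}\ge0$, $R$ is lower bounded, and $\rho$ is continuous on the bounded box. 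Hence the monotone sequence $\{F(\x^t,\w^t)\}$ converges.

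\textbf{Step (b).} Telescoping \eqref{eq:monodec} shows that $\sum_t \|\x^{t+1}-\x^t\|^2$ and $\sum_t\|\w^{t+1}-\w^t\|^2$ are finite, since both coefficients are positive. The successive differences therefore tend to zero.

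\textbf{Step (c).} The set $\mathcal{X}\times\mathcal{W}$ is compact, so a subsequence $(\x^{t_j},\w^{t_j})$ converges to some $(\x^*,\w^*)$. By (b), $(\x^{t_j+1},\w^{t_j+1})\to(\x^*,\w^*)$ as well. In case \ref{thm:iii} with $c_\x\ge\lipLcve$ and $c_\w\ge\gamma\sigma_\rho$, the subproblem objectives are convex by Lemmas~\ref{lemma:MM_wc} and \ref{lemma:MM_wc_w}. Moreover, each is $\widehat F$ plus a proximal term. Hence $\x^{t+1}$ minimizes $\widehat{F}_{\w^t}(\x)+\frac{c_\x}{2}\|\x-\x^t\|^2$ over $\mathcal{X}$, i.e.
\[
\widehat F_{\w^t}(\x^{t+1})+\tfrac{c_\x}{2}\|\x^{t+1}-\x^t\|^2\le \widehat F_{\w^t}(\x)+\tfrac{c_\x}{2}\|\x-\x^t\|^2 \quad \forall\,\x\in\mathcal{X},
\]
and analogously for $\w^{t+1}$ with $\x^{t+1}$ fixed. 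We pass $t=t_j\to\infty$ in this inequality. Continuity of $F$ on the compact set is needed here; it holds because the DC and weakly convex components are finite and convex on open neighbourhoods, hence continuous. The limit shows that $\x^*$ minimizes $F_{\w^*}(\x)+\frac{c_\x}{2}\|\x-\x^*\|^2$ over $\mathcal X$, and that $\w^*$ minimizes $F_{\x^*}(\w)+\frac{c_\w}{2}\|\w-\w^*\|^2$ over $\mathcal{W}$. Since the quadratic term has zero directional derivative at its center, $F'_{\w^*}(\x^*;\x-\x^*)\ge0$ and $F'_{\x^*}(\w^*;\w-\w^*)\ge0$. Lemma~\ref{lem:d_deriv} then gives $F'((\x^*,\w^*);(\x,\w)-(\x^*,\w^*))\ge 0$, which is d-stationarity.

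\textbf{Main obstacle.} The delicate point is the limit passage in (c). It requires uniform control of $\widehat F_{\w^t}$ as $\w^t\to\w^*$. Since $\mathcal{L}$ is linear in $\w$ and $\ell$ is continuous on the compact $\mathcal X$, this should follow from joint continuity of $F$. Convexity of the subproblems guarantees that $\x^{t+1}$ is a global minimizer of its surrogate, which is what makes the limiting inequality hold for every $\x\in\mathcal{X}$. If $c_\x$ is below the convexifying threshold, I would instead use a global-minimizer interpretation of $\argmin$ in \eqref{alg:subproblems}, which still delivers the same limiting inequality.
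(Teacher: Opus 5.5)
Your proposal is correct and follows essentially the same route as the paper. The sharpened majorization in each of the three cases gives the sufficient decrease \eqref{eq:monodec}, and telescoping gives (b). For (c), compactness, passage to the limit in the proximal optimality inequalities of the two subproblems, and Lemma~\ref{lem:d_deriv} give d-stationarity in case (iii). Your separate limit in each block's inequality is equivalent to the paper's step of first chaining the two inequalities.

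One ingredient is superfluous: convexity via $c_{\x}\geq\lipLcve$ and $c_{\w}\geq\gamma\sigma_{\rho}$, which the theorem does not assume. As the paper remarks, the limit argument only uses that $\x^{t+1}$ and $\w^{t+1}$ are global minimizers of their surrogates, which is already what $\arg\min$ in \eqref{alg:subproblems} means.
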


Note that for cases \ref{thm:i} and \ref{thm:ii},  $(\x^*, \w^*)$ is a directional stationary solution if $F_{\w^*}'(\h x^*; \h x - \h x^*) \geq 0 \ \forall \h x \in \mathcal{X}$ and $F_{\x^*}'(\h w^*; \h w - \h w^*) \geq 0 \ \forall \h w \in \mathcal{W}$. The proof for case \ref{thm:iii} does not require the assumptions \ref{assume:L_dc} and \eqref{Lcve_Lipgrad}, but these conditions ensure the $\x$-subproblem in \eqref{alg:subproblems} is convex.

\section{Numerical Experiments} \label{sec:numerical_experiment}
We conduct three experiments to evaluate the performance of our method in \eqref{prob:mainprob} under different data contamination scenarios. In Section~\ref{sec:reg_experiments} and Section~\ref{sec:nn_experiments}, we investigate linear regression and binary classification using synthetic data that embed outliers following a different data distribution and contain mislabeled classes in the training set, respectively.  Section~\ref{sec:rec_experiments} builds a recommender system on the Netflix Prize data \cite{bennett2007netflix} with random noise and under an adversarial attack.
In all experiments, we compare our method, denoted as SWAPE, with suitable existing benchmark methods, selected for each experiment. Further details on data generation, subproblem derivations, comparison methods, hyperparameter selection, and implementation for each experiment are provided in Appendices \ref{app:reg}-\ref{app:rec}. All tables presented in this section report average values over replications, with their standard deviations shown in parentheses.

\subsection{Linear regression with vertical and interior-clustered contaminations}
\label{sec:reg_experiments}
In this section, we analyze the performance of \eqref{prob:mainprob} on linear regression with potentially contaminated data. Given a data matrix \(A\in\mathbb{R}^{n\times d}\) and the response vector \(\h b\in\mathbb{R}^n\), the problem \eqref{prob:mainprob} can be written as
\begin{align}
\label{prob:mainreg}
\minimize_{\h x\in\mathbb{R}^d,\;\h w\in\mathbb{R}_+^n}\quad
F^{\rm reg}(\h x,\h w)
= \frac12\|\h w^{1/2}\odot (\h b-A\x)\|_2^2
+ \lambda R(\x)
+\gamma\sum_{i=1}^n \rho(w_i),
\end{align}
with the least squares loss in the first term. We choose $\rho$ to be the minimax concave penalty (MCP) \cite{zhang2010nearly} function, which is horizontally shifted to promote $w_i$ taking a value of 1. Since our setting considers a lower-dimensional problem, i.e., $d < n$, we set $R(\x) = 0$. We implement the
\eqref{alg:subproblems} to solve \eqref{prob:mainreg} using surrogates \eqref{MM:wc} and \eqref{MM:wc_W} for $\x$ and $\w$ subproblems, respectively. We derive a closed-form solution for both $\x$ and $\w$ subproblems so that each iteration can be computed efficiently.

SWAPE is compared to ordinary least squares (OLS), least absolute deviation (LAD), and Huber loss \cite{Huber1964} functions, as well as methods designed for training a model performing data selection, such as least trimmed squares (LTS) \cite{Rousseeuw1987} and $\Theta$-IPOD \cite{She2011}. The hyperparameters of our method and $\Theta$-IPOD are tuned using cross-validation, whereas LTS selects a fixed amount of samples at $75$\% to train the model.

\subsubsection{Experiment setting}
We test all models on three types of synthetic data: data without contamination, vertical contamination, and interior-clustered contamination. 
For the first case, we generate a matrix $A \in \mathbb{R}^{n \times d}$ whose entries follow \(A_{ij}\sim \mathcal{N}(\mu_A,\sigma_A^2)\). For this setting, we use
\(\mu_A=7.5\) and \(\sigma_A=4.0\).
Using the ground-truth vector \(\x^{ *} = 5\mathbf{1}_{d}\) and Gaussian noise $\bm{\varepsilon} \in \mathbb{R}^{n}$, the response vector \(\h b\in\mathbb{R}^{n}\) is generated according to
\begin{align}\label{data_generation:linear}
    \h b = A\x^{ *}+\bm{\varepsilon}
    \quad \text{ where } \quad
    \bm{\varepsilon} \sim \mathcal{N}(0,I_{n}).
\end{align}
For the vertical and interior-clustered contamination cases, we follow the data generation process in \cite{WISNOWSKI2001351} to include outlier points in the training set. In both settings, the training set contains $(1-\alpha)$ proportion of observations generated by \eqref{data_generation:linear} for some $\alpha \in (0,1)$ and an
$\alpha$ proportion following a different generation procedure, thereby synthetically creating data contamination. 
Our experiments vary the contamination level as $\alpha = 0.2, 0.3, 0.4$, 
from less contaminated to more contaminated cases. 
In addition, a parameter $\tau > 0$ is used to vary the distance between the (former) uncontaminated and (latter) contaminated data points, measuring the severity of contamination.
We include Figure~\ref{fig:residuals_true_fitted} to illustrate the two types of contamination considered in our experiments.
For each setting, we test on the severity levels $\tau = 5, 8$. We denote $\mathcal{I}_c$ as an index set that contains the described contaminated data points, i.e., $|\mathcal{I}_c| = \alpha n$. 

For each ground truth $\x^{ *}$, we generate a set of training instances $(A^{\rm tr}, \h b^{\rm tr}) \in \mathbb{R}^{600 \times 6} \times \mathbb{R}^{600}$ following the described procedure, and solve the resulting training problem \eqref{prob:mainreg} to obtain a solution $(\widehat{\x}, \widehat{\w})$. On the other hand, the testing set $(A^{\rm te}, \h b^{\rm te}) \in \mathbb{R}^{5000 \times 6} \times \mathbb{R}^{5000}$ is generated only using \eqref{data_generation:linear}. We conduct \(200\) independent replications for each setting, and report the results as means with standard deviations. 
\begin{figure}[t!]
    \centering    \includegraphics[width=0.9\linewidth]{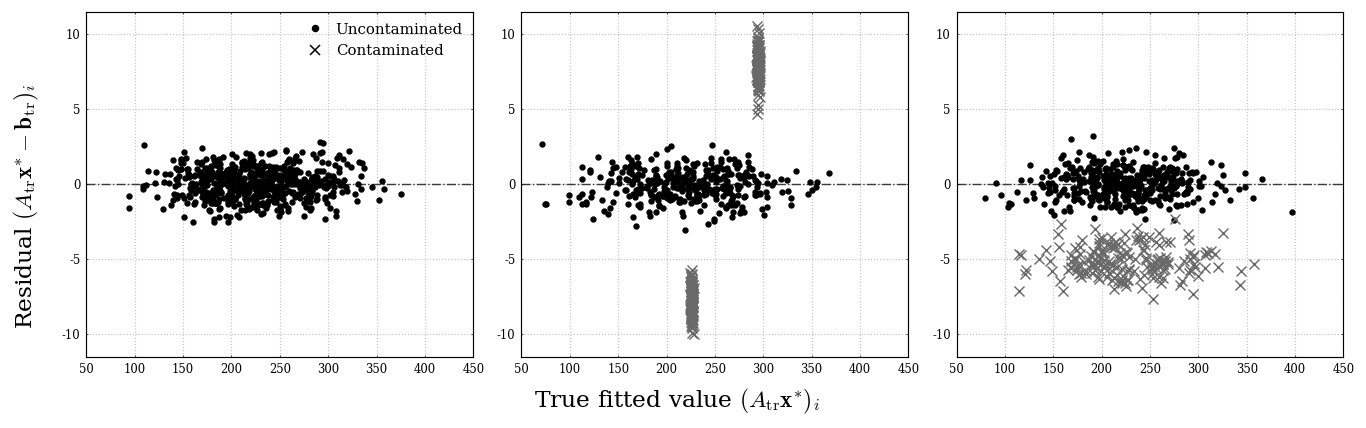}
    \caption{Residuals versus true fitted values from the first replication. From left to right: no contamination; interior-clustered contamination with two clouds for $\alpha=0.4$;  
    and vertical contamination with $\alpha=0.3$. 
    }
    \label{fig:residuals_true_fitted}
\end{figure}
To report the results, we consider the following three metrics to quantitatively evaluate the performance of the solution $\widehat{x}$ of each method:
\begin{align*}
\mathrm{MSE}
=
\frac{1}{5000}
\left\|
A^{\mathrm{te}}\widehat{\x}
-
{\h b}^{\mathrm{te}}
\right\|_2^2, \
\mathrm{Signal\ MSE} 
=
\frac{1}{5000}
\left\|
A^{\mathrm{te}}
\left(
\widehat{\x}-\x^{ *}
\right)
\right\|_2^2, 
\begin{array}{l@{}}
    \text{Estimation} \\
    \text{Error}
\end{array}
=
\left\|
\widehat{\x}-\x^{ *}
\right\|_2.
\end{align*}
The mean squared error (MSE) and signal MSE capture the prediction error of $\widehat{\x}$ measured by the response vector and ground truth, respectively. The estimation error computes the distance between the estimated solution and the ground truth.

In addition, we examine the effect of the weight, $\widehat{\w}$, in identifying and downweighting potentially contaminated data in the training set. Let $n_{\rm tr}$ denote the number of training data points. We compare the index set, $\mathcal{I}_c$, that includes the true contaminated data points, to the set of indices,
\begin{align} \label{set:Ihat} 
    \widehat{\mathcal{I}}_c \triangleq \{i \in \{ 1, \dots, n_{\rm tr} \} \ \vert \ \widehat{w_i} = 0 \}
\end{align}
which indicates the data points with zero weights.
Comparing the elements in the two sets allows us to compute 
\begin{align} \label{metric:recall}
\mathrm{Recall}
=
\frac{|\widehat{\mathcal{I}}_c\cap \mathcal{I}_c|}
{|\mathcal{I}_c|},
\quad
\mathrm{Precision}
=
\frac{|\widehat{\mathcal{I}}_c\cap \mathcal{I}_c|}
{|\widehat{\mathcal{I}}_c|}, 
\quad \text{and} \quad 
\mathrm{F1\text{-}score}
=
\frac{2\,\mathrm{Precision}\,\mathrm{Recall}}
{\mathrm{Precision}+\mathrm{Recall}}.
\end{align}
Precision is the fraction of observations identified as contaminated that are truly contaminated, while recall is the fraction of truly contaminated observations that are correctly identified.
The F1-score measures the overall performance by combining precision and recall.
Lastly, we report the percentage of data points utilized during our method's training, measured by
\begin{align} \label{metric:sparsity}
    \text{Retained } =  100 \left( 1 - \frac{| \widehat{\mathcal{I}_c} |}{n_{\rm tr}} \right) 
\end{align}
For LTS and \(\Theta\)-IPOD, the set $\widehat{\mathcal{I}_c}$ includes the data points omitted by the corresponding method. For LTS, the trimmed observations are identified as contaminated, whereas for \(\Theta\)-IPOD, an observation is identified as contaminated when its estimated mean shift parameter is nonzero.

\subsubsection{Results}
\label{sec:reg-results}

We report results for the case of no contamination in Table~\ref{tab:reg_clean}. All methods exhibit similar estimation performance, with OLS and Huber performing slightly better overall. We observe that SWAPE remains competitive despite retaining roughly half of the observations, while LTS trained with $75$\% of the data yields higher MSE, signal MSE, and estimation error. $\Theta$-IPOD achieves performance closer to OLS and Huber by using about $95$\% of the data.
\begingroup
    \renewcommand{\bfseries}{\relax} 
    
\begin{table}[H]
\scriptsize
\renewcommand{\arraystretch}{1.1}
\setlength{\tabcolsep}{4pt}
\centering
\caption{Results for linear regression with uncontaminated data. }
\label{tab:reg_clean}
\begin{tabular}{@{} l c c c c @{}}
\toprule
\textbf{Method} & \textbf{MSE} & \textbf{Signal MSE} & \textbf{Estimation Error} & \textbf{Retained} \\
\midrule
SWAPE & 1.020 (0.026) & 0.020 (0.014) & 0.030 (0.011) & 50.29 \\
$\Theta$-IPOD & 1.014 (0.024) & 0.014 (0.011) & 0.025 (0.009) & 94.98 \\
OLS & \textbf{1.010 (0.022)} & \textbf{0.010 (0.005)} & \textbf{0.022 (0.006)} & --  \\
LAD & 1.016 (0.024) & 0.016 (0.009) & 0.028 (0.008) & --  \\
Huber & 1.011 (0.023) & 0.011 (0.006) & 0.023 (0.007) & --  \\
LTS & 1.030 (0.029) & 0.030 (0.018) & 0.037 (0.012) & 75.00  \\
\bottomrule
\end{tabular}
\end{table}
\endgroup 

Table~\ref{tab:vertical_full} reports the results for the vertical contamination setting. Our method exhibits stable prediction accuracy across all contamination and severity settings, with MSE remaining close to $1.03$, while the benchmark methods' performance deteriorates sharply as the dataset embeds more vertical outliers. For instance, when $\tau = 5$, LTS and $\Theta$-IPOD obtain MSE values around $1.02$ for $\alpha = 0.2$. These values increase to $2.01$ and $4.21$, respectively, when the contamination level rises to $\alpha = 0.4$. The different performance of our model and the two benchmark models can be attributed to the percentage of retained data. The three models use about $74.1$--$79.8$\% of data and achieve high F1 scores between $0.86$--$0.95$ when $\alpha = 0.2$. As $\alpha$ values increase to $0.3$ and $0.4$, our method adaptively uses $64.7$\% and $54.4$\% of the data points. On the other hand, while $\Theta$-IPOD adapts and utilizes $70.4$\% of data for $\alpha = 0.3$, but its retained value of $94.7$\% significantly exceeds the nominal contamination rate when $\alpha = 0.4$. Using a fixed $75$\% of the data, LTS performs comparably only when the contamination level matches $\alpha = 0.3$. Consequently, the recall values for $\Theta$-IPOD and LTS drop substantially for $\alpha = 0.4$, resulting in F1 scores of $0.21$ and $0.76$, respectively. In contrast, our method achieves an F1 score of $0.93$. OLS is highly sensitive to vertical contamination. LAD and Huber offer greater robustness than OLS but consistently produce larger prediction and estimation errors than the SWAPE. We observe a similar trend for $\tau = 8$ when the severity of vertical contamination increases.

\begingroup 
    \renewcommand{\bfseries}{\relax} 
    
\begin{table}[h!]
\scriptsize
\centering
\renewcommand{\arraystretch}{1.1}
\setlength{\tabcolsep}{3pt}
\caption{Results for linear regression on data with vertical contamination. Parameters $\alpha$ and $\tau$, indicating the percentage and severity of contamination, are annotated for each scenario.}
\label{tab:vertical_full}
\begin{tabular}{@{}l l c c c c c c c@{}}
\toprule
\textbf{Setting} &
\textbf{Method} &
\textbf{MSE} &
\textbf{\begin{tabular}[c]{@{}c@{}}Signal\\MSE\end{tabular}} &
\textbf{\begin{tabular}[c]{@{}c@{}}Estimation\\Error\end{tabular}} &
\textbf{Retained} &
\textbf{Precision} &
\textbf{Recall} &
\textbf{F1} \\
\midrule
\multirow{6}{*}{\shortstack[l]{$\alpha = 0.2$\\$\tau = 5$}} & SWAPE & 1.033 (0.038) & 0.033 (0.032) & 0.034 (0.013) &74.12 & 0.804 (0.189) & 0.964 (0.064) & 0.859 (0.111) \\
 & $\Theta$-IPOD & 1.024 (0.031) & 0.023 (0.023) & \textbf{0.030 (0.011)} & 79.79 & \textbf{0.953 (0.060)} & 0.957 (0.062) & \textbf{0.952 (0.036)} \\
 & OLS & 2.003 (0.105) & 1.001 (0.099) & 0.072 (0.010) & -- & -- & -- & -- \\
 & LAD & 1.131 (0.050) & 0.130 (0.045) & 0.040 (0.011) & -- & -- & -- & -- \\
 & Huber & 1.193 (0.049) & 0.192 (0.046) & 0.041 (0.009) & -- & -- & -- & -- \\
 & LTS & \textbf{1.020 (0.023)} & \textbf{0.020 (0.012)} & 0.030 (0.010) & 75.00 & 0.799 (0.003) & \textbf{0.999 (0.003)} & 0.888 (0.003) \\
\cmidrule(lr){1-9}
\multirow{6}{*}{\shortstack[l]{$\alpha = 0.3$\\$\tau = 5$}} & SWAPE & 1.035 (0.035) & 0.034 (0.027) & 0.037 (0.014) & 64.74 & 0.862 (0.139) & \textbf{0.980 (0.038)} & 0.909 (0.078) \\
 & $\Theta$-IPOD & \textbf{1.021 (0.027)} & \textbf{0.021 (0.016)} & \textbf{0.030 (0.011)} & 70.38 & 0.987 (0.011) & 0.974 (0.029) & \textbf{0.980 (0.014)} \\
 & OLS & 3.202 (0.155) & 2.201 (0.150) & 0.096 (0.010) & -- & -- & -- & -- \\
 & LAD & 1.353 (0.085) & 0.352 (0.082) & 0.054 (0.012) & -- & -- & -- & -- \\
 & Huber & 1.960 (0.153) & 0.959 (0.148) & 0.075 (0.014) & -- & -- & -- & -- \\
 & LTS & 1.104 (0.043) & 0.103 (0.036) & 0.054 (0.016) & 75.00  & \textbf{0.999 (0.003)} & 0.833 (0.003) & 0.908 (0.003) \\
\cmidrule(lr){1-9}
\multirow{6}{*}{\shortstack[l]{$\alpha = 0.4$\\$\tau = 5$}} & SWAPE & \textbf{1.033 (0.032)} & \textbf{0.032 (0.023)} & \textbf{0.037 (0.014)} & 54.40 & 0.885 (0.105) & \textbf{0.992 (0.016)} & \textbf{0.931 (0.059)} \\
 & $\Theta$-IPOD & 4.208 (0.468) & 3.205 (0.468) & 0.131 (0.018) & 94.70 &  0.834 (0.306) & 0.125 (0.077) & 0.213 (0.122) \\
 & OLS & 4.865 (0.213) & 3.862 (0.201) & 0.119 (0.010) & -- & -- & -- & -- \\
 & LAD & 2.108 (0.235) & 1.106 (0.233) & 0.091 (0.021) & -- & -- & -- & -- \\
 & Huber & 4.438 (0.222) & 3.435 (0.212) & 0.120 (0.012) & -- & -- & -- & -- \\
 & LTS & 2.010 (0.221) & 1.008 (0.218) & 0.153 (0.035) & 75.00  & \textbf{0.987 (0.014)} & 0.617 (0.009) & 0.759 (0.011) \\ 
\midrule \midrule
\multirow{6}{*}{\shortstack[l]{$\alpha = 0.20$\\$\tau = 8$}} & SWAPE & 1.033 (0.033) & 0.032 (0.027) & 0.035 (0.015) & 71.79 & 0.770 (0.210) & 0.991 (0.024) & 0.848 (0.139) \\
 & $\Theta$-IPOD & \textbf{1.015 (0.023)} & \textbf{0.015 (0.009)} & \textbf{0.027 (0.009)} & 78.71 & \textbf{0.947 (0.062)} & 1.000 (0.002) & \textbf{0.972 (0.034)} \\
 & OLS & 3.565 (0.184) & 2.564 (0.178) & 0.114 (0.017) & -- & -- & -- & -- \\
 & LAD & 1.134 (0.045) & 0.133 (0.041) & 0.041 (0.011) & -- & -- & -- & -- \\
 & Huber & 1.198 (0.050) & 0.197 (0.045) & 0.043 (0.010) & -- & -- & -- & -- \\
 & LTS & 1.019 (0.024) & 0.019 (0.010) & 0.030 (0.009) & 75.00  & 0.800 (0.000) & \textbf{1.000 (0.000)} & 0.889 (0.000) \\
\cmidrule(lr){1-9}
\multirow{6}{*}{\shortstack[l]{$\alpha = 0.30$\\$\tau = 8$}} & SWAPE & 1.031 (0.030) & 0.030 (0.023) & 0.035 (0.013) & 63.69 & 0.847 (0.134) & 0.996 (0.012) & 0.909 (0.081) \\
 & $\Theta$-IPOD & \textbf{1.016 (0.022)} & \textbf{0.015 (0.009)} & \textbf{0.026 (0.009)} & 69.81 & 0.994 (0.009) & \textbf{1.000 (0.002)} & \textbf{0.997 (0.005)} \\
 & OLS & 6.646 (0.282) & 5.643 (0.276) & 0.153 (0.018) & -- & -- & -- & -- \\
 & LAD & 1.362 (0.084) & 0.361 (0.081) & 0.056 (0.013) & -- & -- & -- & -- \\
 & Huber & 2.032 (0.173) & 1.030 (0.170) & 0.079 (0.017) & -- & -- & -- & -- \\
 & LTS & 1.380 (0.111) & 0.377 (0.109) & 0.108 (0.027) & 75.00  & \textbf{1.000 (0.000)} & 0.833 (0.000) & 0.909 (0.000) \\
\cmidrule(lr){1-9}
\multirow{6}{*}{\shortstack[l]{$\alpha = 0.40$\\$\tau = 8$}} & SWAPE & \textbf{1.032 (0.031)} & \textbf{0.032 (0.022)} & \textbf{0.038 (0.013)} & 54.58 & 0.891 (0.098) & \textbf{0.999 (0.005)} & \textbf{0.939 (0.056)} \\
 & $\Theta$-IPOD & 4.725 (3.644) & 3.726 (3.642) & 0.171 (0.137) & 75.19 & 0.983 (0.074) & 0.618 (0.371) & 0.693 (0.306) \\
 & OLS & 10.950 (0.413) & 9.947 (0.410) & 0.191 (0.014) & -- & -- & -- & -- \\
 & LAD & 2.144 (0.272) & 1.143 (0.270) & 0.092 (0.024) & -- & -- & -- & -- \\
 & Huber & 10.156 (0.422) & 9.152 (0.421) & 0.193 (0.019) & -- & -- & -- & -- \\
 & LTS & 4.310 (0.438) & 3.309 (0.435) & 0.318 (0.040) & 75.00  & \textbf{0.997 (0.007)} & 0.623 (0.004) & 0.767 (0.005) \\
\bottomrule
\end{tabular}
\end{table}
\endgroup

Lastly, Table~\ref{tab:interior_full} summarizes the performance under interior-clustered contamination. We first discuss the lower-severity case at $\tau = 5$. When $\alpha = 0.2$, SWAPE, $\Theta$-IPOD, and LTS outperform the other regression methods, with MSE values around $1.02$ while retaining $71.7$-$78.5$\% of the data. As $\alpha$ increases to $0.3$ and $0.4$, our model reduces the retained data to $63.0$\% and $56.6$\% while maintaining strong prediction performance across metrics. The LAD also shows robust performance across all levels of contamination that is comparable to the SWAPE.
In contrast, the performance of $\Theta$-IPOD and LTS declines as $\alpha$ increases and eventually yields higher MSE, signal MSE, and estimation error than all regression benchmark methods at $\alpha = 0.4$. We observe that our model's performance improves when $\tau = 8$, while the other methods seem to suffer in all prediction metrics.

\begingroup 
    \renewcommand{\bfseries}{\relax} 
    
\begin{table}[!htbp]
\tiny
\centering
\renewcommand{\arraystretch}{1.0}
\setlength{\tabcolsep}{3pt}
\caption{\small Results for linear regression on data with interior-clustered contamination. Parameters $\alpha$ and $\tau$, indicating the percentage and severity of contamination, are annotated for each scenario.}
\label{tab:interior_full}

\begin{tabular}{@{}l l c c c c c c c@{}}
\toprule
\textbf{Setting} & \textbf{Method} & \textbf{MSE} & \textbf{\begin{tabular}[c]{@{}c@{}}Signal\\MSE\end{tabular}} &
\textbf{\begin{tabular}[c]{@{}c@{}}Estimation\\Error\end{tabular}} &
\textbf{Retained} & \textbf{Precision} & \textbf{Recall} & \textbf{F1} \\
\midrule
\multirow{6}{*}{\shortstack[l]{$\alpha = 0.2$\\$\tau = 5$\\1 cloud}} & SWAPE & \textbf{1.084 (0.307)} & \textbf{0.084 (0.311)} & 0.044 (0.050) & 70.86 & 0.732 (0.222) & 0.967 (0.121) & 0.814 (0.167) \\
 & $\Theta$-IPOD & 1.127 (0.414) & 0.126 (0.413) & 0.045 (0.068) & 79.08 & \textbf{0.862 (0.185)} & 0.928 (0.233) & \textbf{0.887 (0.216)} \\
 & OLS & 2.503 (0.180) & 1.500 (0.173) & 0.247 (0.040) & -- & -- & -- & -- \\
 & LAD & 1.529 (0.360) & 0.527 (0.356) & 0.145 (0.060) & -- & -- & -- & -- \\
 & Huber & 1.854 (0.421) & 0.852 (0.416) & 0.186 (0.064) & -- & -- & -- & -- \\
 & LTS & 1.100 (0.414) & 0.100 (0.415) & \textbf{0.041 (0.061)} & 75.00 & 0.774 (0.127) & \textbf{0.967 (0.159)} & 0.860 (0.141) \\
\cmidrule(lr){1-9}
\multirow{6}{*}{\shortstack[l]{$\alpha = 0.3$\\$\tau = 5$\\1 cloud}} & SWAPE & \textbf{3.014 (2.611)} & \textbf{2.013 (2.609)} & \textbf{0.220 (0.230)} & 61.94 & \textbf{0.520 (0.349)} & \textbf{0.667 (0.421)} & \textbf{0.578 (0.374)} \\
 & $\Theta$-IPOD & 4.262 (1.634) & 3.258 (1.633) & 0.358 (0.152) & 87.79 & 0.238 (0.351) & 0.162 (0.333) & 0.175 (0.332) \\
 & OLS & 3.611 (0.276) & 2.605 (0.270) & 0.323 (0.042) & -- & -- & -- & -- \\
 & LAD & 3.463 (1.137) & 2.458 (1.139) & 0.318 (0.105) & -- & -- & -- & -- \\
 & Huber & 3.719 (0.504) & 2.713 (0.504) & 0.336 (0.061) & -- & -- & -- & -- \\
 & LTS & 4.282 (2.889) & 3.279 (2.890) & 0.328 (0.233) & 75.00 & 0.452 (0.470) & 0.377 (0.392) & 0.411 (0.427) \\
\cmidrule(lr){1-9}
\multirow{6}{*}{\shortstack[l]{$\alpha = 0.4$\\$\tau = 5$\\1 cloud}} & SWAPE & 8.825 (3.813) & 7.825 (3.812) & 0.576 (0.173) & 60.54 & \textbf{0.175 (0.189)} & \textbf{0.204 (0.238)} & \textbf{0.186 (0.210)} \\
 & $\Theta$-IPOD & 7.979 (2.301) & 6.977 (2.299) & 0.558 (0.098) & 82.98 & 0.034 (0.060) & 0.011 (0.009) & 0.016 (0.013) \\
 & OLS & \textbf{4.692 (0.494)} & \textbf{3.690 (0.489)} & \textbf{0.383 (0.041)} & -- & -- & -- & -- \\
 & LAD & 6.462 (1.251) & 5.462 (1.252) & 0.483 (0.058) & -- & -- & -- & -- \\
 & Huber & 5.522 (0.662) & 4.521 (0.660) & 0.435 (0.046) & -- & -- & -- & -- \\
 & LTS & 9.264 (3.291) & 8.262 (3.295) & 0.609 (0.112) & 75.00 & 0.052 (0.100) & 0.032 (0.062) & 0.040 (0.077) \\
\midrule \midrule
\multirow{6}{*}{\shortstack[l]{$\alpha = 0.20$\\$\tau = 8$\\1 cloud}} & SWAPE & \textbf{1.029 (0.031)} & \textbf{0.029 (0.023)} & \textbf{0.035 (0.014)} & 70.94 & 0.740 (0.192) & \textbf{0.997 (0.015)} & 0.835 (0.130) \\
 & $\Theta$-IPOD & 1.367 (1.252) & 0.367 (1.253) & 0.062 (0.126) & 79.15 & \textbf{0.846 (0.238)} & 0.927 (0.258) & \textbf{0.883 (0.246)} \\
 & OLS & 4.856 (0.436) & 3.855 (0.431) & 0.402 (0.059) & -- & -- & -- & -- \\
 & LAD & 1.638 (0.659) & 0.638 (0.663) & 0.158 (0.076) & -- & -- & -- & -- \\
 & Huber & 2.326 (1.108) & 1.326 (1.110) & 0.227 (0.110) & -- & -- & -- & -- \\
 & LTS & 1.110 (0.740) & 0.109 (0.744) & 0.037 (0.065) & 75.00 & 0.788 (0.095) & 0.985 (0.119) & 0.876 (0.106) \\
\cmidrule(lr){1-9}
\multirow{6}{*}{\shortstack[l]{$\alpha = 0.30$\\$\tau = 8$\\1 cloud}} & SWAPE & \textbf{2.136 (3.274)} & \textbf{1.135 (3.272)} & \textbf{0.114 (0.217)} & 58.10 & \textbf{0.691 (0.238)} & \textbf{0.920 (0.249)} & \textbf{0.782 (0.236)} \\
 & $\Theta$-IPOD & 10.359 (4.574) & 9.358 (4.567) & 0.614 (0.260) & 86.21 & 0.136 (0.328) & 0.127 (0.331) & 0.128 (0.329) \\
 & OLS & 7.605 (0.626) & 6.603 (0.626) & 0.514 (0.067) & -- & -- & -- & -- \\
 & LAD & 6.778 (4.137) & 5.777 (4.139) & 0.473 (0.232) & -- & -- & -- & -- \\
 & Huber & 8.095 (1.584) & 7.093 (1.581) & 0.545 (0.112) & -- & -- & -- & -- \\
 & LTS & 9.242 (7.275) & 8.243 (7.275) & 0.524 (0.372) & 75.00& 0.437 (0.495) & 0.364 (0.413) & 0.397 (0.450)\\
\cmidrule(lr){1-9}
\multirow{6}{*}{\shortstack[l]{$\alpha = 0.40$\\$\tau = 8$\\1 cloud}} & SWAPE & 12.408 (9.314) & 11.416 (9.321) & \textbf{0.606 (0.432)} & 55.17 & \textbf{0.364 (0.371)} & \textbf{0.440 (0.433)} & \textbf{0.396 (0.395)} \\
 & $\Theta$-IPOD & 21.286 (7.938) & 20.299 (7.933) & 0.956 (0.200) & 76.93 & 0.003 (0.005) & 0.002 (0.003) & 0.002 (0.004) \\
 & OLS & \textbf{10.372 (1.316)} & \textbf{9.380 (1.312)} & 0.615 (0.065) & -- & -- & -- & -- \\
 & LAD & 17.180 (5.163) & 16.191 (5.161) & 0.835 (0.137) & -- & -- & -- & -- \\
 & Huber & 13.066 (1.908) & 12.076 (1.904) & 0.719 (0.076) & -- & -- & -- & -- \\
 & LTS & 22.914 (8.380) & 21.928 (8.375) & 1.001 (0.164) & 75.00 & 0.003 (0.007) & 0.002 (0.004) & 0.003 (0.005) \\
\midrule \midrule 
\multirow{6}{*}{\shortstack[l]{$\alpha = 0.20$\\$\tau = 5$\\2 clouds}} & SWAPE & 1.031 (0.034) & 0.031 (0.027) & 0.037 (0.017) & 71.75 & 0.757 (0.202) & 0.976 (0.057) & 0.833 (0.128) \\
 & $\Theta$-IPOD & \textbf{1.018 (0.023)} & \textbf{0.017 (0.010)} & \textbf{0.029 (0.010)} & 78.49 & \textbf{0.916 (0.081)} & 0.976 (0.042) & \textbf{0.942 (0.038)} \\
 & OLS & 1.947 (0.383) & 0.948 (0.382) & 0.235 (0.053) & -- & -- & -- & -- \\
 & LAD & 1.162 (0.127) & 0.161 (0.126) & 0.092 (0.036) & -- & -- & -- & -- \\
 & Huber & 1.254 (0.215) & 0.254 (0.215) & 0.115 (0.048) & -- & -- & -- & -- \\
 & LTS & 1.020 (0.023) & 0.019 (0.010) & 0.030 (0.009) & 75.00 & 0.799 (0.003) & \textbf{0.999 (0.003)} & 0.888 (0.003) \\
\cmidrule(lr){1-9}
\multirow{6}{*}{\shortstack[l]{$\alpha = 0.30$\\$\tau = 5$\\2 clouds}} & SWAPE & \textbf{1.092 (0.491)} & \textbf{0.092 (0.491)} & \textbf{0.045 (0.058)} & 63.01 & 0.820 (0.155) & \textbf{0.979 (0.090)} & 0.885 (0.113) \\
 & $\Theta$-IPOD & 1.557 (1.258) & 0.557 (1.260) & 0.100 (0.154) & 73.40 &  0.842 (0.308) & 0.835 (0.336) & 0.833 (0.323)\\
 & OLS & 3.068 (0.593) & 2.065 (0.589) & 0.351 (0.056) & -- & -- & -- & -- \\
 & LAD & 1.674 (0.678) & 0.673 (0.679) & 0.184 (0.088) & -- & -- & -- & -- \\
 & Huber & 2.259 (0.864) & 1.258 (0.864) & 0.260 (0.100) & -- & -- & -- & -- \\
 & LTS & 1.354 (1.159) & 0.353 (1.159) & 0.099 (0.103) & 75.00 & \textbf{0.961 (0.167)} & 0.801 (0.139) & 0.874 (0.152) \\
\cmidrule(lr){1-9}
\multirow{6}{*}{\shortstack[l]{$\alpha = 0.40$\\$\tau = 5$\\2 clouds}} & SWAPE & 4.134 (5.712) & 3.132 (5.703) & \textbf{0.248 (0.361)} & 56.58 & \textbf{0.664 (0.357)} & \textbf{0.758 (0.400)} & \textbf{0.704 (0.372)} \\
 & $\Theta$-IPOD & 6.657 (4.203) & 5.655 (4.197) & 0.533 (0.237) & 86.58 & 0.349 (0.433) & 0.171 (0.286) & 0.210 (0.322) \\
 & OLS & 4.585 (0.756) & 3.584 (0.757) & 0.465 (0.056) & -- & -- & -- & -- \\
 & LAD & \textbf{3.975 (2.532)} & \textbf{2.974 (2.533)} & 0.384 (0.190) & -- & -- & -- & -- \\
 & Huber & 4.661 (1.436) & 3.660 (1.437) & 0.462 (0.107) & -- & -- & -- & -- \\
 & LTS & 7.527 (6.219) & 6.524 (6.210) & 0.530 (0.311) & 75.00 & 0.559 (0.436) & 0.350 (0.272) & 0.430 (0.335) \\
\midrule \midrule 
\multirow{6}{*}{\shortstack[l]{$\alpha = 0.20$\\$\tau = 8$\\2 clouds}} & SWAPE & 1.030 (0.032) & 0.029 (0.026) & 0.036 (0.016) & 72.39 & 0.778 (0.201) & 0.990 (0.027) & 0.855 (0.131) \\
 & $\Theta$-IPOD & \textbf{1.016 (0.022)} & \textbf{0.015 (0.009)} & \textbf{0.027 (0.009)} & 78.66 & \textbf{0.943 (0.073)} & \textbf{1.000 (0.000)} & \textbf{0.969 (0.040)} \\
 & OLS & 3.357 (0.954) & 2.359 (0.952) & 0.371 (0.084) & -- & -- & -- & -- \\
 & LAD & 1.153 (0.120) & 0.153 (0.117) & 0.090 (0.035) & -- & -- & -- & -- \\
 & Huber & 1.246 (0.246) & 0.246 (0.244) & 0.112 (0.050) & -- & -- & -- & -- \\
 & LTS & 1.020 (0.024) & 0.019 (0.011) & 0.030 (0.010) & 75.00 & 0.800 (0.000) & \textbf{1.000 (0.000)} & 0.889 (0.000) \\
\cmidrule(lr){1-9}
\multirow{6}{*}{\shortstack[l]{$\alpha = 0.30$\\$\tau = 8$\\2 clouds}} & SWAPE & \textbf{1.029 (0.033)} & \textbf{0.028 (0.024)} & \textbf{0.035 (0.016)} & 63.59 & 0.846 (0.135) & \textbf{0.998 (0.009)} & \textbf{0.909 (0.082)} \\
 & $\Theta$-IPOD & 3.297 (4.317) & 2.301 (4.324) & 0.206 (0.315) & 75.62 & 0.750 (0.400) & 0.754 (0.425) & 0.747 (0.416) \\
 & OLS & 6.315 (1.464) & 5.315 (1.466) & 0.564 (0.086) & -- & -- & -- & -- \\
 & LAD & 1.924 (1.518) & 0.923 (1.516) & 0.200 (0.131) & -- & -- & -- & -- \\
 & Huber & 3.487 (2.661) & 2.489 (2.667) & 0.337 (0.201) & -- & -- & -- & -- \\
 & LTS & 2.087 (2.828) & 1.086 (2.826) & 0.186 (0.160) & 75.00 & \textbf{0.962 (0.169)} & 0.802 (0.140) & 0.875 (0.153) \\
\cmidrule(lr){1-9}
\multirow{6}{*}{\shortstack[l]{$\alpha = 0.40$\\$\tau = 8$\\2 clouds}} & SWAPE & \textbf{2.521 (5.909)} & \textbf{1.518 (5.897)} & \textbf{0.113 (0.281)} & 54.31 & \textbf{0.826 (0.224)} & \textbf{0.940 (0.226)} & \textbf{0.876 (0.219)} \\
 & $\Theta$-IPOD & 18.433 (14.803) & 17.409 (14.776) & 0.910 (0.457) & 84.59 & 0.270 (0.432) & 0.152 (0.308) & 0.174 (0.333)\\
 & OLS & 10.193 (2.077) & 9.177 (2.062) & 0.741 (0.092) & -- & -- & -- & -- \\
 & LAD & 8.090 (8.375) & 7.075 (8.364) & 0.536 (0.385) & -- & -- & -- & -- \\
 & Huber & 10.407 (4.471) & 9.390 (4.455) & 0.725 (0.221) & -- & -- & -- & -- \\
 & LTS & 18.487 (16.341) & 17.465 (16.303) & 0.872 (0.492) &75.00 & 0.549 (0.427) & 0.343 (0.267) & 0.422 (0.328) \\
\bottomrule
\end{tabular}
\end{table}
\endgroup

\subsection{Binary classification with flipped labels}
\label{sec:nn_experiments}

This experiment examines binary classification on the synthetic two-moons dataset, available in the Scikit-learn library \cite{scikit-learn}. We use a fully connected, two-hidden-layer feed-forward neural network with softplus activation and sigmoid functions for the hidden and output neurons, respectively, to compute the probability that a data point belongs to one of two classes. We embed the parameters of the network, e.g., edge weights and bias terms, in the variable $\x$, and denote $p_i(\x)$ as the probability of an $i$-th data point belonging to a class $b_i \in \{0,1\}$. Applying the cross-entropy loss, the resulting problem is written as
\begin{align}
\label{prob:mainnn}
\minimize_{\x\in\mathbb{R}^d,\;\w\in\mathbb{R}_+^n}\quad
-\sum_{i=1}^{n} w_i
\left[
b_i\log  p_i(\x)
+
(1- b_i)
\log\!\left(1- p_i(\x)\right)
\right]+\lambda R(\x)
+\gamma\sum_{i=1}^{n}\rho(w_i).
\end{align}
We choose a variant of LASSO \cite{tibshirani96lasso} for $\rho$ and two types of regularizers for $R(\x)$, which will be discussed shortly. 
Our experiment applies 
\eqref{alg:subproblems} to solve \eqref{prob:mainnn} using surrogates \eqref{MM:wc} and \eqref{MM:wc_W} for $\x$ and $\w$ subproblems, respectively. 
We note that the choices of differentiable activation functions used in our experiments result in a weakly convex $\x$-subproblem, provided that $\x$ belongs to a compact constraint set, which in turn helps convexify the problem. However, we do not impose any restriction on $\x$ in our implementation and rely on an existing solver, Adam \cite{kingma2015adam}, to find a stationary solution for the $\x$-subproblem. On the other hand, the $\w$-subproblem is updated using a closed-form solution.

We compare our method to a baseline neural network that solves \eqref{prob:mainnn} only for $\x$ while fixing \(\w=\mathbf{1}_n\). For both the baseline and our models, we consider four training variants: no regularization where $R(\x) = 0$, \(\ell_2\) regularization \cite{krogh1991simple} which penalizes large network parameters by using $R(\x) = \tfrac12 \| \x \|_2^2$, 
early stopping \cite{bengio2012practical, prechelt2002early} which terminates training when validation performance no longer improves, and dropout \cite{srivastava2014dropout} which randomly deactivates hidden units during training. The SWAPE and baseline models use the same network architecture and training configuration within each variant.

\subsubsection{Experiment setting}
\label{sec: nnsetting}

Following the two-moons model \cite{scikit-learn}, we generate two featured data points, each belonging to class $0$ or $1$. We generate a training set $A^{\rm tr} \in \mathbb{R}^{500 \times 2}$ and label $\h b^{\rm tr} \in \{0,1\}^{500}$, and a testing set $A^{\rm te} \in \mathbb{R}^{5000 \times 2}$ and label $\h b^{\rm te} \in \{0,1\}^{5000}$.
This experiment considers a setting in which a subset of training data points has flipped labels. If the $i$-th data point belongs to a set of contaminated indices, e.g., $i \in \mathcal{I}_c$, then we flip and reassign the label of the data point, e.g., from class $0$ to $1$ or vice versa, to introduce label contamination. The test labels remain unchanged. For each contamination proportion $\alpha\in\{0.25,0.30,0.35,0.40,0.45\}$, we conduct $200$ independent replications. Figure~\ref{fig:nn-data2} illustrates the training and test data from a replication with \(\alpha=0.30\). 

\begin{figure}[t]
    \centering    \includegraphics[width=0.92\textwidth]{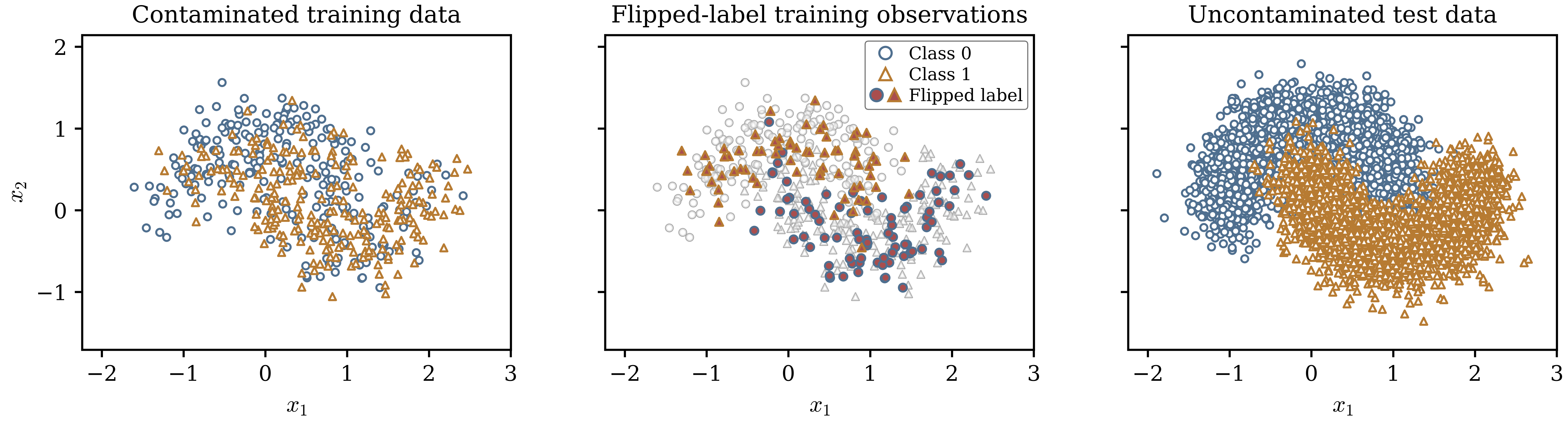}

    \caption{Representative training and test datasets for
    \(\alpha=0.30\) in a replication. 
    Left: the contaminated training set, where circles and triangles denote the observed class labels. Center: the label contamination in the training set, with flipped-label observations highlighted in red and the remaining observations shown in gray. Right: the uncontaminated test set with the true class labels.}
    \label{fig:nn-data2}
\end{figure}
Let us denote $(\widehat{\x}, \widehat{\w})$ as the solution of \eqref{prob:mainnn} given by the training set. Using $\widehat{\x}$, we predict the $i$-th testing data point to belong to class $1$ if $\widehat{p}_i = p_i(\widehat{\x}) \geq 0.5$, and class $0$ otherwise. The resulting predicted label is denoted as $\widehat{b}_i^{\rm te}$. To compare the model performance, we measure the prediction accuracy and the area under the receiver operating characteristic curve (AUC) defined as: 
\begin{align*}
    \text{Accuracy } = \frac{1}{5000} \sum \limits_{i=1}^{5000} \mathbb{I}(b_i^{\rm te} = \widehat{b}^{\rm te}_i) 
    \ \text{and} \ 
    \text{AUC } = \frac{1}{n_0 \, n_1}
    \sum \limits_{i: b_i^{\rm te} = 1}
    \sum \limits_{j: b_j^{\rm te} = 0}
    \left[
    \mathbb{I}(\widehat{p}_i > \widehat{p}_j)
    +\frac{1}{2}
    \mathbb{I}(\widehat{p}_i = \widehat{p}_j)
    \right],
\end{align*}
where $n_0$ and $n_1$ are the number of observations in $\h b^{\rm te}$ in class $0$ and $1$, respectively. The indicator $\mathbb{I}(\bullet)$ returns $1$ if the condition in the parentheses holds. Our results also report the effects of $\widehat{\w}$ in informing data points with flipped labels. Following the linear regression experiments, we report precision, recall, and F1 score, defined in \eqref{metric:recall}, and the retained percentage of training data in \eqref{metric:sparsity}.

\subsubsection{Results}
Table~\ref{tab:nn_classification_performance} reports the classification performance on the test set.
SWAPE outperforms the baseline neural network under most variants and contamination levels. Both accuracy and AUC improve significantly under no additional regularization, \(\ell_2\) regularization, and dropout, while the two models perform similarly under early stopping. The gains are largest without regularization and under \(\ell_2\) regularization, whereas in dropout, the improvement is smaller but remains consistent across contamination levels. For the above three variants, our method's performance is achieved by retaining $73.34$--$76.19$\% of training data when $\alpha = 0.25$. When the contamination level increases to $\alpha = 0.45$, the percentage of retained data decreases to $61.78$--$65.28$\%, reducing the influence of mislabeled training observations.
The precision, recall, and F1 score show that the downweighted observations align reasonably well with the mislabeled samples, with the highest F1 score of $0.828$ obtained by the no-regularization method when $\alpha = 0.25$. As $\alpha$ increases, the F1 scores decrease, indicating the difficulty of identifying flipped labels in the case of a high level of contamination.

In contrast, although early stopping improves the baseline method's accuracy and AUC, this variant limits learning of the network parameters and observation weights in our method. As a result, the trained weights remain close to their initialization, \(\w^0=\mathbf{1}_n\), making early stopping less suitable for SWAPE. For all values of $\alpha$, early stopping uses the entire dataset, i.e., retaining $100$\% of the training data.

Figure~\ref{fig:nn-boundaries} illustrates an example decision boundary of SWAPE and baseline models. In no regularization, \(\ell_2\) regularization, and dropout, our method recovers decision boundaries that align with the two-moons structure, whereas the corresponding baseline boundaries are more irregular. This suggests that the learned weights reduce the influence of mislabeled observations on the decision boundary, allowing the network to better recover the underlying class structure.
\begin{figure}[t!]
    \centering
    \includegraphics[width=\textwidth]{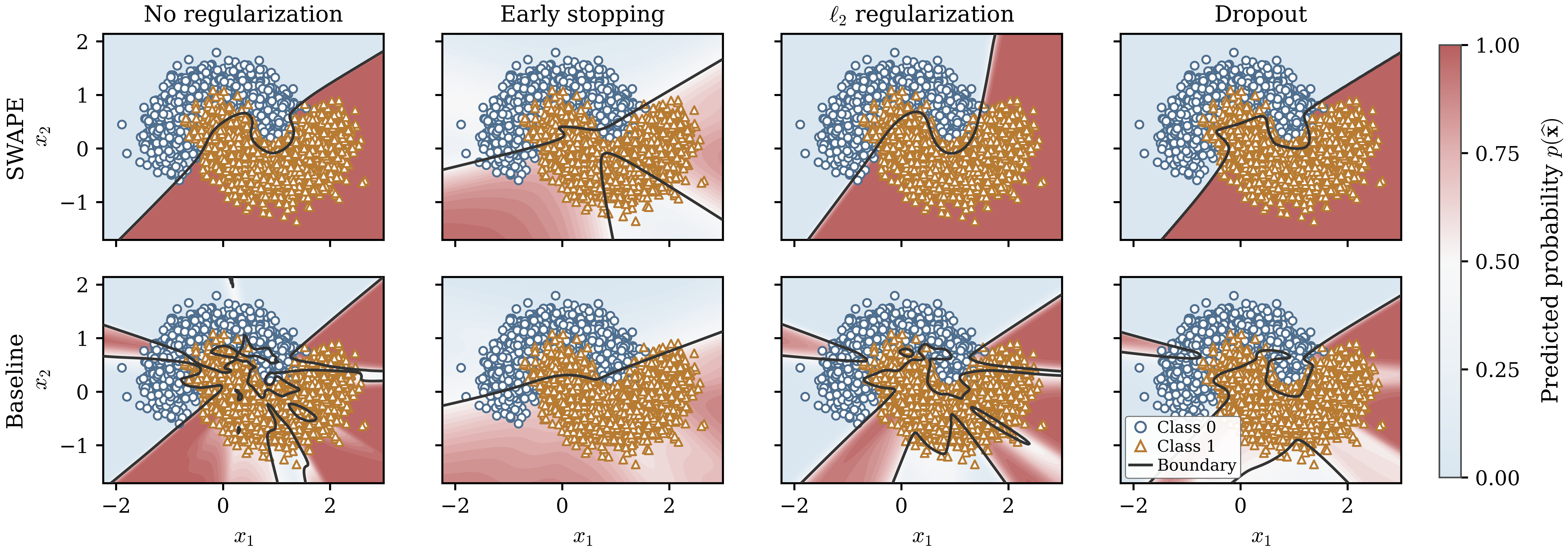}
    \caption{Estimated decision boundaries for \(\alpha=0.30\) in the same replication as Figure~\ref{fig:nn-data2}.}
    \label{fig:nn-boundaries}
\end{figure}

\begingroup
    \renewcommand{\bfseries}{\relax} 
\begin{table}[htbp]
\centering
\caption{Results for binary classification on data with flipped label contamination 
}
\label{tab:nn_classification_performance}
\scriptsize
\setlength{\tabcolsep}{3.5pt}
\renewcommand{\arraystretch}{1.1}
\resizebox{\textwidth}{!}{
\begin{tabular}{cc cc cc cccc}
\toprule
\multirow{2}{*}{$\alpha$} & \multirow{2}{*}{\textbf{Regularization}} & \multicolumn{2}{c}{\textbf{Accuracy}} & \multicolumn{2}{c}{\textbf{AUC}} & \multicolumn{4}{c}{\textbf{Weight Trained by SWAPE}} \\
\cmidrule(lr){3-4}\cmidrule(lr){5-6}\cmidrule(lr){7-10}
& & \textbf{Baseline} & \textbf{SWAPE} & \textbf{Baseline} & \textbf{SWAPE} & \textbf{Retained} & \textbf{Precision} & \textbf{Recall} & \textbf{F1} \\
\midrule
\multirow{4}{*}{0.25} & None & 0.815 (0.025) & 0.908 (0.026) & 0.864 (0.023) & 0.962 (0.016) & 74.05 & 0.822 (0.046) & 0.849 (0.119) & 0.828 (0.086) \\
 & Early stopping & 0.892 (0.033) & 0.892 (0.032) & 0.960 (0.013) & 0.959 (0.015) & 100.00 & 0.000 (0.000) & 0.000 (0.000) & 0.000 (0.000) \\
 & $\ell_2$ & 0.849 (0.024) & 0.910 (0.027) & 0.903 (0.022) & 0.966 (0.017) & 73.34 & 0.809 (0.070) & 0.850 (0.160) & 0.813 (0.122) \\
 & Dropout & 0.895 (0.017) & 0.902 (0.025) & 0.939 (0.014) & 0.957 (0.018) & 76.19 & 0.831 (0.071) & 0.777 (0.224) & 0.772 (0.184) \\
\addlinespace[2pt]
\midrule
\multirow{4}{*}{0.30} & None & 0.771 (0.032) & 0.881 (0.044) & 0.820 (0.031) & 0.943 (0.036) & 70.92 & 0.819 (0.060) & 0.786 (0.194) & 0.780 (0.159) \\
 & Early stopping & 0.869 (0.044) & 0.865 (0.043) & 0.943 (0.035) & 0.941 (0.030) & 100.00 & 0.000 (0.000) & 0.000 (0.000) & 0.000 (0.000) \\
 & $\ell_2$ & 0.808 (0.033) & 0.879 (0.039) & 0.865 (0.030) & 0.946 (0.032) & 70.36 & 0.804 (0.104) & 0.777 (0.223) & 0.758 (0.188) \\
 & Dropout & 0.863 (0.026) & 0.879 (0.033) & 0.913 (0.021) & 0.941 (0.026) & 72.59 & 0.816 (0.108) & 0.740 (0.249) & 0.739 (0.225) \\
\addlinespace[2pt]
\midrule
\multirow{4}{*}{0.35} & None & 0.718 (0.038) & 0.835 (0.062) & 0.762 (0.035) & 0.906 (0.065) & 68.24  & 0.794 (0.098) & 0.718 (0.211) & 0.730 (0.186) \\
 & Early stopping & 0.827 (0.057) & 0.829 (0.057) & 0.911 (0.046) & 0.913 (0.047) & 100.00 & 0.000 (0.000) & 0.000 (0.000) & 0.000 (0.000) \\
 & $\ell_2$ & 0.752 (0.038) & 0.835 (0.058) & 0.806 (0.036) & 0.914 (0.054) & 66.33 & 0.777 (0.114) & 0.728 (0.251) & 0.710 (0.215) \\
 & Dropout & 0.817 (0.036) & 0.844 (0.041) & 0.873 (0.029) & 0.916 (0.038) & 68.19 & 0.787 (0.137) & 0.713 (0.255) & 0.709 (0.233) \\
\addlinespace[2pt]
\midrule
\multirow{4}{*}{0.40} & None & 0.658 (0.040) & 0.764 (0.079) & 0.691 (0.039) & 0.850 (0.087) & 64.83 & 0.742 (0.107) & 0.645 (0.214) & 0.661 (0.194) \\
 & Early stopping & 0.761 (0.087) & 0.755 (0.095) & 0.846 (0.091) & 0.841 (0.104) & 100.00 & 0.000 (0.000) & 0.000 (0.000) & 0.000 (0.000) \\
 & $\ell_2$ & 0.690 (0.041) & 0.773 (0.068) & 0.734 (0.044) & 0.865 (0.076) & 61.82 & 0.716 (0.134) & 0.674 (0.248) & 0.657 (0.207) \\
 & Dropout & 0.748 (0.047) & 0.783 (0.067) & 0.801 (0.044) & 0.868 (0.073) & 66.67 & 0.748 (0.158) & 0.624 (0.280) & 0.631 (0.264) \\
\addlinespace[2pt]
\midrule
\multirow{4}{*}{0.45} & None & 0.577 (0.041) & 0.638 (0.100) & 0.596 (0.043) & 0.701 (0.142) & 65.28 & 0.639 (0.151) & 0.483 (0.230) & 0.510 (0.222) \\
 & Early stopping & 0.632 (0.117) & 0.625 (0.120) & 0.689 (0.147) & 0.681 (0.154) & 100.00  & 0.005 (0.071) & 0.000 (0.002) & 0.000 (0.003) \\
 & $\ell_2$ & 0.597 (0.045) & 0.651 (0.093) & 0.624 (0.048) & 0.728 (0.138) & 61.78 & 0.637 (0.161) & 0.522 (0.273) & 0.515 (0.239) \\
 & Dropout & 0.635 (0.053) & 0.670 (0.094) & 0.670 (0.053) & 0.757 (0.122) & 63.96 & 0.616 (0.212) & 0.524 (0.257) & 0.538 (0.240) \\
\bottomrule
\end{tabular}
}
\begin{minipage}{0.98\textwidth}
\vspace{2pt}
\end{minipage}
\end{table}
\endgroup 
\subsection{Recommender System}
\label{sec:rec_experiments}
This experiment examines the performance of \eqref{prob:mainprob} in building a recommender system using the Netflix Prize dataset \cite{bennett2007netflix}. We employ a regression-based loss used in the Sparse Linear Method (SLIM) \cite{6137254} and test the recommendation performance on data without contamination, with random noise, and Bandwagon attack \cite{HMGLLX21, kapoor2017review, 10.1145/988672.988726} scenarios; we refer to \cite{zhang2023robust} for a comprehensive review of recommender system methods and data contamination settings. Our experiment considers top-\(K\) recommendation \cite{amatriain2010data}, where performance is evaluated only on the model's top-recommended items. 
The Netflix dataset contains over $100$ million ratings, from $480,189$ users on $17,770$ movies, where each user gives a rating between $1$ and $5$ for a movie they watched. We apply \(k\)-core pruning \cite{10.1145/3404835.3463245} and stratified random sampling \cite{amatriain2010data} to obtain a representable subset containing \(1{,}622\) users, \(747\) movies, and \(136{,}579\) ratings, which results in a data matrix with a density of \(11.27\%\), i.e., about eleven percent of entries are filled with nonzero ratings. 

Let \(A\in\mathbb{R}^{n\times d}\) be the user--item interaction matrix, where \(n\) is the number of users and \(d\) is the number of items. Each entry \(A_{ij}\) has a nonzero value (rating) if a user \(i\) rated an item \(j\), and otherwise has a zero. The aim of SLIM is to construct the item--item interaction matrix $X \in \mathbb{R}^{d \times d}$ by minimizing the loss $\mathcal{L}^{\rm SLIM}(X)=\frac12\| A-A X \|_F^2$. Employing the loss function and assigning a weight to each entry, we obtain the formulation,
\begin{align} \label{eq:slim-weighted-matrix}
\minimize_{ X \geq 0, \, W \geq 0} \
\frac12\| W^{1/2}\odot(A-A X)\|_F^2
+\lambda R(X)
+\gamma \sum_{i=1}^n \sum_{j=1}^d \rho(W_{ij})
\text{ subject to } X_{jj}=0,
\end{align} 
where the constraints are inherited from the SLIM method. The problem \eqref{eq:slim-weighted-matrix} can be solved by minimizing column-wise problems defined by the $j$-th column of $A$, $X$ and $W$, denoted $\h a^j$, $\x^j$ and $\w^j$ respectively, for all $j \in \{1, \dots, d\}$:
\begin{equation}
\label{eq:slim-colj-weighted}
\begin{aligned}
\minimize \limits_{\h x^j \in \mathbb{R}^d_{+},\,\h w^j \in \mathbb{R}^n_{+}} 
\tfrac12\,
  \| (\h w^j)^{1/2} 
  \odot \bigl(\h a^j - A\h x^j\bigr)\|_{2}^{2} + 
  \lambda\, R(\h x^j) + 
  \gamma\, \sum \limits_{i=1}^n \rho(w^j_i) 
\text{ subject to }
x_{j}^j=0.
\end{aligned}
\end{equation}
We propose to solve problem \eqref{eq:slim-colj-weighted} by alternately updating $\x$ using the approximation in \eqref{MM:lipgrad} and $\w$ using \eqref{MM:wc_W}. 
Note that the former approximation differs from the linear regression experiment, which employs \eqref{MM:wc}. This modification lets us solve a large-scale problem by deriving an efficient one-step update using the gradient of the current iterate, i.e., $\nabla \mathcal{L}^{\rm SLIM}(X^t)$, while avoiding an expensive inverse-matrix computation. 

We compare the SWAPE with widely practiced methods including the singular value decomposition (SVD) \cite{koren2009matrix}, User-based \(k\)-nearest neighbors (UserKNN), and Item-based \(k\)-nearest neighbors (ItemKNN) \cite{sarwar2001item}. We also implement SLIM \cite{6137254} and a modification of SLIM, where the LTS scheme is applied to the users of the data matrix, called SLIM-LTS, for comparison. 

\subsubsection{Experiment setting}
We compare the performance of all models on three data settings: Netflix data without any contamination, random noise, and bandwagon attack \cite{HMGLLX21, kapoor2017review, 10.1145/988672.988726}. Following the data model in \cite{ye2024robust}, the random noise uses a training set in which some ratings are arbitrarily swapped to a different value. We control the proportion of the flipped ratings by a parameter $\alpha \in \{ 0.1, 0.3\}$, where an increased $\alpha$ results in more noise in the data. The bandwagon attack injects artificially created, or fake, users to promote a predetermined set of target items. These users give the maximum rating to the target items, as well as items that are genuinely popular in the dataset. In our setting, each user rates $5$ for $20$ target items and the $20$ most popular items, and assigns a uniformly drawn random rating between $3$ and $5$ to $60$ arbitrarily chosen items. We use  $\alpha \in \{0.05, 0.10\}$ to control the proportion of artificial users in the training set.  

Given the data matrix $A\in\mathbb{R}^{1622\times 747}$, define a set of index pairs that captures rated elements: 
\begin{align*}
    \Omega = \{ (i,j) \in \{1, \dots, 1622\} \times \{1, \dots, 747\} \, \vert \, A_{ij} \neq 0 \}.
\end{align*}
We perform $80/20$ split at the user-level to construct the training matrix $A^{\rm tr}\in\mathbb{R}^{1622\times 747}$ such that $A^{\rm tr}$ contains approximately $80\%$ of the observations in $\Omega$, \(108{,}628\) ratings, while the rest of the elements are set to zero. The remaining \(27{,}951\) index pairs form the testing set $A^{\rm te}\in\mathbb{R}^{1622\times 747}$, resulting $A = A^{\rm tr} + A^{\rm te}$. Problem \eqref{eq:slim-weighted-matrix} formulated with $A^{\rm tr}$ is solved to obtain $(\widehat{X}, \widehat{W})$, which then makes the prediction, 
\begin{align*}
    \widehat A = A^{\mathrm{tr}}\widehat X.
\end{align*}Each element $\widehat{A}_{ij}$ can be interpreted as a score to recommend item $j$ to the user $i$. Using the scores, we describe the recommendation-making process at the user level.

For each user $i$, we identify $K$ items, e.g., $K=5$, with the highest scores among items outside the training set. The resulting set, denoted by $ \widehat{\mathcal{R}}_{\kappa}(i)$, 
contains the indices of the top $K$ movies the model recommends to each user. On the other hand, using $A^{\rm te}$, we identify items that actually received a sufficiently high rating from the user; e.g., a movie $j$ rated by user $i$ with a rating $A^{\rm te}_{ij} \geq \tau$ for some threshold $\tau > 0$. We call the latter a relevant itemset for user $i$, denoted by $\mathcal{R}(i)$. Observe that a recommendation can be considered successful if an item in $\widehat{\mathcal{R}}_{\kappa}(i)$ is contained in $\mathcal{R}(i)$. This is captured by $\text{rank}(i)$, which has a value of $1$ if the top item in $\widehat{\mathcal{R}}_{\kappa}(i)$ belongs to $\mathcal{R}(i)$, or $K$ if the $K$-th item appears in the latter set. Consequently, the system's recommendation for each user can have a rank between $1$ and $K$ if the recommendation is successful, or $\infty$ otherwise. With the above discussion, we present the evaluation metrics:
{\small
\begin{align} \label{metric:rec}
\text{\footnotesize MAE}
=
\frac{1}{27951}
\sum_{\substack{(i,j): \\ A_{ij}^{\rm te} \neq 0}}
\left|A_{ij}^{\rm te}-\widehat{ A}_{ij}\right|, \
\begin{array}{l@{}}
  \text{Hit} \\
  \text{Rate}
\end{array}
=
\frac{1}{1622}
\sum_{i=1}^{1622}
\mathbb{I}\!\left(
\widehat{\mathcal{R}}_{\kappa}(i)
\cap
\mathcal{R}(i)
\neq\varnothing
\right), \
\text{\footnotesize MRR}
=
\frac{1}{1622}
\sum_{i=1}^{1622}
\frac{1}{\text{rank}(i)}.
\end{align}}
The mean absolute error (MAE) reports the average difference between the true ratings in $A^{\rm te}$ and scores given by $\widehat{A}$. The hit rate reports the percentage of users with a successful recommendation. 
The mean reciprocal rank (MRR) further evaluates whether the order of recommendations is correct, with the highest value in the summand achieving $1$ if the top item recommendation is successful.

We conduct $30$ replications by performing independent train--test splits, and report the average values and standard deviations in the results. For each replication, our experiment evaluates the performance using different numbers of recommended items, \(K\in\{5,10,15\}\). To analyze $\widehat{W}$, we report only the percentage of retained data, omitting precision, recall, and F1 score as we assign entry-level weights in this experiment. Therefore, our model measures the proportion of entries in $A^{\rm tr}$ used in training, whereas SLIM-LTS reports the percentage of users kept during training for each columnwise problem. 

\subsubsection{Results}
We first compare all models' performance on the dataset without any contamination. In Table~\ref{tab:netfset1}, when $K=5$, we observe that our model achieves the highest hit rate and MRR of $0.481$ and $0.302$, respectively. Recall that our model solves SLIM while optimizing the weights on each entry in $A^{\rm tr}$. Using $79.1$\% of the data entries, our model improves baseline SLIM performance, e.g., from a hit rate of $0.402$. On the other hand, UserKNN and ItemKNN both show significantly lower hit rates around the value $0.015$, but outperform our model and SLIM in terms of MAE values. 
\begingroup
\setlength{\tabcolsep}{3pt}
\renewcommand{\bfseries}{\relax} 
\begin{table}[h]
\scriptsize
\caption{Performance comparison on the Netflix dataset}
\centering
\begin{adjustbox}{max width=\textwidth}
\begin{tabular}{l c ccc ccc c}
\toprule
\multirow{2}{*}{\textbf{Method}}
& \multirow{2}{*}{\textbf{MAE}}
& \multicolumn{3}{c}{\textbf{HitRate}}
& \multicolumn{3}{c}{\textbf{MRR}}
& \multirow{2}{*}{\textbf{Retained}} \\
\cmidrule(lr){3-5}\cmidrule(lr){6-8}
&
&
K=5
& K=10
& K=15
& K=5
& K=10
& K=15
& \\
\midrule
SWAPE
& 2.985 (0.007)
& \textbf{0.481 (0.019)}
& \textbf{0.696 (0.011)}
& \textbf{0.785 (0.011)}
& \textbf{0.302 (0.018)}
& \textbf{0.332 (0.018)}
& \textbf{0.339 (0.018)}
& 79.13 \\

SLIM
& 2.221 (0.007)
& 0.402 (0.010)
& 0.627 (0.012)
& 0.760 (0.010)
& 0.205 (0.006)
& 0.235 (0.005)
& 0.246 (0.005)
& -- \\

SLIM-LTS
& 1.249 (0.008)
& 0.196 (0.012)
& 0.365 (0.016)
& 0.491 (0.015)
& 0.089 (0.006)
& 0.111 (0.006)
& 0.121 (0.006)
& 80.00 \\

SVD
& 3.476 (0.005)
& 0.243 (0.009)
& 0.419 (0.012)
& 0.541 (0.013)
& 0.117 (0.006)
& 0.140 (0.006)
& 0.150 (0.006)
& -- \\

UserKNN
& 0.745 (0.003)
& 0.012 (0.003)
& 0.036 (0.005)
& 0.071 (0.008)
& 0.005 (0.001)
& 0.008 (0.001)
& 0.011 (0.001)
& -- \\

ItemKNN
& \textbf{0.729 (0.003)}
& 0.017 (0.003)
& 0.049 (0.006)
& 0.095 (0.008)
& 0.007 (0.001)
& 0.011 (0.002)
& 0.014 (0.002)
& -- \\

\bottomrule
\end{tabular}
\end{adjustbox}
\label{tab:netfset1}
\end{table}
\endgroup

The difference in performance can be analyzed by examining the magnitude of predicted scores, i.e., entries in $\widehat{A}$, produced by the models. We report that SLIM's predicted scores are generally lower than the true ratings given by users. This is attributed to a sparsity regularizer that drives the solution of the SLIM formulation toward zero, resulting in lower values in $\widehat{A}$. We observe similar behavior in other SLIM-based models and SVD, which achieve higher hit rates while mostly underpredicting the scores.
As $K$ values increase to $10$ and $15$, all models improve their hit rates and MRR, with SWAPE achieving the strongest overall recommendation performance. SLIM-LTS improves SLIM's MAE by removing $20$\% of users at the column level. However, it resulted in decreased hit rate and MRR. SVD achieves a lower hit rate than our model, while also having a higher MAE. 

In the random-noise setting, we observe a similar trend. In Table~\ref{tab:symnoise_metrics_10}, our method remains the strongest, with the highest hit rate and MRR, but with higher MAE values. Overall, all methods' evaluation metrics worsen slightly with the introduction of random noise, and performance worsens further as the contamination level $\alpha$ increases; for example, our model's hit rate decreases from $0.481$ to $0.478$ when $\alpha$ increases from $0.1$ to $0.3$ for $K=5$. 
\begingroup 
\setlength{\tabcolsep}{3pt}
\renewcommand{\bfseries}{\relax} 
\begin{table}[t]
\scriptsize
\renewcommand{\arraystretch}{1}
\caption{Performance of different models on the Netflix dataset with random noise.}
\centering
\begin{adjustbox}{max width=\textwidth}
\begin{tabular}{c l c ccc ccc c}
\toprule
\multirow{2}{*}{$\alpha$}
& \multirow{2}{*}{\textbf{Method}}
& \multirow{2}{*}{\textbf{MAE}}
& \multicolumn{3}{c}{\textbf{HitRate}}
& \multicolumn{3}{c}{\textbf{MRR}}
& \multirow{2}{*}{\textbf{Retained}} \\
\cmidrule(lr){4-6}\cmidrule(lr){7-9}
&
&
&
K=5
& K=10
& K=15
& K=5
& K=10
& K=15
& \\
\midrule

\multirow{6}{*}{0.1}
& SWAPE
& 3.027 (0.007)
& \textbf{0.481 (0.019)}
& \textbf{0.695 (0.011)}
& \textbf{0.785 (0.010)}
& \textbf{0.293 (0.020)}
& \textbf{0.322 (0.020)}
& \textbf{0.329 (0.020)}
& 79.96 \\

& SLIM
& 2.254 (0.007)
& 0.389 (0.010)
& 0.614 (0.013)
& 0.750 (0.010)
& 0.197 (0.006)
& 0.226 (0.006)
& 0.237 (0.005)
& -- \\

& SLIM-LTS
& 1.293 (0.010)
& 0.177 (0.012)
& 0.333 (0.012)
& 0.458 (0.012)
& 0.080 (0.007)
& 0.100 (0.006)
& 0.110 (0.006)
& 80.00 \\

& SVD
& 3.481 (0.005)
& 0.222 (0.011)
& 0.383 (0.014)
& 0.500 (0.013)
& 0.106 (0.006)
& 0.128 (0.006)
& 0.137 (0.006)
& -- \\

& UserKNN
& 0.779 (0.003)
& 0.006 (0.002)
& 0.018 (0.004)
& 0.041 (0.007)
& 0.003 (0.001)
& 0.004 (0.001)
& 0.006 (0.001)
& -- \\

& ItemKNN
& \textbf{0.762 (0.003)}
& 0.008 (0.002)
& 0.028 (0.005)
& 0.062 (0.009)
& 0.003 (0.001)
& 0.006 (0.001)
& 0.009 (0.001)
& -- \\

\midrule

\multirow{6}{*}{0.3}
& SWAPE
& 3.109 (0.006)
& \textbf{0.478 (0.019)}
& \textbf{0.691 (0.012)}
& \textbf{0.782 (0.010)}
& \textbf{0.289 (0.021)}
& \textbf{0.319 (0.020)}
& \textbf{0.326 (0.020)}
& 81.58 \\

& SLIM
& 2.318 (0.006)
& 0.370 (0.009)
& 0.592 (0.008)
& 0.727 (0.008)
& 0.184 (0.005)
& 0.214 (0.004)
& 0.224 (0.004)
& -- \\

& SLIM-LTS
& 1.392 (0.009)
& 0.141 (0.011)
& 0.275 (0.012)
& 0.392 (0.011)
& 0.063 (0.006)
& 0.080 (0.005)
& 0.089 (0.005)
& 80.00 \\

& SVD
& 3.487 (0.007)
& 0.183 (0.008)
& 0.323 (0.011)
& 0.433 (0.012)
& 0.090 (0.006)
& 0.109 (0.007)
& 0.117 (0.006)
& -- \\

& UserKNN
& 0.851 (0.003)
& 0.001 (0.001)
& 0.004 (0.002)
& 0.010 (0.004)
& 0.000 (0.000)
& 0.001 (0.000)
& 0.001 (0.000)
& -- \\

& ItemKNN
& \textbf{0.836 (0.003)}
& 0.002 (0.001)
& 0.007 (0.003)
& 0.023 (0.007)
& 0.001 (0.000)
& 0.001 (0.001)
& 0.003 (0.001)
& -- \\

\bottomrule
\end{tabular}
\end{adjustbox}
\label{tab:symnoise_metrics_10}
\end{table}
\endgroup

Finally, we analyze the performance of all models under the bandwagon attack. We consider $\alpha = 0.05$ and $0.10$, where the dataset includes $5$\% and $10$\% of artificially created users who try to promote target items as well as genuinely popular items, respectively. We first evaluate the models using the same metrics as before, then examine whether the bandwagon attack influences the models to recommend target items. Table~\ref{tab:orgbndwg_metrics5} shows that all models' hit rates and MRR increase as the dataset contains a higher percentage of bandwagon attackers. This indicates that the models can better identify genuinely popular items when more users (including attackers) give those items the highest rating. MAE values change only marginally across all models. 

\begingroup 
\setlength{\tabcolsep}{3pt}
\renewcommand{\bfseries}{\relax} 
\begin{table}[h]
\scriptsize
\caption{Performance of different models on the Netflix dataset under a bandwagon attack.}
\centering
\begin{adjustbox}{max width=\textwidth}
\begin{tabular}{c l c ccc ccc c}
\toprule
\multirow{2}{*}{$\alpha$}
& \multirow{2}{*}{\textbf{Method}}
& \multirow{2}{*}{\textbf{MAE}}
& \multicolumn{3}{c}{\textbf{HitRate}}
& \multicolumn{3}{c}{\textbf{MRR}}
& \multirow{2}{*}{\textbf{Retained}} \\
\cmidrule(lr){4-6}\cmidrule(lr){7-9}
&
&
&
K=5
& K=10
& K=15
& K=5
& K=10
& K=15
& \\
\midrule

\multirow{6}{*}{$0.05$}
& SWAPE
& 2.944 (0.007)
& \textbf{0.492 (0.021)}
& \textbf{0.702 (0.011)}
& 0.788 (0.011)
& \textbf{0.314 (0.017)}
& \textbf{0.343 (0.016)}
& \textbf{0.350 (0.016)}
& 77.66 \\

& SLIM
& 2.212 (0.007)
& 0.432 (0.009)
& 0.668 (0.012)
& \textbf{0.798 (0.010)}
& 0.222 (0.005)
& 0.254 (0.005)
& 0.264 (0.005)
& -- \\

& SLIM-LTS
& 1.234 (0.009)
& 0.206 (0.012)
& 0.375 (0.015)
& 0.503 (0.017)
& 0.093 (0.007)
& 0.115 (0.007)
& 0.125 (0.007)
& 80.00 \\

& SVD
& 3.534 (0.006)
& 0.244 (0.011)
& 0.423 (0.013)
& 0.543 (0.012)
& 0.119 (0.007)
& 0.142 (0.007)
& 0.152 (0.007)
& -- \\

& UserKNN
& \textbf{0.746 (0.003)}
& 0.018 (0.005)
& 0.052 (0.008)
& 0.102 (0.010)
& 0.007 (0.002)
& 0.011 (0.002)
& 0.015 (0.002)
& -- \\

& ItemKNN
& 0.747 (0.003)
& 0.012 (0.003)
& 0.037 (0.005)
& 0.074 (0.007)
& 0.004 (0.001)
& 0.008 (0.001)
& 0.010 (0.001)
& -- \\

\midrule

\multirow{6}{*}{$0.10$}
& SWAPE
& 2.901 (0.007)
& \textbf{0.506 (0.023)}
& \textbf{0.709 (0.010)}
& 0.788 (0.011)
& \textbf{0.330 (0.018)}
& \textbf{0.358 (0.017)}
& \textbf{0.364 (0.017)}
& 75.18 \\

& SLIM
& 2.214 (0.006)
& 0.445 (0.011)
& 0.682 (0.011)
& \textbf{0.810 (0.008)}
& 0.231 (0.006)
& 0.263 (0.005)
& 0.273 (0.005)
& -- \\

& SLIM-LTS
& 1.230 (0.009)
& 0.215 (0.010)
& 0.384 (0.014)
& 0.513 (0.013)
& 0.098 (0.007)
& 0.120 (0.007)
& 0.130 (0.007)
& 80.00 \\

& SVD
& 3.581 (0.006)
& 0.246 (0.011)
& 0.416 (0.010)
& 0.533 (0.011)
& 0.118 (0.005)
& 0.140 (0.005)
& 0.149 (0.005)
& -- \\

& UserKNN
& \textbf{0.748 (0.003)}
& 0.020 (0.005)
& 0.061 (0.008)
& 0.118 (0.010)
& 0.008 (0.002)
& 0.013 (0.002)
& 0.017 (0.002)
& -- \\

& ItemKNN
& 0.760 (0.003)
& 0.010 (0.002)
& 0.032 (0.005)
& 0.064 (0.007)
& 0.004 (0.001)
& 0.007 (0.001)
& 0.009 (0.001)
& -- \\

\bottomrule
\end{tabular}
\label{tab:orgbndwg_metrics5}
\end{adjustbox}
\end{table}
\endgroup

For the same bandwagon attack settings, we report all models' target hit rate and target MRR in Table~\ref{tab:target_metrics5}. In this table, the target hit rate is measured by \eqref{metric:rec} where the relevant set of the $i$-th user, $\mathcal{R}(i)$, is replaced by the set of target items. Therefore, a higher target hit rate (or MRR) indicates that the model was more influenced by the bandwagon attack. 
SWAPE achieves a target hit rate and target MRR of zero across all values of $K$ for both attack strengths, indicating that injected profiles fail to place any target item into the top-$K$ recommendations. 

\begingroup
\setlength{\tabcolsep}{3pt}
\renewcommand{\bfseries}{\relax} 
\renewcommand{\arraystretch}{1.08}

\begin{table}[t]
\scriptsize
\centering
\caption{Robustness to target promotion under bandwagon attacks. Lower Target HitRate and Target MRR indicate better robustness.}
\label{tab:target_metrics5}

\begin{tabular}{c l ccc ccc}
\toprule
\multirow{2}{*}{$\alpha$} & \multirow{2}{*}{\textbf{Method}} & \multicolumn{3}{c}{\textbf{Target HitRate}} & \multicolumn{3}{c}{\textbf{Target MRR}} \\
\cmidrule(lr){3-5}\cmidrule(lr){6-8}
 &  & $K=5$ & $K=10$ & $K=15$ & $K=5$ & $K=10$ & $K=15$ \\
\midrule
\multirow{6}{*}{$0.05$} & SWAPE & 0.0000 (0.0000) & 0.0000 (0.0000) & 0.0000 (0.0000) & 0.0000 (0.0000) & 0.0000 (0.0000) & 0.0000 (0.0000) \\
 & SLIM & 0.0366 (0.0029) & 0.0496 (0.0029) & 0.0530 (0.0045) & 0.0165 (0.0021) & 0.0183 (0.0021) & 0.0185 (0.0022) \\
 & SLIM-LTS & 0.0004 (0.0005) & 0.0016 (0.0009) & 0.0032 (0.0016) & 0.0002 (0.0003) & 0.0003 (0.0003) & 0.0004 (0.0003) \\
 & SVD & 0.0574 (0.0031) & 0.0806 (0.0039) & 0.0996 (0.0053) & 0.0284 (0.0022) & 0.0315 (0.0020) & 0.0330 (0.0021) \\
 & UserKNN & 0.0515 (0.0014) & 0.0598 (0.0022) & 0.0677 (0.0032) & 0.0265 (0.0028) & 0.0276 (0.0027) & 0.0282 (0.0027) \\
 & ItemKNN & 0.1161 (0.0166) & 0.2603 (0.0360) & 0.3955 (0.0473) & 0.0457 (0.0083) & 0.0643 (0.0099) & 0.0749 (0.0104) \\
\midrule
\multirow{6}{*}{$0.10$} & SWAPE & 0.0000 (0.0000) & 0.0000 (0.0000) & 0.0000 (0.0000) & 0.0000 (0.0000) & 0.0000 (0.0000) & 0.0000 (0.0000) \\
 & SLIM & 0.0665 (0.0038) & 0.0915 (0.0026) & 0.0954 (0.0039) & 0.0296 (0.0028) & 0.0331 (0.0027) & 0.0334 (0.0027) \\
 & SLIM-LTS & 0.0001 (0.0004) & 0.0007 (0.0009) & 0.0017 (0.0017) & 0.0000 (0.0001) & 0.0001 (0.0002) & 0.0002 (0.0002) \\
 & SVD & 0.0934 (0.0038) & 0.1245 (0.0041) & 0.1425 (0.0058) & 0.0460 (0.0027) & 0.0502 (0.0026) & 0.0516 (0.0027) \\
 & UserKNN & 0.0971 (0.0013) & 0.1074 (0.0023) & 0.1181 (0.0033) & 0.0561 (0.0025) & 0.0575 (0.0025) & 0.0583 (0.0025) \\
 & ItemKNN & 0.2525 (0.0145) & 0.4410 (0.0270) & 0.5685 (0.0314) & 0.1150 (0.0062) & 0.1397 (0.0073) & 0.1497 (0.0077) \\
\bottomrule
\end{tabular}
\end{table}
\endgroup

\section{Conclusion}
In this paper, we proposed a joint sparse sample weighting and parameter estimation framework that computes model parameters and observation weights simultaneously. The resulting bivariate formulation allows the training process to adaptively reduce the influence of unreliable observations while preserving informative data points. To solve the problem, we developed an alternating majorization-minimization algorithm and showed that, under the stated assumptions, the generated sequence satisfies non-increasing objective values and subsequence convergence, with its accumulation point being a directional stationary solution in the weakly convex case. The numerical results demonstrate the effectiveness of the SWAPE in regression, classification, and recommender system settings. In both regression and classification, the proposed methods remain competitive under overall contaminated settings. 
In recommender systems, the SWAPE achieves strong ranking performance under the no-contamination setting, random noise, and bandwagon attacks, while the target-specific results show that it limits the effects of adversarial attacks. Overall, these results suggest that our method adaptively reduces the impact of undesirable samples and achieves robust performance
when observation reliability is uncertain. 

\section*{Acknowledgments}
The third author's work was partially supported by the National Science Foundation under grant CCF-2423246.

\section*{Data Availability Statement}
We provide a Python implementation of the SWAPE at \url{https://github.com/hassan-nojavan/SWAPE}. The repository includes separate implementations for the regression, recommender system, and neural network settings considered in this work. For the neural-network experiments, we additionally provide implementations incorporating ($\ell_2$) regularization, dropout, and early stopping. 

\bibliographystyle{plain}
\bibliography{reference}

\clearpage
\appendix

\section*{Appendices}
Appendix \ref{app:proofs} provides proofs of the theoretical results, and Appendices~\ref{app:reg}--\ref{app:rec} present additional details on the three experiments described in Sections~\ref{sec:reg_experiments}--\ref{sec:rec_experiments}. We note that all experiments were executed on Southern Methodist University's HPC cluster,
featuring 128 cores powered by AMD EPYC 7763 processors. When using Gurobi, we used the default parameters unless otherwise specified. 

\section{Proofs of Results}
\label{app:proofs}

\subsection{Proof of Proposition~\ref{prop:DcWc}}
\begin{proof} 
    Combining \eqref{Lcve_Lipgrad} and the first order condition of the convex function $\mathcal{L}_{\w}^{\rm cvx} (\x)$ yields
    \begin{align*}
        &\mathcal{L}_{\w}^{\rm cvx} (\y) - \mathcal{L}_{\w}^{\rm cve} (\y) + \frac{\lipLcve}{2} \| \x - \y \|_2^2 \\ 
        &\hspace{13pc}
        \geq 
        \mathcal{L}_{\w}^{\rm cvx} (\x) - \mathcal{L}_{\w}^{\rm cve} (\x) + \mathcal{L}_{\w}^{\rm cvx} \, ' (\x; \y - \x) - \mathcal{L}_{\w}^{\rm cve} \, ' (\x; \y - \x),
    \end{align*}
    which is equivalent to 
    \begin{align*}
        \mathcal{L}_{\w}(\y) + \frac{\lipLcve}{2} \| \y \|_2^2 \geq 
        \mathcal{L}_{\w}(\x) + \frac{\lipLcve}{2} \| \x \|_2^2 + \mathcal{L}_{\w} \, ' (\x; \y - \x) + \la \lipLcve \, \x, \y - \x \ra.
    \end{align*}
\end{proof}

\subsection{Proof of Lemma~\ref{lem:d_deriv}}
\begin{proof} 
    Under \ref{assume:L_lipgrad}, the loss term is differentiable, so showing the case of \ref{assume:L_dc} is sufficient.
    The assumption \ref{assume:L_dc} implies that $\ell$ is DC and so is $\rho$ by \ref{assume:p_wc}; therefore, their directional derivatives are well-defined \cite[Lemma~3.1.5]{NLP_Bazarra_et_al}. Applying the definition of the directional derivative yields
    \begin{align*}
        F'(\h z; \h d) 
        &= \lim \limits_{\tau \downarrow 0} \displaystyle \frac{1}{\tau} \Bigg\{ \,\Big[ \, \sum \limits_{i=1}^n (w_i + \tau d^{\h w}_i) \, \ell (\h x  + \tau \h d^{\h x}; \, \h \xi^i) + \lambda \, R(\x + \tau \h d^{\h x}) + \gamma \, \sum \limits_{i=1}^n \rho (w_i  + \tau d^{\h w}_i) \, \Big] \\
        &\hspace{15pc} \hfill
        - \Big[ \, \sum \limits_{i=1}^n w_i  \, \ell (\h x ; \, \h \xi^i) + \lambda \, R(\x) + \gamma \, \sum \limits_{i=1}^n \rho (w_i ) \, \Big] \Bigg\} \\
        &= \lim \limits_{\tau \downarrow 0} \displaystyle \frac{1}{\tau} \Bigg\{ \,
        \sum \limits_{i=1}^n w_i  
        \Big[ \ell (\h x  + \tau \h d^{\h x}; \, \h \xi^i) - \ell (\h x ; \, \h \xi^i) \, \Big] 
        + \lambda \, \Big[ R(\x + \tau \h d^{\h x}) - R(\x) \Big] \\
        &\hspace{10pc}
        + \gamma \, \sum \limits_{i=1}^n
        \Big[ \, \rho (w_i  + \tau d^{\h w}_i) - \rho (w_i ) \, \Big]
        + \tau  \sum \limits_{i=1}^n
        d^{\h w}_i \,  \ell (\h x  + \tau \h d^{\h x}; \, \h \xi^i) \Bigg\} \\
        &= 
        \underbrace{
        \sum \limits_{i=1}^n \Big[ 
        w_i  \, \ell(\bullet \, ; \, \h \xi^i)'(\h x ; \h d^{\h x})
        \Big] + \lambda \, R'(\x; \h d^{\x})
        }_{F_{\w}'(\h x ; \h d^{\h x})}
        + \underbrace{\sum \limits_{i=1}^n \Big\{
        \gamma \, \rho'(w_i ; d_i^{\h w})
        + d^{\h w}_i \,  \ell (\h x ; \, \h \xi^i) 
        \Big\} }_{F_{\x}'(\h w ; \h d^{\h w})}.
    \end{align*}
\end{proof}

\subsection{Proof of Lemma~\ref{lemma:MM_DC}}
\begin{proof} 
    By convexity of $\mathcal{L}^{\rm cve}$, we obtain
    \begin{align} \label{ineq:MMdc}
        \widehat{F}_{\w^t}(\x) 
        &= \mathcal{L}_{\w^t}^{\rm cvx} (\x) - \mathcal{L}_{\w^t}^{\rm cve} (\x) + \lambda \, R(\x) \nonumber \\ 
        &\leq \mathcal{L}_{\w^t}^{\rm cvx} (\x) - \mathcal{L}_{\w^t}^{\rm cve} (\x^t) - \la \zeta_{t}, \x - \x^t \ra + \lambda \, R(\x) \text{ for any } \zeta_{t} \in \partial \mathcal{L}_{\w^t}^{\rm cve} (\x^t) \nonumber \\
        &= \widehat{G}_{\w^t}(\x; \x^t) - \frac{c_{\x}}{2} \| \x - \x^t \|_2^2,
    \end{align}
    for all $\x \in \mathcal{X}$. The conditions in \eqref{MM:cond} follow from the above.
\end{proof}

\subsection{Proof of Lemma~\ref{lemma:MM_lipgrad}}
\begin{proof}
    By \ref{assume:L_lipgrad}, we have
    \begin{align} \label{ineq:MM_lipgrad}
    \widehat{F}_{\w^t}(\x) 
    &\leq \mathcal{L}_{\w^t}(\x^t) + \la \nabla \mathcal{L}_{\w^t}(\x^t), \x - \x^t \ra + \lambda \, R(\x) + \frac{\lip}{2} \| \x - \x^t \|_2^2 \\ \nonumber
    &= \widehat{G}_{\w^t}(\x; \x^t) - \frac{c_{\x} - \lip}{2} \| \x - \x^t \|_2^2,
    \end{align}
    for any $\x \in \mathcal{X}$. Applying the condition of the lemma proves the statement.
\end{proof}

\subsection{Proof of Lemma~\ref{lemma:MM_wc_w}}
\begin{proof}  
    The first statement holds by definition. To show the second part, first note that \ref{assume:p_wc}, $P(\w)$ is weakly convex with modulus $\sigma_{\rho}$ by \ref{assume:p_wc}. Since $\mathcal{L}_{\x^{t+1}}(\w)$ is linear and we have $\mathcal{L}_{\x^{t+1}}(\w) = \mathcal{L}_{\x^{t+1}}(\bar{\w}) + \mathcal{L}_{\x^{t+1}}'(\bar{\w}; \, \w - \bar{\w})$. Combining this equality with the weakly convexity of $P$, we have
    \begin{align} \label{ineq: w_G_hat}
        \mathcal{L}_{\x^{t+1}}(\bar{\w}) + \gamma P(\bar{\w}) 
        + 
        \mathcal{L}_{\x^{t+1}}'(\bar{\w}; \w - \bar{\w}) + \gamma P'(\bar{\w}; \w - \bar{\w})
        \notag\\\leq\;\mathcal{L}_{\x^{t+1}}(\w) + \gamma P(\w)
        + \frac{c_\w}{2} \| \w - \bar{\w} \|_2^2
    \end{align}
    for any $\w, \bar{\w} \in \mathcal{W}$ since $c_{\w} \geq \gamma \sigma_{\rho}$. Choosing $\bar{\w} = \w^t$, \eqref{ineq: w_G_hat} yields the first-order inequality of a convex function. This shows that $\widehat{G}_{\x^{t+1}}(\w; \w^t)$ is convex.
\end{proof}

\subsection{Proof of Theorem~\ref{thm:conv}}
\begin{proof} 
        \begin{enumerate}[label=\textup{(\alph*)}]
        \item We will first prove for the case \ref{thm:i}. By Lemma~\ref{lemma:MM_DC} and the optimality of $\x^{t+1}$, we have 
        \begin{align*}
           \widehat{F}_{\w^t}(\x^t) = \widehat{G}_{\w^t}(\x^t; \x^t) \geq \widehat{G}_{\w^t}(\x^{t+1}; \x^t),
        \end{align*}
        where $\widehat{F}_{\w^t}(\x^t)$ is defined in \eqref{MM:F}.
        Applying \eqref{ineq:MMdc} to the above yields
        \begin{align} \label{proof:alg_a_step1}
            \widehat{F}_{\w^t}(\x^t) - \widehat{F}_{\w^t}(\x^{t+1}) \geq \frac{c_\h x}{2}\,\bigl\|\h x^{t+1}-\h x^t\bigr\|_2^{2}.
        \end{align}
        Furthermore, by Lemma~\ref{lemma:MM_wc_w} and the optimality of $\w^{t+1}$, we obtain
        \begin{align*}
            \widehat{F}_{\x^{t+1}}(\w^t) 
            = \widehat{G}_{\x^{t+1}}(\w^t; \w^t) \geq \widehat{G}_{\x^{t+1}}(\w^{t+1}; \w^t) 
            = \widehat{F}_{\x^{t+1}}(\w^{t+1}) + \frac{c_{\w}}{2} \| \w^{t+1} - \w^t \|_2^2,
        \end{align*}
        which is equivalent to 
        \begin{align} \label{proof:alg_a_step2}
            \widehat{F}_{\x^{t+1}}(\w^t)  - \widehat{F}_{\x^{t+1}}(\w^{t+1}) \geq \frac{c_\h w}{2} \| \w^{t+1}-\w^t  \|_2^{2}.
        \end{align}
        Combining \eqref{proof:alg_a_step1} and \eqref{proof:alg_a_step2} shows inequality \eqref{eq:monodec} where $\bar{c}_{\h x} = c_{\x}$. The convergence of $\{ F(\h x^t, \h w^t ) \}_{t=1}^\infty$ is given by the lower-boundedness of $F$. The proof for case \ref{thm:ii} can be shown similarly by applying \eqref{ineq:MM_lipgrad} and Lemma~\ref{lemma:MM_lipgrad}. In this case, $\bar{c}_{\h x} = c_{\x} - \lip$. The case \ref{thm:iii} can be shown by using Lemma~\ref{lemma:MM_wc}, resulting $\bar{c}_{\h x} = c_{\x}$.

       \item By summing \eqref{eq:monodec}, we have
    \begin{align} \label{proof:sum_up}
        F(\h x^{0}, \h w^{0}) - \lim \limits_{t \rightarrow \infty} F(\h x^{t}, \h w^{t})
      \geq
      \frac{\bar{c}_\x}{2} \sum \limits_{t=0}^\infty \,\bigl\|\h x^{t+1}-\h x^{t}\bigr\|_2^{2}
      +\frac{c_\h w}{2} \sum \limits_{t=0}^\infty \,\bigl\|\h w^{t+1}-\h w^{t}\bigr\|_2^{2}. 
    \end{align}
    Since $\bar c_{\h x}, c_{\h w} > 0$, we have $\| \h x^{t+1} - \h x^{t} \|_2 \rightarrow 0$ and $\| \h w^{t+1} - \h w^{t} \|_2 \rightarrow 0$.

    \item By Bolzano-Weierstrass theorem, $\{ (\h x^t,\h w^t) \}_{t=1}^\infty$ has a convergent subsequence. Next, we prove convergence to a directional stationary point for the case of (c). For any $t_j$, we have 
    \begin{align} \label{thm_c_xdesc}
    \mathcal{L}(\x^{t_j+1}, \w^{t_j}) + \lambda R(\x^{t_j+1}) + \frac{c_{\x}}{2} \| \x^{t_j+1} &- \x^{t_j} \|_2^2  
    = \widehat{G}_{\w^{t_j}}(\x^{t_j+1}; \x^{t_j}) \\
    &\hspace{-7em}\leq \widehat{G}_{\w^{t_j}}(\x; \x^{t_j})
    =  \mathcal{L}(\x, \w^{t_j}) + \lambda R(\x) + \frac{c_{\x}}{2} \| \x - \x^{t_j} \|_2^2, \nonumber
    \end{align}
    for any $\x \in \mathcal{X}$ by the optimality of $\x^{t_j+1}$. Likewise, the optimality of $\w^{t_j+1}$ yields
    \begin{align}\label{thm_c_wdesc}
    \mathcal{L}(\x^{t_j+1}, \w^{t_j+1}) + \gamma P(\w^{t_j+1}) + &\frac{c_{\w}}{2} \| \w^{t_j+1} - \w^{t_j} \|_2^2  
    = \widehat{G}_{\x^{t_j+1}}(\w^{t_j+1}; \w^{t_j}) \\
    &\hspace{-7em}\leq \widehat{G}_{\x^{t_j+1}}(\w; \w^{t_j})
    =  \mathcal{L}(\x^{t_j+1}, \w) + \gamma P(\w) + \frac{c_{\w}}{2} \| \w - \w^{t_j} \|_2^2, \nonumber
    \end{align}   
    for any $\w \in \mathcal{W}$. To combine \eqref{thm_c_xdesc} and \eqref{thm_c_wdesc}, we add $\gamma P(\w^{t_j})$ to both sides of \eqref{thm_c_xdesc}, and add $\lambda R(\x^{t_j+1})$ to both sides and choose $\w = \w^{t_j}$ in \eqref{thm_c_wdesc}. This results in
    \begin{align*}
        F(\x^{t_j+1}, \w^{t_j+1}) + \frac{c_{\x}}{2} \| \x^{t_j+1} - \x^{t_j} \|_2^2 + \frac{c_{\w}}{2} \| \w^{t_j+1} - & \w^{t_j} \|_2^2 
        \leq 
        F(\x, \w^{t_j}) + \frac{c_{\x}}{2} \| \x - \x^{t_j} \|_2^2.
    \end{align*}
    By taking the limit $t_j \rightarrow \infty$ and applying parts \ref{thm:a} and \ref{thm:b} of the theorem, we have
    \begin{align*}
        F(\x^*, \w^*) \leq F(\x, \w^*) + \frac{c_{\x}}{2} \| \x - \x^* \|_2^2, \quad \forall \x \in \mathcal{X},
    \end{align*}
    indicating that $\h x^*$ is the optimal solution of the problem $\min \limits_{\h x \in \mathcal{X}} \tilde{F}_{\h w^*}(\h x) \triangleq F(\h x, \h w^*) + \tfrac{c_\h x}{2}\|\h x-\h x^*\|_2^2$. Therefore, we have 
    \begin{align} \label{ineq:x_star}
        0 \leq \tilde{F}_{\h w^*}' (\h x^*; \h x - \h x^*) = F_{\w^*}'(\h x^*; \h x - \h x^*) \quad \forall \h x \in \mathcal{X},
    \end{align}
    where the inequality is given by the optimality of $\x^*$ and the equality holds by the definition of $\tilde{F}_{\h w^*}$. The same argument holds for $\h w^*$, and we have 
    \begin{align} \label{ineq:w_star}
    0 \leq \tilde{F}_{\h x^*}' (\h w^*; \h w - \h w^*) = F_{\x^*}'(\h w^*; \h w - \h w^*) \quad \forall \h w \in \mathcal{W},
    \end{align}
    where $\tilde{F}_{\h x^*}(\h w) \triangleq F(\h x^*, \h w) + \tfrac{c_\h w}{2}\| \h w-\h w^*\|_2^2$. Combining \eqref{ineq:x_star} and \eqref{ineq:w_star} yields 
    \begin{align*}
    0 \leq F_{\w^*}'(\h x^*; \h x - \h x^*) + F_{\x^*}'(\h w^*; \h w - \h w^*).
    \end{align*}
    By Lemma~\ref{lem:d_deriv}, the above shows that $(\x^*, \w^*)$ is a directional stationary solution of \eqref{prob:mainprob}.
\end{enumerate}
\end{proof}

\section{Regression} \label{app:reg}
We report the details of the linear regression experiment shown in Section~\ref{sec:reg_experiments}. Section~\ref{app:reg-data} discusses the data generation, including the vertical and interior clustered contamination cases. Our algorithm and subproblem details are in Section~\ref{app:reg-sub}, while the comparison methods, hyperparameter tuning process for each model, computation environment and time are reported in Section~\ref{app: reg-implementation}. 

\subsection{Data Generation} \label{app:reg-data}
Recall that \(\mathcal{I}_c \subseteq \{1,\ldots,n\}\) denotes the set of contaminated indices. Without contamination, the data matrix $A \in \mathbb{R}^{n \times d}$ and response $ \h b \in \mathbb{R}^n$ are generated following \eqref{data_generation:linear}, which is
\begin{align*}
    \h b = A\x^{ *}+\bm{\varepsilon}
    \quad \text{ where } \quad
    \bm{\varepsilon} \sim \mathcal{N}(0,I_{n}).
\end{align*}
We describe the data-generation process for interior clustered and vertical contamination below.
\begin{enumerate}[label=(\alph*)]
    \item \text{Vertical contamination}: The response vector $\h b$ is  generated using 
    \[
    \varepsilon_i \sim
    \begin{cases}
    \mathcal{N}(0,1), & i \notin \mathcal{I}_c,\\
    \mathcal{N}(\tau,1), & i \in \mathcal{I}_c,
    \end{cases}\]
    where \(\tau \in \{5,8\}\). The design matrix \(A\) remains unchanged. 
    
    \item \text{Interior clustered contamination}:
    Let \(\h c^{(1)}, \h c^{(2)} \in \mathbb{R}^d\) denote the componentwise medians of the first and the last three observations, respectively,
    \[
    c^{(1)}_j := \operatorname{median}\{A_{1j},A_{2j},A_{3j}\},
    \
    c^{(2)}_j := \operatorname{median}\{A_{n-2,j},A_{n-1,j},A_{nj}\},
    \ \forall j \in \{1,\ldots,d\}.
    \]

    For the two cloud setting, we partition \(\mathcal{I}_c\) into two equally sized disjoint subsets \(\mathcal{I}_c^{(1)}\) and \(\mathcal{I}_c^{(2)}\). The entries of $A$ are generated as  
    \[  A_{ij}\sim  
    \begin{cases}  
    \mathrm{Unif}\!\left(c_j^{(1)},\,c_j^{(1)}+0.25\right),  & i\in\mathcal{I}_c^{(1)}, \\
    \mathrm{Unif}\!\left(c_j^{(2)},\,c_j^{(2)}+0.25\right),  & i\in\mathcal{I}_c^{(2)}, \end{cases}  
    \]  
    with corresponding noise  
    \[  \epsilon_i\sim  
    \begin{cases}  
    \mathcal{N}(\tau,1),  & i\in\mathcal{I}_c^{(1)},\\  
    \mathcal{N}(-\tau,1),  & i\in\mathcal{I}_c^{(2)},  \end{cases}  
    \]
    in the response.
    The single-cloud setting is obtained by setting $\mathcal{I}_c^{(1)}=\mathcal{I}_c$ and $\mathcal{I}_c^{(2)}=\emptyset$.
\end{enumerate}

\subsection{Details of the Algorithm}
\label{app:reg-sub}
We use a modified version of the minimax concave penalty (MCP) introduced in \cite{zhang2010nearly}, which is defined in a scalar variable $w \in \mathbb{R}$ as
\begin{equation}
\label{eq:rho-mcp-reg}
\rho^{\rm mcp}(w)
=
\begin{cases}
\displaystyle
\theta |w-1|
-\frac{(w-1)^2}{2\eta},
& |w-1|\leq \eta\theta,\\[8pt]
\displaystyle
\frac{\eta\theta^2}{2},
& |w-1|>\eta\theta.
\end{cases}
\end{equation}
The MCP has two parameters \(\theta>0\) and \(\eta>1\).
Choosing the MCP as $\rho$ in \eqref{prob:mainreg}, the proposed formulation  is written as
\begin{equation}
\label{eq:reg-objective-mcp}
F^{\rm reg}(\h x,\h w)
=
\frac{1}{2}
\left\|
\h w^{1/2}\odot(\h b-A\h x)
\right\|_2^2
+
\gamma\sum_{i=1}^{n}\rho^{\rm mcp}(w_i).
\end{equation}
To solve \eqref{eq:reg-objective-mcp}, we apply our algorithm \eqref{alg:subproblems} while 
using surrogates \eqref{MM:wc} and \eqref{MM:wc_W} for $\x$ and $\w$ subproblems. The algorithm is summarized in Algorithm~\ref{alg:awcs-lasso-sum}.
\begin{algorithm}[H]
\small
\caption{Proposed Method for Solving \eqref{eq:reg-objective-mcp}}
\label{alg:awcs-lasso-sum}
\begin{algorithmic}[1]
  \REQUIRE \(A\in\mathbb{R}^{n\times d},\, \h b\in\mathbb{R}^{n},\, \gamma>0,\,
  c_\h x>0,\, c_\h w>0,\, \varepsilon>0,\, T_{\max}\)
  \STATE Initialize \(\x^{0}\in\mathbb{R}^{d}\), \(\w^{0}\in\mathbb{R}^{n}_+\) 
    \STATE \(F^{\rm reg,0}= F^{\rm reg}\bigl(\h x^{0},\h w^{0}\bigr),\;t\gets0\)  \WHILE{\(\tfrac{\lvert F^{\rm reg,t}-F^{\rm reg,t-1}\rvert}{\lvert F^{\rm reg,t}\rvert}>\varepsilon\) and \(t<T_{\max}\)}
    \vspace{3pt}
    \STATE Solve the following subproblems:
    \begin{subequations}\label{eq:awcs-lasso-sum-subprobs}
    \begin{align}
      \h x^{t+1}
      &= \argmin_{\h x\in\mathbb{R}^{d}}
      \Bigl[
        \tfrac12 \| (\h w^{t})^{1/2}\odot (\h b-A\h x)\|_2^2
        +\tfrac{c_\h x}{2}\|\h x-\h x^{t}\|_2^2
      \Bigr],
      \label{eq:awcs-lasso-sum-xsub}\\[6pt]
      \h w^{t+1}
      &= \argmin_{\substack{\h w\in\mathbb{R}_+^{n}}}
      \Bigl[
        \tfrac12 \| \h w^{1/2}\odot (\h b-A\h x^{t+1})\|_2^2
        +\gamma\sum_{i=1}^{n}\rho^{\rm mcp}(w_i)
        +\tfrac{c_\h w}{2}\|\h w-\h w^{t}\|_2^2
      \Bigr],
      \label{eq:awcs-lasso-sum-wsub}
    \end{align}
    \end{subequations}
    \STATE \(F^{\rm reg,t+1}\gets F^{\rm reg}\bigl(\h x^{t+1},\h w^{t+1}\bigr)\)
    \STATE \(t\gets t+1\)
  \ENDWHILE
  \STATE \textbf{return} \((\h x^{t},\h w^{t})\)
\end{algorithmic}
\end{algorithm}

\subsubsection{Closed-Form Solution for $\x$-subproblem \eqref{eq:awcs-lasso-sum-xsub}}
The $\x$ and $\w$ subproblems of Algorithm~\ref{alg:awcs-lasso-sum} admit closed-form solutions. We first derive a closed-form solution for the \(\h x\)-subproblem given in \eqref{eq:awcs-lasso-sum-xsub}.
For fixed observation weights \(\h w^t\), the regression \(\h x\)-subproblem in \eqref{eq:awcs-lasso-sum-xsub} reduces to a proximal weighted least-squares problem. Let \(W^t\in\mathbb{R}^{n\times n}\) denote the diagonal matrix whose
diagonal entries are given by \(\h w^t\). The objective in \eqref{eq:awcs-lasso-sum-xsub} is a convex quadratic function of \(\h x\). Hence, its minimizer is obtained by the first-order optimality condition
\[
A^{\mathsf T}W^t(A\h x^{t+1}-\h b)
+
c_{\h x}(\h x^{t+1}-\h x^t)
=0,
\]
therefore, the \(\h x\)-update admits the closed-form representation,
\[
\h x^{t+1}
=
\left(A^{\mathsf T}W^tA+c_{\h x}I\right)^{-1}
\left(A^{\mathsf T}W^t\h b+c_{\h x}\h x^t\right).
\]

\subsubsection{Closed-Form Solution for $\w$-subproblem \eqref{eq:awcs-lasso-sum-wsub}}
Here, we derive a closed-form update for the \(\h w\)-subproblem in \eqref{eq:awcs-lasso-sum-wsub}. Observe that \eqref{eq:awcs-lasso-sum-wsub} can be decomposed into univariate problems for each element of $\w$.
For an arbitrary coordinate \(i\in\{1,\ldots,n\}\), define
\begin{equation*}
r_i^{t+1}
=
\bigl((\h b-A\h x^{t+1})_i\bigr)^2 \quad
\text{ and } \quad
t
=
w_i^t-\frac{r_i^{t+1}}{2c_{\h w}}.
\end{equation*}
By substituting the above and removing constant terms, we obtain
the following problem for the \(i\)-th update of \(\h w\) in \eqref{eq:awcs-lasso-sum-wsub}:
\begin{equation}
\label{eq:appendix-scalar-subproblem}
\min_{w\ge 0}\;
\frac{c_{\h w}}{2}(w-t)^2
+\gamma\rho^{\rm mcp}(w),
\end{equation}
where the notation $w$ replaces $w_i$. We verify that the $\rho^{\rm mcp}$ defined in \eqref{eq:rho-mcp-reg} is $\frac{1}{\eta}$-weakly convex. Consequently, when
\begin{equation} \label{eq:curvature-cond}
 c_\h w > \frac{\gamma}{\eta},
\end{equation}
the problem \eqref{eq:appendix-scalar-subproblem} is strongly convex.

Under condition \eqref{eq:curvature-cond}, the unique minimizer of \eqref{eq:appendix-scalar-subproblem} can be characterized by partitioning the feasible set according to the breakpoints of \(\rho^{\rm mcp}\). From \eqref{eq:rho-mcp-reg}, the transition between the quadratic and constant branches occurs when
\(
|w-1|=\eta\theta,
\)
giving the breakpoints \(1-\eta\theta\) and \(1+\eta\theta\). In addition, \(w=1\) is a point of nondifferentiability due to the term \(|w-1|\). Hence, the relevant breakpoints are
\(
1-\eta\theta,\ 1,\text{ and } 1+\eta\theta.
\) If \(\eta\theta<1\), all three
breakpoints lie in the feasible region \(w\ge0\). If \(\eta\theta\ge1\), then \(1-\eta\theta\le0\), and the lower constant branch does not occur in the interior of the feasible set. Accordingly, the minimizer is obtained by examining the first-order conditions on each smooth region together with the boundary and nondifferentiable points \(w=0\) and \(w=1\).

\noindent\textbf{Zone (0): \(w=0\).}

Since \(\phi\) is strongly convex under \eqref{eq:curvature-cond}, the boundary point \(w=0\) is optimal whenever
\(
\phi'(0^+)\ge0.
\)
Indeed, in this case the objective cannot decrease as \(w\) moves from the boundary into the feasible region \(w>0\). At \(w=0\) the applicable branch of \(\rho^{\rm mcp}\) depends on the value of \(\eta\theta\).
\begin{itemize}
\item If \(\eta\theta\ge1\), then \(w=0\) lies in the quadratic branch.
Hence,
\(
\phi'(0^+)
=
-c_{\h w}t
+
\gamma\left(\frac{1}{\eta}-\theta\right).
\)
Therefore, if
\(
t
\le
\frac{\gamma}{c_{\h w}}
\left(\frac{1}{\eta}-\theta\right),
\)
then \(\phi'(0^+)\ge0\), and consequently
\(
w^*=0.
\)

\item If \(\eta\theta<1\), then \(w=0\) lies in the constant branch, and
\(
\phi'(0^+)=-c_{\h w}t.
\)
Thus, if \(t\le0\), then \(\phi'(0^+)\ge0\), and consequently
\(
w^*=0.
\)
\end{itemize}
\noindent\textbf{Zone (I): \(0<w\le1-\eta\theta\).}

This zone exists only when \(\eta\theta<1\). On this interval,
\(\rho^{\rm mcp}\) is constant.
Therefore,
\(
\phi'(w)=c_{\h w}(w-t),
\)
and the stationary point is \(w=t\). If
\(
0<t\le1-\eta\theta,
\)
then this stationary point lies in Zone (I), and strong convexity of
\(\phi\) implies that
\(
w^*=t.
\)
If \(t\le0\), then the stationary point lies outside the feasible set and,
as shown in Zone (0), \(\phi'(0^+)\ge0\), so the minimizer is \(w^*=0\).
If \(t>1-\eta\theta\), then
\(
\phi'(w)<0
\
\text{for all }0<w\le1-\eta\theta,
\) so \(\phi\) is decreasing throughout Zone (I). Hence, the minimizer cannot
occur in Zone (I), and we proceed to Zone (II).

\noindent\textbf{Zone (II): \(1-\eta\theta<w<1\).}

On this interval, the first-order condition is
\(
\phi'(w)
=
c_{\h w}(w-t)
-\gamma\theta
+\frac{\gamma}{\eta}(1-w)
=0,
\)
which yields the stationary point
\[
w^*
=
1-
\frac{c_{\h w}(1-t)-\gamma\theta}
{c_{\h w}-\gamma/\eta}.
\]
\begin{itemize}
    \item  If \(\eta\theta<1\), Zone (II) is bounded below by
\(1-\eta\theta>0\). The condition
\(
1-\eta\theta<w^*<1
\)
is equivalent to
\(
1-\eta\theta
<
t
<
1-\frac{\gamma\theta}{c_{\h w}}.
\)
Hence, for \(t\) in this interval, the stationary point lies in Zone (II)
and, by strong convexity, is the unique minimizer.

 \item If \(\eta\theta\ge1\), the lower breakpoint satisfies
\(1-\eta\theta\le0\), so the feasible portion of Zone (II) is
\(0<w<1\). In this case, the condition
\(
0<w^*<1
\)
is equivalent to
\(
\frac{\gamma}{c_{\h w}}
\left(\frac{1}{\eta}-\theta\right)
<
t
<
1-\frac{\gamma\theta}{c_{\h w}}.
\)
At the upper endpoint, \(t=1-\gamma\theta/c_{\h w}\), the stationary point reaches \(w=1\), which is considered in Zone (III).
\end{itemize}

\noindent\textbf{Zone (III): \(w=1\).}

The point \(w=1\) is nondifferentiable because of the term \(|w-1|\). Therefore, optimality is characterized by
\(
0\in\partial\phi(1),
\)
or, equivalently,
\(
\phi'(1^-)\le0\le\phi'(1^+).
\)
The one-sided derivatives are
\begin{align*}
\phi'(1^-)
&=
c_{\h w}(1-t)-\gamma\theta,
\qquad
\phi'(1^+)
=
c_{\h w}(1-t)+\gamma\theta.
\end{align*}
Hence, \(w=1\) is optimal when
\(
1-\frac{\gamma\theta}{c_{\h w}}
\le t\le
1+\frac{\gamma\theta}{c_{\h w}}.
\)
If
\(
t>1+\gamma\theta/c_{\h w},
\)
then \(\phi'(1^+)<0\), so the objective continues to decrease to the right of \(w=1\), and we proceed to Zone (IV).

\noindent\textbf{Zone (IV): \(1<w<1+\eta\theta\).}

On this interval, the first-order condition becomes
\(
\phi'(w)
=
c_{\h w}(w-t)
+\gamma\theta
-\frac{\gamma}{\eta}(w-1)
=0,
\)
which yields
\(
w^*
=
1+
\frac{c_{\h w}(t-1)-\gamma\theta}
{c_{\h w}-\gamma/\eta}.
\)
For this stationary point to lie in Zone (IV), it must satisfy
\(
1<w^*<1+\eta\theta.
\)
Since \(c_{\h w}-\gamma/\eta>0\) under \eqref{eq:curvature-cond}, these inequalities are equivalent to
\(
1+\frac{\gamma\theta}{c_{\h w}}
<t
<
1+\eta\theta.
\)

If \(t\ge1+\eta\theta\), the stationary point is at or beyond the upper breakpoint \(1+\eta\theta\), hence the minimizer cannot lie in the interior of Zone (IV), and we proceed to Zone (V). 
\noindent\textbf{Zone (V): \(w\ge1+\eta\theta\).} On this interval,
\(
\phi'(w)
=
c_{\h w}(w-t).
\)
Thus, the stationary point is \(w=t\), which lies in Zone (V) when
\(
t\ge1+\eta\theta.
\)

Combining the above cases, under condition \eqref{eq:curvature-cond}, the unique minimizer of \eqref{eq:appendix-scalar-subproblem} is

\begin{equation}
\label{eq:mcp-closed-form}
w_i^{t+1} =
\begin{cases}
0,
& \eta\theta<1 \ \text{and}\ t\le 0,
\\[8pt]
0,
& \eta\theta\ge 1 \ \text{and}\ t\le \dfrac{\gamma}{c_\h w}\!\left(\dfrac{1}{\eta}-\theta\right),
\\[10pt]
t,
& \eta\theta<1\ \text{and}\ 0< t \le 1-\eta\theta,
\\[10pt]
\displaystyle
1-\frac{c_\h w(1-t)-\gamma\theta}{c_\h w-\gamma/\eta},
& \eta\theta<1 \ \text{and}\ 1-\eta\theta<t<1-\dfrac{\gamma\theta}{c_\h w},
\\[14pt]
\displaystyle
1-\frac{c_\h w(1-t)-\gamma\theta}{c_\h w-\gamma/\eta},
& \eta\theta\ge 1 \ \text{and}\ \dfrac{\gamma}{c_\h w}\!\left(\dfrac{1}{\eta}-\theta\right)<t<1-\dfrac{\gamma\theta}{c_\h w},
\\[14pt]
1,
& 1-\tfrac{\gamma\theta}{c_\h w}\ \le\ t\ \le\ 1+\tfrac{\gamma\theta}{c_\h w},
\\[10pt]
\displaystyle
1+\frac{c_\h w(t-1)-\gamma\theta}{\,c_\h w-\gamma/\eta\,},
& 1+\tfrac{\gamma\theta}{c_\h w}\ <\ t\ <\ 1+\eta\theta,
\\[14pt]
t,
& t \ge 1+\eta\theta.
\end{cases}
\end{equation}

\subsection{Computational Details}
\label{app: reg-implementation}
\subsubsection{Comparison Methods}
\label{app:reg-comparison}
We consider five benchmark methods in Section~\ref{sec:reg_experiments}: least squares, least absolute deviation (LAD), Huber regression \cite{Huber1964}, least trimmed squares (LTS) \cite{Rousseeuw1984, Rousseeuw1987}, and \(\Theta\)-IPOD \cite{She2011}. Given a data matrix $A \in \mathbb{R}^{n \times d}$ and response vector $\h b \in \mathbb{R}^{n}$, 
\begin{itemize}
    \item Least squares solves the problem
    \begin{equation*}
        \min_{\h x \in \mathbb{R}^d} \;\; \|\h b-A\h x\|_2^2;
    \end{equation*}

    \item LAD solves 
    \begin{equation*}
    \min_{{\mathbf x}\in\mathbb{R}^d} \|{\mathbf b}  - A{\mathbf x}\|_1;
    \end{equation*}

    \item Given parameters $\delta, \alpha_H > 0$, the Huber regression solves
    \begin{equation*}
        \min_{\h x\in\mathbb{R}^d}
\sum_{i=1}^{n}\ell_{\delta}\!\left( r_i (\x) \right)
+\frac{\alpha_H}{2}\|\h x\|_2^2
\qquad \text{where} \qquad
\ell_{\delta}(r)
=
\begin{cases}
\dfrac{1}{2}r^2, & |r|\le \delta,\\[3pt]
\delta |r|-\dfrac{1}{2}\delta^2, & |r|>\delta
\end{cases}
    \end{equation*}
\end{itemize}
and $r_i (\x) \triangleq (\h b - A \x)_i$ denotes the residual of the $i$-th observation.
We note that the above methods use all observations in $A$ and $\h b$. The LTS and \(\Theta\)-IPOD are designed to handle outliers in the data. We summarize the two methods below.
\begin{itemize}
    \item LTS sorts the residuals given by a model $\x$ such that $r_{[1]}^2 (\x) \leq r_{[i]}^2  (\x) \leq r_{[n]}^2 (\x)$. For a given positive scalar $h < n$, LTS solves the problem 
    \begin{equation*}
        \min_{\h x\in\mathbb{R}^d}
\sum_{i=1}^{h} r_{[i]}^2(\h x).
    \end{equation*}
    We compute the solution of LTS using the FAST-LTS algorithm \cite{rousseeuw2006computing}, which iteratively chooses subsets of \(h\) observations.
    \item Finally, the hard-thresholding version of \(\Theta\)-IPOD, which introduces an observation-specific mean-shift parameter $\nu_i$ to model vertical outliers, solves 
    \begin{equation*}
\min_{\h x,\,\h \nu}
\quad
\frac{1}{2}\|A\h x+\h \nu-\h b\|_2^2
+
\sum_{i=1}^n p_{\tilde{\theta}}(\nu_i),
\qquad
p_{\tilde{\theta}}(\nu_i)
=
    \begin{cases}
\tilde{\theta}|\nu_i|-\dfrac{\nu_i^2}{2},
& |\nu_i|<\tilde{\theta},\\[4pt]
\dfrac{\tilde{\theta}^2}{2},
& |\nu_i|\geq\tilde{\theta}.
    \end{cases}
    \end{equation*}
    In our experiments, observation \(i\) is classified as an outlier whenever \(\widehat{\nu}_i\neq 0, \) and as a non outlier otherwise. This rule is used to compute the corresponding detection precision, recall, and F1-score. Accordingly, we define the set of detected outliers as 
    $\widehat{\mathcal I}_c \triangleq \left\{ i\in\{1,\ldots,n\} : \widehat{\nu}_i\neq0 \right\}$.  
    The proportion of observations retained is  
    $\mathrm{Retained} = \frac{100}{n} \left| \left\{ i\in\{1,\ldots,n\} : \widehat{\nu}_i=0 \right\} \right|$. 
\end{itemize}

\subsubsection{Hyperparameter Selection}
\label{app:reg-tuning}
All tuning parameters are selected independently for each replication, using the leave-one-out cross validation (LOOCV) method.
For our method, LOOCV selects the regularization parameter \(\gamma\) that yields the minimum median MSE, while the MCP parameters are fixed at \(\theta=1\) and \(\eta=3\). Following \cite{tay2023elastic}, we generate \(L=300\) candidate values for $\gamma$ using a
logarithmic scale,
\[
\gamma_j
=
\gamma_{\max}
\left(
\frac{\gamma_{\min}}{\gamma_{\max}}
\right)^{\frac{j-1}{L-1}},
\qquad j=1,\ldots,L,
\]
where $\gamma_{\min}$ and $\gamma_{\max}$ are prespecified bounds.

The benchmark methods least squares and LAD do not require tuning. We use the library \texttt{scikit-learn}'s default parameters \(\delta=1.35\) and \(\alpha_H=10^{-4}\) for Huber regression. Following the suggestion in \cite{vcivzek2000least}, we use $h = \lfloor 0.75\, n \rfloor$ for LTS. 
For \(\Theta\)-IPOD, we tune the threshold parameter \(\tilde{\theta}\) over the fixed grid $
\tilde{\theta} \in \{1.5,\,2.0,\,2.5,\,3.0,\,3.5\}$ using the LOOCV median MSE criterion.

\subsubsection{Implementation}

Experiments were implemented in Python using \texttt{NumPy} \cite{harris2020array} and \texttt{scikit-learn}.
Table~\ref{tab:reg_runtime} reports the training wall-clock time for a representative vertical-contamination setting in Section~\ref{sec:reg-results}, with \(\alpha=0.3\) and \(\tau=5\). 

\begin{table}[!ht]
\centering
\scriptsize
\caption{Training wall-clock time (seconds) for the vertical-contamination setting with $\alpha=0.3$ and $\tau=5$. Reported values are the mean (and standard deviation) over 200 independent
replications.}
\label{tab:reg_runtime}
\begin{tabular}{lc}
\toprule
Method & Training Time (s) \\
\midrule
SWAPE & 0.0025 (0.0009) \\
$\Theta$-IPOD & 0.0045 (0.0049) \\
Least Squares & 0.0002 (0.0001) \\
LAD & 0.7347 (0.0144) \\
Huber & 0.0116 (0.0012) \\
LTS & 0.0069 (0.0011) \\
\bottomrule
\end{tabular}
\end{table}

\section{Binary Classification} \label{app:binary}
We report the details of the binary classification experiment in Section~\ref{sec:nn_experiments}. Following the neural network structure and resulting formulation in Section~\ref{app:nn_derivation}, our algorithm and its subproblem derivations are presented in Section~\ref{app:nn_subprob_deriv}. The details of comparison methods, hyperparameter tuning process, computation environment and time are reported in Section~\ref{app:nn-comp-details}.

\subsection{Details of the Neural Network in the Binary Classification Formulation} \label{app:nn_derivation}
We derive the problem \eqref{prob:mainnn} using a fully-connected feed-forward neural network.
Let \(A\in\mathbb{R}^{n\times q}\) denote the feature matrix, with \(\h a_i\in\mathbb{R}^q\) representing the feature vector of observation \(i \in \{ 1, \dots, n\}\), and let \({\h b}\in\{0,1\}^n\) the vector of observed binary labels, and \(\w\in\mathbb{R}_+^n\) the observation weight vector. We consider a neural network with two hidden layers of widths \(m_1\) and \(m_2\). Let \(\x=\bigl(V_1,\h s_1,V_2,\h s_2,\h v_3,s_3\bigr)\in\mathbb{R}^{d}\) collect all network parameters, where \( V_1\in\mathbb{R}^{m_1\times q}, V_2\in\mathbb{R}^{m_2\times m_1},\text{ and }\h v_3\in\mathbb{R}^{m_2},\) are the network weights, while \(\h s_1\in\mathbb{R}^{m_1}, \;\h s_2\in\mathbb{R}^{m_2}, \;s_3\in\mathbb{R},\) are the corresponding bias parameters, and \(d=m_1q+m_1m_2+m_1+2m_2+1\). For observation \(i\), the hidden-layer representations are
\[
\h h_i^{(1)}
=
\phi_\beta(V_1\h a_i+\h s_1),
\qquad
\h h_i^{(2)}
=
\phi_\beta(V_2\h h_i^{(1)}+\h s_2),
\] where \(\phi_\beta(u) =  \beta^{-1}\log(1+e^{\beta u})\) is the softplus activation function, applied componentwise to vector inputs. We consider the weighted binary cross-entropy loss
\begin{align} \label{app:nn_loss_term}
\mathcal{L}(\x,\w)
=
-\sum_{i=1}^{n} w_i
\left[
 b_i\log   p_i(\x)
+
(1- b_i)
\log\!\left(1-  p_i(\x)\right)
\right].
\end{align}
where \( p_i(\x)=\sigma\!\left(\h v_3^{\mathsf T}\h h_i^{(2)}+s_3\right)\) is the fitted class-one probability, and \(\sigma(u)=(1+e^{-u})^{-1}\) is the logistic function. By applying loss \eqref{app:nn_loss_term} to our main problem \eqref{prob:mainprob}, we obtain the training problem \eqref{prob:mainnn}, which is
\begin{align} \label{app:nn_prob}
\minimize_{\x\in\mathbb{R}^d,\;\w\in\mathbb{R}_+^n}\
F^{\rm NN}(\x,\w)
={}&
-\sum_{i=1}^{n} w_i
\left[
 b_i\log  p_i(\x)
+
(1- b_i)
\log\!\left(1- p_i(\x)\right)
\right]\notag\\
&\hspace{11em}
+\lambda R(\x)
+\gamma\sum_{i=1}^{n}\rho^{\rm lasso}(w_i).
\end{align}
Here, we use a horizontally shifted 
LASSO penalty defined in a scalar variable $w \in \mathbb{R}$ as 
\begin{align} \label{def:shifted_lasso} 
    \rho^{\rm lasso}(w) =|1-w|.
\end{align}
We implement the problem \eqref{app:nn_prob} without regularization on $\x$, i.e., $R(\x) = 0$, and with $\ell_2$-regularization on the network's edge weights, $R(\x) = \frac{1}{2} \left(\|V_1\|_F^2 + \|V_2\|_F^2 + \|\h v_3\|_2^2\right)$.

\subsection{Details of the Algorithm} \label{app:nn_subprob_deriv}
We apply the proposed method \eqref{alg:subproblems} to solve \eqref{app:nn_prob} using surrogates \eqref{MM:wc} and \eqref{MM:wc_W} for $\x$ and $\w$ subproblems, respectively. The resulting algorithm is summarized in Algorithm~\ref{alg:awcs-nn}. In this experiment, we use Adam \cite{kingma2015adam} to solve the $\x$-subproblem \eqref{eq:awcs-nn-xsub}, therefore, we only present the closed-form solution derivation for the $\w$-subproblem.
\begin{algorithm}[H]
\small
\caption{Proposed Method for Solving \eqref{app:nn_prob}}
\label{alg:awcs-nn}
\begin{algorithmic}[1]
  \REQUIRE \(\{\h\xi^i=(\h a_i, b_i)\}_{i=1}^{n},\,
  \lambda\ge0,\,\gamma>0,\,c_\h x>0,\,c_\h w>0,\,
  \varepsilon>0,\,T_{\max}\)
  \STATE Initialize \(\h x^{0}\in\mathbb{R}^d\) and
  \(\h w^{0}\in\mathbb{R}_+^{n}\)
  \STATE \(F^{{\rm NN},0}
  =F^{\rm NN}\bigl(\h x^{0},\h w^{0}\bigr), \; t\gets0\)

  \WHILE{\(\tfrac{
      \lvert F^{{\rm NN},t}-F^{{\rm NN},t-1}\rvert
    }{
      \lvert F^{{\rm NN},t}\rvert\
    }>\varepsilon\) and \(t<T_{\max}\)}
    \vspace{3pt}
    \STATE Solve the following subproblems:
    \begin{subequations}
    \label{eq:awcs-nn-subprobs}
    \begin{align}
    \h x^{t+1}
    &=
    \argmin_{\h x\in\mathbb{R}^d}
    \Biggl[
    -\sum_{i=1}^{n}w_i^t
    \left[
    b_i\log p_i(\h x)
    +
    (1-b_i)\log\!\left(1-p_i(\h x)\right)
    \right]\notag\\
&\hspace{12em}
    +\lambda R(\h x)
    +\frac{c_\h x}{2}
    \|\h x-\h x^t\|_2^2
    \Biggr],
    \label{eq:awcs-nn-xsub}
    \\[6pt]
    \h w^{t+1}
    &=
    \argmin_{\h w\in\mathbb{R}_+^{n}}
    \Biggl[
    -\sum_{i=1}^{n}w_i
    \left[
    b_i\log p_i(\h x^{t+1})
    +
    (1-b_i)\log\!\left(1-p_i(\h x^{t+1})\right)
    \right]\notag\\
    &\hspace{12em}
    +\gamma\sum_{i=1}^{n}\rho^{\rm lasso}(w_i)
    +\frac{c_\h w}{2}
    \|\h w-\h w^t\|_2^2
    \Biggr].
    \label{eq:awcs-nn-wsub}
    \end{align}
    \end{subequations}

    \STATE \(F^{{\rm NN},t+1}
    \gets F^{\rm NN}\bigl(\h x^{t+1},\h w^{t+1}\bigr)\)
    \STATE \(t\gets t+1\)
  \ENDWHILE

  \STATE \textbf{return} \((\h x^{t},\h w^{t})\)
\end{algorithmic}
\end{algorithm}

\subsubsection{Closed-Form Solution for $\w$-subproblem \eqref{eq:awcs-nn-wsub}}
Here, we derive a closed-form solution for the  $\w$-subproblem.
For fixed \(\h x^{t+1}\), define the loss associated with observation \(i\) by $\label{eq:nn-loss-fixed}
\ell_i^{t+1}
=
-
\left[
b_i\log p_i(\h x^{t+1})
+
(1-b_i)\log\!\left(1-p_i(\h x^{t+1})\right) \right]$.
The \(\h w\)-subproblem in \eqref{eq:awcs-nn-wsub} is separable across observations. For each \(i\in\{1,\ldots,n\}\), the update of \(w_i\) reduces
to a univariate problem
\begin{equation}
\label{eq:nn-w-scalar}
\min_{w\geq0} \
w\ell_i^{t+1}
+
\gamma|w-1|
+
\frac{c_\h w}{2}(w-w_i^t)^2,
\end{equation}
where the LASSO penalty in \eqref{def:shifted_lasso} is used.
Defining $z
=
w_i^t-\frac{\ell_i^{t+1}}{c_{\h w}}$ and 
completing the square, problem \eqref{eq:nn-w-scalar} reduces to solving a univariate problem
\begin{equation}
\label{eq:nn-lasso-reduced}
\min_{w\geq0} \, \phi(w) =
\frac{c_{\h w}}{2}(w-z)^2
+
\gamma|w-1|.
\end{equation}
The above problem is strongly convex and has a unique global minimizer. To derive a closed-form solution of \eqref{eq:nn-lasso-reduced}, we consider several intervals of the variable $w$ consisting of the end point \(w=0\) and the nondifferentiable point \(w=1\).

\medskip
\noindent\textbf{Zone (0): \(w=0\).} At the boundary \(w=0\), the right derivative is
\(
\phi'(0^+)
=
-c_{\h w}z-\gamma.
\)
The boundary point is optimal whenever
\(\phi'(0^+)\geq0\), which is equivalent to
\(
z\leq-\frac{\gamma}{c_{\h w}}.
\)

\medskip
\noindent\textbf{Zone (I): \(0<w<1\).} On this interval,
\(
|w-1|=1-w
\),
and hence
\(
\phi'(w)
=
c_{\h w}(w-z)-\gamma.
\)
The first-order condition \(\phi'(w)=0\) gives
\(
w
=
z+\frac{\gamma}{c_{\h w}}.
\)
For this stationary point to lie in \(0<w<1\), we require
\(
-\frac{\gamma}{c_{\h w}}<z<1-\frac{\gamma}{c_{\h w}}.
\)

\medskip
\noindent\textbf{Zone (II): \(w=1\).} The point \(w=1\) is nondifferentiable because of the term \(|w-1|\). Optimality is characterized by
\(
0\in\partial\phi(1),
\)
The one-sided derivatives are
\(
\phi'(1^-)
=
c_{\h w}(1-z)-\gamma,
\ \text{and }
\phi'(1^+)
=
c_{\h w}(1-z)+\gamma.
\)
Thus, \(w=1\) is optimal if and only if
\(
1-\frac{\gamma}{c_{\h w}}
\leq
z
\leq
1+\frac{\gamma}{c_{\h w}}.
\)

\medskip
\noindent\textbf{Zone (III): \(w>1\).}

On this interval,
\(
|w-1|=w-1
\),
so
\(
\phi'(w)
=
c_{\h w}(w-z)+\gamma.
\)
The first-order condition gives
\(
w
=
z-\frac{\gamma}{c_{\h w}}.
\)
For this stationary point to satisfy \(w>1\), we require
\(
z>1+\frac{\gamma}{c_{\h w}}.
\)

Combining the above cases, the unique minimizer of
\eqref{eq:nn-lasso-reduced} is
\begin{equation}
\label{eq:nn-w-closed-form}
w_i^{t+1}
=
\begin{cases}
0,
& z\leq-\dfrac{\gamma}{c_{\h w}},
\\[8pt]
\displaystyle
z+\dfrac{\gamma}{c_{\h w}},
&
-\dfrac{\gamma}{c_{\h w}}
<
z
<
1-\dfrac{\gamma}{c_{\h w}},
\\[10pt]
1,
&
1-\dfrac{\gamma}{c_{\h w}}
\leq
z
\leq
1+\dfrac{\gamma}{c_{\h w}},
\\[10pt]
\displaystyle
z-\dfrac{\gamma}{c_{\h w}},
&
z
>
1+\dfrac{\gamma}{c_{\h w}}.
\end{cases}
\end{equation}

\subsection{Computational Details} \label{app:nn-comp-details}

\subsubsection{Comparison Methods}
\label{app:nn-comparison}

Our experiment compares the proposed formulation \eqref{prob:mainnn} to a baseline method which solves the same problem only for $\x$ while fixing all weights to 1, i.e., $\w = \bm{1}_n$. For both models, we implement four variants: no regularization, $\ell_2$-regularization, dropout, and early stopping. The regularization-based variants are discussed in \ref{app:nn_derivation}. We describe dropout and early stopping methods below. 

\begin{itemize}
    \item Dropout is applied independently to the two hidden layers during training. Let \(\h m_i^{(\ell)}\in\{0,1\}^{m_\ell}\) denote the dropout mask for layer \(\ell\in\{1,2\}\), whose components satisfy
\(
m_{ik}^{(\ell)}\sim\operatorname{Bernoulli}(1-p_{\rm drop}),
\;
k=1,\ldots,m_\ell.
\)
The hidden-layer representations become 
\[
\widetilde{\h h}_i^{(1)}
=
\frac{\h m_i^{(1)}}{1-p_{\rm drop}}
\odot
\phi_\beta(V_1\h a_i+\h s_1),
\qquad
\widetilde{\h h}_i^{(2)}
=
\frac{\h m_i^{(2)}}{1-p_{\rm drop}}
\odot
\phi_\beta
\left(
V_2\widetilde{\h h}_i^{(1)}+\h s_2
\right).
\]
The corresponding class-one probability is
\(
p_i(\h x;\h m_i)
=
\sigma
\left(
\h v_3^{\mathsf T}\widetilde{\h h}_i^{(2)}+s_3
\right).
\)

\item Early stopping does not modify the training objective. Instead, a portion of the training data is held out as a validation set, and the validation loss is monitored during training. Training is terminated when the validation loss fails to decrease by at least a prescribed minimum improvement for a specified number of consecutive epochs, referred to as the patience. The validation split, patience, and minimum improvement values are specified in Appendix~\ref{app:nn-tuning}.
\end{itemize}

\subsubsection{Hyperparameter Selection}
\label{app:nn-tuning}

For each proposed network variant, the parameter
\(\gamma\) is selected separately within each replication using \(10\)-fold cross validation over the grid
\[
\gamma
\in
\{
0.005,0.01,0.02,0.05,0.10,0.20,0.30,0.50,
0.70,1.00,1.50,2.00
\}.
\]
For each candidate \(\gamma\), the model is fitted on nine folds and evaluated on the remaining fold. We compute the median validation accuracy across the ten folds for each candidate value of \(\gamma\), and select the value with the highest median accuracy. We then refit the proposed model on the full training set using the selected value of \(\gamma\).

We fix the remaining hyperparameters across replications. For the proposed
model, we set
\(
c_{\h x}=10^{-5},
\ \text{and }
c_{\h w}=0.5.
\)
For the \(\ell_2\)-regularized variant, \(\lambda=10^{-4}\). The dropout probability $p_{\rm drop}$ is \(0.20\), while early stopping uses a \(20\%\) validation split, a patience of \(30\) epochs, and a minimum improvement of \(10^{-4}\).

\subsubsection{Implementation}
\label{app:nn-implementation}

Both proposed and baseline models use a fully connected neural network. In our experiments, we use \(m_1=m_2=256\). The two hidden layers use the Softplus activation with \(\beta=5\), and the output layer uses the logistic function. Network parameters \(V_1\), \(V_2\), and \(\h v_3\) are initialized identically across the proposed and baseline models within each replication using He-normal initialization \cite{he2015delving}. The intercept parameters \(\h s_1\), \(\h s_2\), and \(s_3\) are initialized to zero.

The \(\h x\)-subproblem in Algorithm~\ref{alg:awcs-nn} is solved using Adam with learning rate \(10^{-3}\), mini-batch size \(64\), and at most \(1{,}000\) epochs. For the proposed model, the observation weights are initialized at \(\h w^0=\mathbf{1}_n\).
The wall-clock training times at \(\alpha=0.35\) are reported in Table~\ref{tab:nn_runtime_alpha035}.

\begin{table}[t!]
\centering
\caption{Wall-clock training time (seconds) for the neural network experiment
at \(\alpha=0.35\). Entries report the mean and standard deviation over
200 replications.}
\label{tab:nn_runtime_alpha035}
\scriptsize
\setlength{\tabcolsep}{8pt}
\renewcommand{\arraystretch}{1.12}
\begin{tabular}{lcc}
\toprule
Regularization & Baseline & SWAPE \\
\midrule
None & 20.82 (0.21) & 285.12 (82.02) \\
Early stopping & 1.04 (0.46) & 1.13 (0.52) \\
$\ell_2$ & 20.79 (0.25) & 277.23 (81.64) \\
Dropout & 22.97 (1.56) & 391.26 (129.18) \\
\bottomrule
\end{tabular}
\end{table}

\section{Recommender System} \label{app:rec}
We report the details of the recommender system experiment shown in Section~\ref{sec:rec_experiments}. Section~\ref{app:rec-data} discusses the data pre-processing we applied to the Netflix Prize data and the random noise generation. 
Our algorithm and subproblem details are in Section~\ref{app:rec-sub}, while benchmark methods, hyperparameter selection process, and other implementation details are shown in Section~\ref{app:rec-comp-details}. 

\subsection{Data Processing} \label{app:rec-data}
\subsubsection{Pre-processing Netflix Prize Data} \label{app:rec-datagen_netflix}
The Netflix dataset contains over 100 million ratings, from $480,189$ users on $17,770$ movies, where each user gives a rating between $1$ and $5$ for a movie they watched. To obtain a subset, we first removed movies with fewer than \(192\) ratings and users with fewer than \(39\) ratings. The pruning process eliminates users and movies with few ratings so that each retained user and movie has enough observed ratings to support train--test splitting and model evaluation. The remaining movies were partitioned into low-, medium-, and high-popularity strata containing \(4{,}341\), \(4{,}395\), and \(4{,}406\) movies, respectively. Likewise, users were divided into low-, medium-, and high-activity strata containing \(53{,}744\), \(10{,}550\), and \(182\) users. To preserve the original data structure while shrinking the data, we sampled \(2\%\), \(5\%\), and \(10\%\) of each stratum, respectively. Intersecting the sampled sets of users and movies yields a final matrix containing \(1{,}622\) users, \(747\) movies, and \(136{,}579\) ratings, with a density of \(11.27\%\).

\subsubsection{Generating Random Noise} \label{app:rec-datagen_contamination}

Let \( A^{\mathrm{tr}}\) denote the training matrix. For each observed entry \( A^{\mathrm{tr}}_{ij}\neq 0\), we either keep the value with probability $(1-\alpha)$ or assign a random rating from $\{1, \dots, 5\}$ such that the new value differs from the original value with probability $\alpha$. We use $\alpha = 0.1$ and $\alpha = 0.3$ in our experiment.

\subsection{Details of the Algorithm}
\label{app:rec-sub}

This section details the algorithm for solving the columnwise problem \eqref{eq:slim-colj-weighted}. For simplicity of notation, we use $\x$, $\w$, and $\h a$ to represent columns of $X \in \mathbb{R}^{d \times d}$, $W \in \mathbb{R}^{n \times d}$, and $A \in \mathbb{R}^{n \times d}$ matrices, without the column index (a superscript $j$ used in the manuscript). 
Applying $R(\x) = \| \x \|_1$ and $\rho^{\rm mcp}$ defined in \eqref{eq:rho-mcp-reg}, the problem \eqref{eq:slim-colj-weighted} is equivalent to
\begin{equation}
\label{eq:slim-colj-weighted2}
\begin{aligned}
\minimize \limits_{\h x \in \mathbb{R}^d_{+},\,\h w \in \mathbb{R}^n_{+}} 
F^{\rm RS}(\x, \w) =
\tfrac12\,
  \| \h w^{1/2} 
  \odot \bigl(\h a - A\h x \bigr)\|_{2}^{2} + 
  \lambda\, \| \x \|_1+ 
  \gamma\, \sum \limits_{i=1}^n \rho^{\rm mcp}(w_i) 
\text{ subject to }
x_{j}=0.
\end{aligned}
\end{equation}
Algorithm~\ref{alg:awcs-inline} presents our method for solving the columnwise problem \eqref{eq:slim-colj-weighted2}. Note that the $\w$-subproblem \eqref{eq:awcs-rec-wsub} has an identical structure to one used in the linear regression experiment, shown in \eqref{eq:awcs-lasso-sum-wsub}, hence omitted here. 

\begin{algorithm}[H]
\small
\caption{Proposed Framework for Solving \eqref{eq:slim-colj-weighted2}}
\label{alg:awcs-inline}
\begin{algorithmic}[1]
  \REQUIRE \(A\in\mathbb{R}^{n\times d},\,\lambda,\gamma>0,\,
  c_\h x,c_\h w>0,\,\varepsilon>0,\,T_{\max}\)
\STATE Initialize
    \(\h x^{0}\geq0\) with \(x_j=0\),
    and \(\h w^{0}\geq0\)
  \STATE
\(F^{{\rm RS},0}
=
F^{\rm RS}\bigl(\h x^{0},\h w^{0}\bigr)\),
\(t\gets0\)

  \WHILE{\(\tfrac{\lvert F^{{\rm RS},t}-F^{{\rm RS},t-1}\rvert}{\lvert F^{{\rm RS},t}\rvert}>\varepsilon\) and \(t<T_{\max}\)}
    \vspace{3pt}
    \STATE Solve the following subproblems:
\begin{subequations}
\label{eq:awcs-rec-subprobs}
\begin{align}
\h x^{t+1}
&=
\argmin_{\h x\in\mathbb{R}^{d}_{+};x_{j}=0}
\Biggl[
\left(
\h w^{t}
\odot
(A\h x^{t}-\h a)
\right)^{\mathsf T}
A
\left(\h x-\h x^{t}\right)
+
\lambda\|\h x\|_1
+
\frac{c_\h x}{2}
\left\|
\h x-\h x^{t}
\right\|_2^2
\Biggr],
\label{eq:awcs-rec-xsub}
\\[6pt]
\h w^{t+1}
&=
\argmin_{\h w\in\mathbb{R}_+^{n}}
\Biggl[
\frac12
\left\|
(\h w)^{1/2}
\odot
(\h a-A\h x^{t+1})
\right\|_2^2
+
\gamma
\sum_{i=1}^{n}
\rho^{\rm mcp}(w_i)
+
\frac{c_\h w}{2}
\left\|
\h w-\h w^{t}
\right\|_2^2
\Biggr].
\label{eq:awcs-rec-wsub}
\end{align}
\end{subequations}
    \STATE
\(F^{{\rm RS},t+1}
\gets
F^{\rm RS}
\bigl(\h x^{t+1},\h w^{t+1}\bigr)\)
    \STATE \(t\gets t+1\)

  \ENDWHILE
  \STATE \textbf{return} \((\h x^{t+1},\h w^{t+1})\)
\end{algorithmic}
\end{algorithm}
\subsubsection{Closed-Form Solution for $\x$-subproblem \eqref{eq:awcs-rec-xsub}}
Next, we derive a closed-form solution for the $\x$-subproblem. Defining
$\h z^{t}
=
\h x^{t}
-
\frac{1}{c_\h x}
A^{\mathsf T}
\left(
\h w^{t}
\odot
(A\h x^{t}-\h a)
\right)
$,
the $j$-th column problem in \eqref{eq:awcs-rec-xsub} is equivalent to
\begin{align}
\h x^{t+1}
=
\argmin_{\h x\in\mathbb{R}^{d}_{+}; \, x_{j}=0}
\frac{c_\h x}{2}
\bigl\|\h x-\h z^{t}\bigr\|_2^2
+
\lambda \|\h x\|_1.
\label{eq:prox-reduced-appendix}
\end{align}
Because \(\h x\ge 0\), the \(\ell_1\)-norm reduces to \(\mathbf{1}_d^\mathsf{T}\h x\), and \eqref{eq:prox-reduced-appendix}
separates componentwise. For each \(i\neq j\), where $i \in \{1, \dots, d\}$, we solve
\begin{align}
\min_{x_i\ge 0}
\frac{c_\h x}{2}
\bigl(x_i-z_i^{t}\bigr)^2
+\lambda x_i,
\label{eq:scalar-x-appendix}
\end{align}
whose unconstrained first-order condition gives
$
c_\h x
\bigl(x_i-z_i^{t}\bigr)
+\lambda=0.
$
Hence,
$
x_i
=
z_i^{t}
-\frac{\lambda}{c_\h x},
$
if this value is nonnegative, it is feasible and, by convexity of the subproblem, is the minimizer. Otherwise, the unconstrained minimizer lies below the feasible set \(x_i\ge0\). In this case, the derivative of the scalar objective at \(x_i=0\) is nonnegative, since the derivative of a convex function is nondecreasing, the objective is nondecreasing for all \(x_i\ge0\). Therefore, the minimum is attained at the boundary \(x_i=0\). Consequently,
\[
x_i^{t+1}
=
\max\!\left\{
z_i^{t}-\frac{\lambda}{c_\h x},
\,0
\right\},
\]
for all $i\neq j$ where $j$ is the index of the column
that minimizes the current objective function. Otherwise, $x^{t+1}_{j} = 0$.

\subsection{Computational Details} \label{app:rec-comp-details}

\subsubsection{Comparison Methods}
\label{app:rec-comparison}
We present five benchmark methods: singular value decomposition (SVD), User-based \(k\)-nearest neighbors (UserKNN), Item-based \(k\)-nearest neighbors (ItemKNN), SLIM, and SLIM-LTS, which is a method that applies the LTS in Section~\ref{app:reg-comparison} to the users of the data matrix.

\paragraph{Sparse Linear Methods for Recommendation (SLIM)}
Given the user--item interaction matrix \(A\in\mathbb{R}^{n\times d}\), the sparse linear method (SLIM) \cite{6137254} learns an item--item similarity matrix \(X\in\mathbb{R}^{d\times d}\) by solving\begin{align}
\min_{X\in\mathbb{R}^{d\times d}}
\quad &
\frac{1}{2}\|A-AX\|_F^2
+\frac{\beta}{2}\|X\|_F^2
+\lambda\|X\|_1
\label{eq:slim-matrix-app}\\
\text{s.t.}\quad &
\operatorname{diag}(X)=0,\qquad X\ge 0.
\nonumber
\end{align}
The zero-diagonal constraint prevents trivial self-reconstruction, while the nonnegativity constraint promotes additive item--item relations. Since the objective separates across columns, \eqref{eq:slim-matrix-app} decomposes into \(d\) independent subproblems. For each item \(j\in[d]\), the \(j\)-th column \(x\in\mathbb{R}^d\) is obtained from
\begin{align}
\min_{\h x\in\mathbb{R}^{d}}
\quad &
\frac{1}{2}\|\h a-A\h x\|_2^2
+\frac{\beta}{2}\|\h x\|_2^2
+\lambda\|\h x\|_1
\label{eq:slim-column-app}\\
\text{s.t.}\quad &
X_{jj}=0,\qquad x\ge0.
\nonumber
\end{align}
\paragraph{Robust SLIM via Least Trimmed Squares (SLIM--LTS)} 
To improve robustness against corrupted or adversarial ratings, we also consider an LTS-based variant of SLIM \eqref{eq:slim-column-app}. Let
$
\Omega_j:=\{\,i\in\{1,\ldots,n\}\mid A_{ij}\neq 0\,\}
$
denote the set of observed entries in column \(j\), and define the residual
$
r_{i}(\x^j):=(\h a^j -A\h x^j)_i.
$
Let \(h\) denote the trimming parameter, and let \(\mathcal{I}_j\subseteq \Omega_j\) denote the index set corresponding to the smallest fraction \(h\) of squared residuals. Then SLIM--LTS solves
\begin{align}
\min_{\x^j}
\quad &
\sum_{i\in \mathcal{I}_j} r_{i}^2(\h x^j)
+\frac{\beta}{2}\|\h x^j \|_2^2
+\lambda\|\h x^j \|_1
\label{eq:slim-lts-app}\\
\text{s.t.}\quad &
\mathcal{I}_j\subseteq \Omega_j,\quad |\mathcal{I}_j| = \lfloor h|\Omega_j| \rfloor,\quad
X_{jj}=0,\quad \x^j \ge0.
\nonumber
\end{align}
Only the \(h\) smallest squared residuals are retained in the objective, thereby limiting the influence of extreme outliers. The optimization is performed by a FAST--LTS style alternating scheme \cite{rousseeuw2006computing}.
\paragraph{Matrix Factorization (SVD)}
We also compare against a classical latent-factor model based on singular value decomposition \cite{koren2009matrix}. This approach learns user and item latent factors by solving
\begin{align}
\min_{\{P^i\in\mathbb{R}^{1\times k},\,Q^j\in\mathbb{R}^{k}\}}
\quad &
\sum_{i=1}^{n}\sum_{j=1}^{d}\bigl(A_{ij}-\langle P^i,Q^j\rangle\bigr)^2
+
\lambda\left(
\sum_{i=1}^{n}\|P^i\|_2^2
+
\sum_{j=1}^{d}\|Q^j\|_2^2
\right).
\label{eq:svd-app}
\end{align}
Here, \(P\in\mathbb{R}^{n\times k}\) and \(Q\in\mathbb{R}^{k\times d}\) denote the user and item latent factor matrices, respectively, and \(k\) is the latent dimension. The predicted preference score for user \(i\) and item \(j\) is given by
$
\hat A_{ij}=P^iQ^j,
$ and these scores are used to rank unseen items.

\paragraph{User \(k\)-Nearest Neighbors (UserKNN) and Item \(k\)-Nearest Neighbors (ItemKNN)} Finally, we consider both user-based and item-based \(K\)-nearest neighbor methods. These methods predict preferences using local similarity structure in the user--item interaction matrix \cite{sarwar2001item}.

In UserKNN, similarity between users \(i\) and \(u\) is measured by the Pearson correlation computed over their co-rated items:
\[
\operatorname{sim}(i,u)
=
\frac{
\sum_{j\in \Omega_i\cap \Omega_u}(A_{ij}-\bar A^i)(A_{uj}-\bar A^u)
}{
\sqrt{\sum_{j\in \Omega_i\cap \Omega_u}(A_{ij}-\bar A^i)^2}
\sqrt{\sum_{j\in \Omega_i\cap \Omega_u}(A_{uj}-\bar A^u)^2}
}.
\]
where $\Omega_i$ denotes the set of items rated by user $i$, and $\bar A^i$ is
the mean rating of user $i$ over $\Omega_i$.
Let \(\mathcal N_i^{(K)}\) denote the set of the \(K\) most similar users to \(i\). The predicted rating for user \(i\) on item \(j\) is
\[
\hat A_{ij}
=
\bar A^i
+
\frac{
\sum_{u\in\mathcal N_i^{(K)}} \operatorname{sim}(i,u)(A_{uj}-\bar A^u)
}{
\sum_{u\in\mathcal N_i^{(K)}} |\operatorname{sim}(i,u)|
}.
\]
ItemKNN is defined analogously, except that similarities are computed between items rather than users. Let \(\mathcal N_j^{(K)}\) denote the set of the \(K\) nearest neighbors of item \(j\). Then the predicted score for user \(i\) on item \(j\) is
\[
\hat A_{ij}
=
\bar a
+
\frac{
\sum_{\ell\in\mathcal N_j^{(K)}} \operatorname{sim}(j,\ell)(A_{i\ell}-\bar A^\ell)
}{
\sum_{\ell\in\mathcal N_j^{(K)}} |\operatorname{sim}(j,\ell)|
}.
\]
where $\bar a$ is the mean rating of item $j$.

\subsubsection{Hyperparameter Selection}
\label{app:rec-tuning}

For model selection, we used one randomly selected replication and applied five-fold cross-validation to its training portion using the same per-user item-holdout strategy.
For Algorithm~\ref{alg:awcs-inline}, we set the parameters as follows: 
$\gamma = 0.1$, $\eta = 2.0$, $c_{\h x} =600,000$, $c_{\h w} = 0.1$, $\varepsilon = 10^{-4}$, and $T_{\max} = 100$.  The remaining hyperparameters $\lambda$ and $\theta$ are chosen by cross-validation. The candidates are
\[
  \lambda \in \{0.001, 0.005, 0.01,\, 0.05,\,0.1,\,0.5,\,1\}, \quad
  \theta \in \{0.001, 0.005, 0.01,\,0.05,\,0.1,\,0.5,\,1\},
\]
yielding \(49\) parameter combinations. For SLIM, 
we tune the regularization parameters $(\lambda, \beta)$ over the grids
\[
\lambda \in \{0.001, 0.005, 0.01,\, 0.05,\, 0.1,\, 0.5,\, 1\}, \quad
\beta \in \{0.001, 0.005, 0.01,\, 0.05,\, 0.1,\, 0.5,\, 1\}.
\]
For SLIM-LTS, we use the same $(\lambda, \beta)$ values selected for SLIM, while fixing the trimming parameter $h=0.8$. For SVD, we tune the regularization parameter over the grid
$
\lambda \in \{0.0001, 0.001, 0.01, 0.1, 1.0\}.
$
For neighborhood-based methods (UserKNN and ItemKNN), we tune the number of neighbors as
$K \in \{10, 20, 30, 40, 60, 80\}.$
For each parameter pair, the mean and standard deviation of
HitRate@5 across folds were recorded, and the pair with the highest mean was selected for final training on the complete dataset. This process selected $\lambda = 0.001$ for SVD. For both UserKNN and ItemKNN, $K=10$ was selected.  Table~\ref{tab:hyper_netflix} summarizes the final hyperparameter settings used for the remaining method. Bold entries correspond to values selected through cross-validation, whereas all other parameters were fixed across experiments.
\begingroup
    \renewcommand{\bfseries}{\relax} 
\begin{table}[H]
\scriptsize
\centering
\caption{Hyperparameter settings for each recommendation method}
\label{tab:hyper_netflix}
\setlength{\tabcolsep}{6pt}
\begin{tabular}{l c c c c c c c c c}
\toprule
Method & $\lambda$ & $\theta$ & $\gamma$ & $\eta$ & $c_{\h{x}}$ & $c_{\h{w}}$ & $\beta$ & $h$ & $K$ \\
\midrule
SWAPE      & \textbf{0.05} & \textbf{0.5} & 0.1 & 2.0 & 600000 & 0.1 & -- & -- & -- \\
SLIM      & \textbf{0.5} & -- & -- & -- & -- & --& \textbf{1.0}  & -- & -- \\
SLIM-LTS & 0.5 & -- & -- & -- & -- & --& 1.0 & 0.8 & -- \\

\bottomrule
\end{tabular}
\end{table}
\endgroup

\subsubsection{Implementation}

SLIM and SLIM--LTS were solved using Gurobi Optimizer 12.0 through its Python interface \cite{gurobi}.
SVD, UserKNN, and ItemKNN were implemented using the \texttt{Surprise} library \cite{hug2020surprise}.

All runtimes are reported as wall-clock time and averaged over 30 replications under the first setting discussed in the Results section. For fairness, SLIM, SLIM--LTS, and the proposed method were executed using 128 CPU cores, while SVD and KNN relied on their internal optimized implementations. The proposed method leverages column-wise parallelization across cores. Table~\ref{tab:rec-runtime} reports the corresponding wall-clock training times. UserKNN and ItemKNN require substantially more time, due to the cost of computing and applying similarity measures across users or items. Runtime for the SLIM-based methods depends on the solver and implementation.

\begin{table}[H]
\centering
\scriptsize
\caption{Training wall-clock time (seconds) for the first
setting. Reported values are the mean
(standard deviation) over 30 independent replications.}
\label{tab:rec-runtime}
\begin{tabular}{lc}
\toprule
Method & Training Time \\
\midrule
SWAPE  & 1.47 (0.07) \\
SLIM      & 1512.63 (45.27) \\
SLIM--LTS & 2277.18 (67.54) \\
SVD       & 0.66 (0.02) \\
UserKNN   & 44.37 (0.88) \\
ItemKNN   & 29.23 (0.57) \\
\bottomrule
\end{tabular}
\end{table}

\subsection{Additional Results and Analysis}
\label{app:additional-recsys-results}
Figure~\ref{fig:recsys_prediction_scores} provides additional insight into the predicted scores from the first recommender system experiment. These results illustrate that strong ranking performance does not necessarily imply close agreement between predicted-score magnitudes and observed ratings.
\begin{figure}[H]
    \centering    \includegraphics[width=0.70\textwidth]{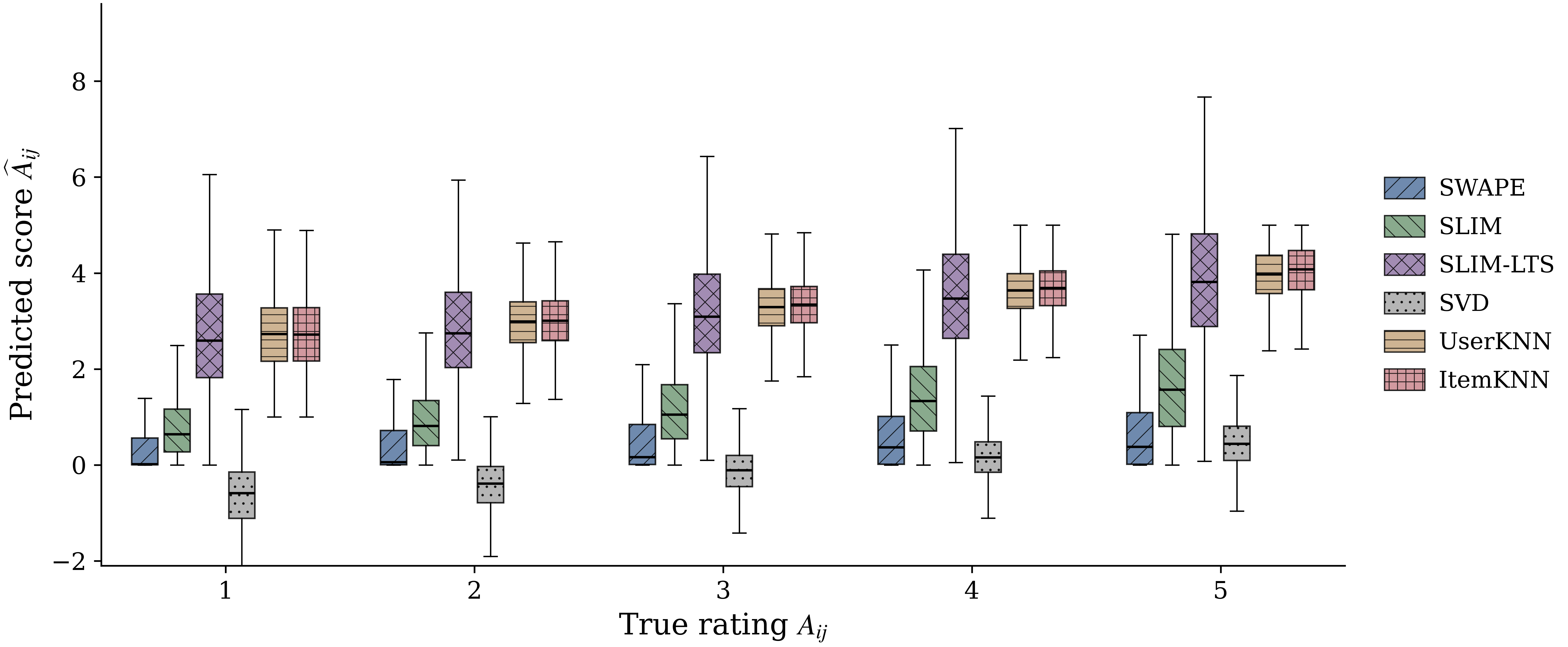}
    \caption{Distribution of the predicted scores $\widehat{A}_{ij}$ for a replication of the first recommender system experiment. SLIM generally produces scores below the corresponding true ratings $A_{ij}$, which is consistent with the sparsity-inducing regularization in its formulation that shrinks the estimated coefficients toward zero. SVD exhibits a similarly compressed score distribution. These results show that methods achieving relatively strong hit rate performance may still produce predicted scores whose magnitudes differ substantially from the observed ratings. For each true rating $A_{ij}\in\{1,\ldots,5\}$, each box summarizes the predicted scores for all observed test user-item pairs having that rating.}    \label{fig:recsys_prediction_scores}
\end{figure}

\end{document}